\newcommand\la{\leftarrow}
\newcommand\id{\mathrm{id}}
\newcommand\ten{\otimes}
\newcommand\hten{\hat{\otimes}}
\newcommand\CC{\mathrm{C}}
\renewcommand\H{\mathrm{H}}
\newcommand\z{\mathrm{Z}}
\newcommand\N{\mathbb{N}}
\newcommand\Z{\mathbb{Z}}
\newcommand\Q{\mathbb{Q}}
\newcommand\R{\mathbb{R}}
\newcommand\bL{\mathbb{L}}
\newcommand\bS{\mathbb{S}}
\newcommand\C{\mathcal{C}}
\newcommand\cC{\mathcal{C}}
\newcommand\cD{\mathcal{D}}
\newcommand\cL{\mathcal{L}}
\newcommand\cM{\mathcal{M}}
\newcommand\cP{\mathcal{P}}
\newcommand\cS{\mathcal{S}}
\newcommand\cT{\mathcal{T}}
\renewcommand\O{\mathscr{O}}
\newcommand\sC{\mathscr{C}}
\newcommand\sF{\mathscr{F}}
\newcommand\sH{\mathscr{H}}
\newcommand\sO{\mathscr{O}}
\newcommand\fX{\mathfrak{X}}
\newcommand\fY{\mathfrak{Y}}
\renewcommand\L{\Lambda}
\newcommand\g{\mathfrak{g}}
\newcommand\fr{\mathfrak{r}}
\renewcommand\hom{\mathscr{H}\!\mathit{om}}
\newcommand\cHom{\mathcal{H}\!\mathit{om}}
\newcommand\cDiff{\mathcal{D}\!\mathit{iff}}
\newcommand\Ho{\mathrm{Ho}}
\newcommand\Ass{\mathrm{Ass}}
\newcommand\Br{\mathrm{Br}}
\newcommand\Mod{\mathrm{Mod}}
\newcommand\Hom{\mathrm{Hom}}
\newcommand\map{\mathrm{map}}
\newcommand\HHom{\underline{\mathrm{Hom}}}
\newcommand\Ext{\mathrm{Ext}}
\newcommand\EExt{\mathbb{E}\mathrm{xt}}
\newcommand\cone{\mathrm{cone}}
\newcommand\cocone{\mathrm{cocone}}
\newcommand{\brh}{\llbracket \hbar \rrbracket}
\newcommand{\brhh}{\llbracket \hbar^2 \rrbracket}
\newcommand{\llb}{\llbracket}
\newcommand{\rrb}{\rrbracket}
\newcommand\coker{\mathrm{coker\,}}
\newcommand\loc{\mathrm{loc}}
\newcommand\Sp{\mathrm{Sp}}
\newcommand\PreSp{\mathrm{PreSp}}
\newcommand\Pol{\mathrm{Pol}}
\newcommand\cPol{\mathcal{P}ol}
\newcommand\poly{\mathrm{poly}}
\newcommand\Com{\mathrm{Com}}
\newcommand\Comp{\mathrm{Comp}}
\newcommand\nondeg{\mathrm{nondeg}}
\newcommand\ad{\mathrm{ad}}
\newcommand\Lim{\varprojlim}
\newcommand\LLim{\varinjlim}
\newcommand\ho{\mathrm{ho}\!}
\newcommand\into{\hookrightarrow}
\newcommand\onto{\twoheadrightarrow}
\newcommand\abuts{\implies}
\newcommand\xra{\xrightarrow}
\newcommand\xla{\xleftarrow}
\newcommand\pr{\mathrm{pr}}
\newcommand\odd{\mathrm{odd}}
\newcommand\even{\mathrm{even}}
\newcommand\bt{\bullet}
\newcommand\by{\times}
\DeclareMathOperator\mc{\mathrm{MC}}
\DeclareMathOperator\mmc{\underline{\mathrm{MC}}}
\newcommand\Vect{\mathrm{Vect}}
\newcommand\Perf{\mathrm{Perf}}
\newcommand\Symm{\mathrm{Symm}}
\newcommand\GL{\mathrm{GL}}
\newcommand\Tot{\mathrm{Tot}\,}
\newcommand\pd{\partial}
\newcommand\half{\frac{1}{2}}
\newcommand\DCrit{\mathrm{DCrit}}
\newcommand\gr{\mathrm{gr}}
\newcommand\Fil{\mathrm{Fil}}
\newcommand\DR{\mathrm{DR}}
\newcommand\op{\mathrm{opp}}
\newcommand\co{\colon\thinspace}
\newcommand\oE{\mathbf{E}}
\newcommand\oR{\mathbf{R}}
\newcommand\oL{\mathbf{L}}
\newcommand\oT{\mathbf{T}}
\newcommand\uleft\underleftarrow
\newcommand\uline\underline
\newcommand\uright\underrightarrow
\DeclareMathOperator{\hatTot}{\mathrm{T}\widehat{\mathrm{o}}\mathrm{t}}
\newtheorem{theorem}{Theorem}[section]
\newtheorem{proposition}[theorem]{Proposition}
\newtheorem{corollary}[theorem]{Corollary}
\newtheorem{lemma}[theorem]{Lemma}
\newtheorem*{theorem*}{Theorem}
\newtheorem*{proposition*}{Proposition}
\newtheorem*{corollary*}{Corollary}
\newtheorem*{lemma*}{Lemma}
\newtheorem*{conjecture*}{Conjecture}
\theoremstyle{definition}
\newtheorem{definition}[theorem]{Definition}
\newtheorem*{definition*}{Definition}
\newtheorem*{notation*}{Notation}
\theoremstyle{remark}
\newtheorem{example}[theorem]{Example}
\newtheorem{examples}[theorem]{Examples}
\newtheorem{remark}[theorem]{Remark}
\newtheorem{remarks}[theorem]{Remarks}
\newtheorem*{example*}{Example}
\newtheorem*{examples*}{Examples}
\newtheorem*{remark*}{Remark}
\newtheorem*{remarks*}{Remarks}
\newtheorem*{exercise*}{Exercise}
\newtheorem*{property*}{Property}
\newtheorem*{properties*}{Properties}
\begin{document}

\begin{abstract}
 We explain how to translate several recent results in derived algebraic geometry to derived differential geometry.  These concern shifted 
Poisson  structures on NQ-manifolds, Lie groupoids, smooth stacks  and derived generalisations, and include existence and classification of various deformation  quantisations.
\end{abstract}

\title[Shifted Poisson structures in derived differential geometry]{An outline of shifted Poisson structures and deformation quantisation in derived differential geometry}
\author{J.P.Pridham}

\maketitle

\section*{Introduction}

In recent years, there have been many developments in the study of shifted Poisson structures and deformation quantisations in derived algebraic geometry, beginning with the  systematic study of shifted symplectic structures in \cite{PTVV}. Translating these results into a differential geometric setting is fairly straightforward, but in most cases this has not been done explicitly, a notable exception being \cite{PymSafronov}. 
This situation has arisen partly because the most suitable setting for derived differential geometry in which to write down these results is that built on dg $\C^{\infty}$-rings, for which the foundations have only recently been written down in 
 \cite{CarchediRoytenbergHomological,nuitenThesis}.

The aim of this manuscript is to  explain how to formulate shifted Poisson  structures and various deformation quantisations in differential geometric settings, and to indicate how to adapt existing algebro-geometric  proofs, in most cases with a summary of the argument. In places we have imposed unnecessarily strong hypotheses for the purposes of exposition, with pointers which we hope will enable readers who need more general statements  to recover them from  the cited results in the literature.

The first section is concerned with shifted symplectic structures. These should be familiar as natural generalisations of the homotopy symplectic structures of \cite{KhudaverdianVoronov,bruceGeomObjects}. We start defining these for NQ-manifolds, which  should be the most familiar of the objects we will consider. We then consider dg manifolds with differentials going in the opposite direction, set up as the analogue of the algebraic dg manifolds of \cite{Quot}; derived critical loci in the form of classical BV complexes give rise to examples of such dg manifolds. The obvious difference between the formulation of NQ-manifolds and of dg manifolds is in the direction of the differentials $Q$ and $\delta$,  but the more important distinction is that we use $\delta$ to define equivalences via homology isomorphisms. Homological considerations then lead to major differences in the behaviour of $Q$ and $\delta$   (see Remarks \ref{derivedstackyrmk} and Appendix \ref{equivapp}). We then formulate shifted symplectic structures for dg NQ-
manifolds and super dg NQ-manifolds, where the main difficulty 
is in keeping track of all the different gradings. 

In Section \ref{poisssn}, we introduce shifted Poisson structures on all these objects, and establish the equivalence between shifted symplectic structures and non-degenerate shifted Poisson structures (Theorems \ref{compatthmLie} and \ref{compatthmdgLie}). On an  NQ-manifold, a shifted Poisson structure is essentially just a shifted $L_{\infty}$-algebra structure on the dg  algebra of smooth functions, with each operation acting as a smooth multiderivation. The description for dg manifolds is similar, while for dg NQ-manifolds the formulation has some subtleties arising from the multiple gradings. 

Section \ref{quantnsn} then discusses deformation quantisation of $n$-shifted Poisson structures. We focus our attention on the cases  $n=0$  (Theorem \ref{derived0quantthm}) and $n=-1$ (Theorem \ref{quantpropsdneg1}). These quantisations respectively correspond to curved $A_{\infty}$ and $BV_{\infty}$ deformations of the  dg  algebra of smooth functions.  We then briefly sketch the deformation quantisation of $0$-shifted Lagrangians (Theorem \ref{quantpropsdlag}). We also look at the case $n=-2$ (Theorem  \ref{uniqueconn}), in which setting quantisations are solutions of a quantum master equation. 

The final section then explains how  these results  translate to Lie groupoids, including higher and derived  Lie groupoids. 
For  smooth Artin stacks, including higher stacks, the corresponding stacky CDGAs of \cite{poisson} or  graded mixed cdgas of \cite{CPTVV} are just given by dg NQ-manifolds, and the formulation of shifted Poisson structures for such stacks comes down to  establishing a simplicial resolution of a higher Lie groupoid  by NQ-manifolds in which all the maps induce quasi-isomorphisms on cotangent complexes. We begin by outlining the subtleties of functoriality in \S \ref{functsn}. In \S\S \ref{liegpdsn}, \ref{higherliegpdsn}, we then show how to resolve  Lie groupoids, higher Lie groupoids and derived higher Lie groupoids by suitable simplicial dg NQ-manifolds, and thus to extend all our constructions to these objects.

Appendix A explains in more detail the notions of equivalence which motivate the different treatment of dg and NQ manifolds, and resulting homotopical considerations.  Appendix B summarises standard obstruction arguments and their application to spaces of Poisson structures and their quantisations.

I would like to thank Ping Xu and Ted Voronov for helpful comments; unfortunately, I was unable to implement all their suggested changes to terminology without generating clashes elsewhere. I would also like to thank all those who have asked questions in related talks, motivating much additional explanation, and the anonymous referee for identifying oversights and providing detailed comments.
 
\tableofcontents

\subsection*{Notation and terminology}\label{notnsn}

From the outset, we will be working with differential graded superalgebras. Thus our objects are initially $\Z \by \Z/2$-graded, and we later encounter objects which are $\Z^2 \by \Z/2$- or even  $\Z^3 \by \Z/2$-graded. However, our indexing conventions differ from those usually found in supermathematics (e.g. in \cite[4.6]{CarchediRoytenbergHomological} or \cite{voronovMackenzie}), in that for us the parity of an element is the  mod $2$ sum of its indices. We accordingly denote our copy of $\Z/2$ by $\{=,\ne\}$, so the parity of  $(m,=)$ is $m \mod 2$ and  parity of  $(m, \ne)$ is $m+1 \mod 2$. We refer to the $\Z$-gradings as degrees (chain or cochain denoted by subscripts and superscripts, respectively) rather than weights, and to the indices $\{=,\ne\}$ as equal and unequal parity. 

In particular, a \emph{super chain complex} will be a $\Z \by \{=,\ne\}$-graded vector space, equipped with a square-zero operation  $\delta$ of degree $-1$ and odd (hence equal) parity.  
For a super chain complex $(V,\delta)$, the subcomplexes  of equal and unequal parity are thus given by
\begin{align*}
V_{\bt,=} =( \ldots \xra{\delta} V_{3}^{\odd} \xra{\delta}  V_{2}^{\even} \xra{\delta}  V_{1}^{\odd} \xra{\delta} V_{0}^{\even} \xra{\delta} \ldots), \\ 
V_{\bt,\ne} = (\ldots \xra{\delta} V_{3}^{\even} \xra{\delta}  V_{2}^{\odd} \xra{\delta}  V_{1}^{\even} \xra{\delta} V_{0}^{\odd} \xra{\delta} \ldots). 
\end{align*}

Similarly, a  \emph{super chain complex} will be a $\Z \by \{=,\ne\}$-graded vector space equipped with a square-zero operation  $Q$ (corresponding to the $\pd$ of \cite{poisson}) or $d$ of degree $+1$ and odd (hence equal) parity. For a super cochain complex $(V, Q)$, we thus have subcomplexes
\begin{align*}
V^{\bt}_{=} =( \ldots \xra{Q} V^{0,\odd} \xra{Q}  V^{1,\even} \xra{Q}  V^{2,\odd} \xra{Q} V^{3,\even} \xra{Q} \ldots),  \\
V^{\bt}_{\ne} = (\ldots \xra{Q} V^{0,\even} \xra{Q}  V^{1,\odd} \xra{Q}  V^{2,\even} \xra{Q} V^{3,\odd} \xra{Q} \ldots). 
\end{align*}
of equal and unequal parity.




For a chain (resp. cochain) complex $M$, we write $M_{[i]}$ (resp. $M^{[j]}$) for the complex $(M_{[i]})_m= M_{i+m}$ (resp. $(M^{[j]})^m = M^{j+m}$). We also denote the parity reversion operator by $\Pi$, so  for a super chain complex $M$, we have $(\Pi M)^{=}_i:= M^{\ne}_i$ and $(\Pi M)^{\ne}_i:= M^{=}_i$.  

We define tensor products to follow the usual super/graded conventions, so for super chain complexes $M,N$, we have
\begin{align*}
 (M\ten N)_n^{=} &= \bigoplus_{i+j=n} ((M_i^{=}\ten N_j^{=})\oplus (M_i^{\ne}\ten N_j^{\ne}))\\
(M\ten N)_n^{\ne} &= \bigoplus_{i+j=n} ((M_i^{=}\ten N_j^{\ne})\oplus (M_i^{\ne}\ten N_j^{=})),
\end{align*}
and we define symmetric powers $\Symm^nM$ by passing to $S_n$-coinvariants of tensor powers $M^{\ten n}$, where the $S_n$-action on $M^{\ten n}$ is twisted by the sign of the permutation on terms of odd parity.
We define alternating powers similarly, but with reversed sign.

Given a commutative algebra $A$ in super chain complexes, and $A$-modules $M,N$ in super chain complexes, we write $\HHom_A(M,N)$ for the super chain complex given by setting $\HHom_A(M,N)_i = \HHom_A(M,N)_i^{=} \oplus \HHom_A(M,N)_i^{\ne}$ to consist of $A$-linear  morphisms from $M$ to $N$ of chain degree $i$, with the decomposition corresponding to  equal and unequal parity;
the differential  on  $\HHom_A(M,N)$ is given by $\delta f= \delta_N \circ f \pm f \circ \delta_M$,
where $V_{\#}$ denotes the graded vector space underlying a chain complex $V$. We follow analogous conventions for  internal $\Hom$s in super cochain complexes, and in super chain cochain complexes.



We will use the symbol $\cong$ to denote isomorphism and $\simeq$ to denote quasi-isomorphism or weak equivalence.

By ``manifold'', we will mean a real $\C^{\infty}$ manifold satisfying the conditions of Whitney's embedding theorem, so admitting a closed immersion to some affine space $\R^n$.

\section{Symplectic structures on stacky and derived enhancements of supermanifolds}

In derived algebraic geometry, derived stacks are  enhancements of schemes in two different ways. Derived structures give analogues of Kuranishi structures, while  stacky structures give analogues of Lie groupoids. Much of the complexity in formulating Poisson structures for derived stacks \cite{poisson,CPTVV} arises from considering the derived and stacky structures simultaneously. Since stacky structures  in the form of Lie algebroids or NQ-manifolds are more likely to be familiar to readers (cf. \cite{PymSafronov}), we start with them (whereas the initial emphasis in \cite{poisson} was on derived structures).

\subsection{Super NQ-manifolds}

The following definition broadly corresponds to the $\infty$-Lie algebroids  of \href{https://ncatlab.org/nlab/show/Lie+infinity-algebroid}{the nlab},  or to the dg manifolds of \cite{roytenbergDGMfdsStacks}.

\begin{definition}\label{NQdef}
 Define an NQ-manifold $X$ to be a pair $(X_0, \O_X)$ where $X_0$ is a real differentiable manifold and
 \[
 \O_X=(\O_X^0 \xra{Q} \O_X^1 \xra{Q} \O_X^2 \xra{Q} \ldots )
\]
 is a graded-commutative $\R$-algebra in cochain complexes of sheaves on $X_0$. We require that $\O_{X}^0$ be the sheaf of smooth functions on $X_0$, that the cochain differential $Q \co \O_X^0 \to \O_X^1$ be a $\cC^{\infty}$-derivation  and that $\O_X$ be locally semi-free in the sense that the underlying graded-commutative sheaf $\O_{X}^{\#}$ is locally of the form $\O_{X_0}\ten \Symm_{\R}U$ for some finite-dimensional graded vector space $U= U^1 \oplus \ldots \oplus U^N$.

Write $\cC^{\infty}(X):= \Gamma(X_0,\O_X)$ for the cochain complex of global sections of $\O_X$.
\end{definition}

In particular,  NQ-manifolds $X$ for which $\O_X$ is generated in cochain degrees $0,1$ correspond to Lie algebroids (see e.g. \cite[Lemma 1.13]{ZambonZhu}).

Beware that we are denoting the underlying manifold $X_0$ with a subscript $0$, although its ring of functions $\O_X^0$ has a superscript $0$. This essentially arises from contravariance, and is part of a general indexing convention in \cite{ddt1,stacks2} inherited from related simplicial and cosimplicial objects.

\begin{remark}
 For each NQ-manifold, the commutative dg algebra $\cC^{\infty}(X)$ naturally has the structure of a  dg $\cC^{\infty}$-ring in the sense of \cite{CarchediRoytenbergHomological}. However, we have slightly more structure because $\cC^{\infty}(X)^0$ is a $\cC^{\infty}$-ring in the sense of \cite{dubuc,MoerdijkReyes} and  $Q \co  \cC^{\infty}(X)^0 \to\cC^{\infty}(X)^1$ is a $\cC^{\infty}$-derivation, while the definition of \cite{CarchediRoytenbergHomological} would only require that $\H^0\cC^{\infty}(X)$ be a $\cC^{\infty}$-ring. 
\end{remark}


The following differs slightly from usual conventions, which tend to regard NQ-manifolds and NQ-supermanifolds as synonymous, since we have $\N_0 \by \Z/2$-gradings rather than just $\N_0$-gradings.

\begin{definition}
Define a super NQ-manifold  $X$ to be a pair $(X_0^{=}, \O_X)$ where $X_0^{=}$ is a real differentiable  manifold and 
$\O_X= \O_{X}^{\ge 0}$
is a graded-commutative $\R$-algebra in super cochain complexes of sheaves on $X_0^{=}$. We require that   the cochain differential $Q \co \O_X^{0,=} \to \O_X^{1,=}$ be a $\cC^{\infty}$-derivation  and that  $\O_X$ be locally semi-free in the sense that the underlying super graded-commutative sheaf $\O_{X}^{\#}$ is locally of the form $\O_{X_0^{=}}\ten \Symm_{\R}U$   for some finite-dimensional super  graded vector space $U= U^{0,\ne} \oplus U^1 \oplus \ldots \oplus U^N$ (with $U^i= U^{i,=}\oplus U^{i,\ne}$). In particular, this implies that $\O_{X}^{0}/(\O_X^{0, \ne})$ is isomorphic to the sheaf of smooth functions on $X_0^{=}$.

Write $\cC^{\infty}(X):= \Gamma(X_0^{=},\O_X)$ for the super cochain complex of global sections of $\O_X$.
\end{definition}

\begin{definition}\label{Omegadef}
Given a  super NQ-manifold  $X$, define $\Omega^1_X$ to be the sheaf of smooth $1$-forms of $\O_X$.
 This is a sheaf of $\O_X$-modules in super cochain complexes (with basis locally given by $\{dx^i\}_i$ when $\O_X$ has local co-ordinates $\{x^i \}_i$), and we write $\Omega^p_X:= \L^p_{\O_X}\Omega^1_X$. We also write
$\Omega^p_{\cC^{\infty}(X)}$ for the super cochain complex $\Gamma(X, \Omega^p_X)$ of global sections of $\Omega^p_X$.  

Define $T_X$ to be the sheaf $\hom_{\O_X}(\Omega^1_X,\O_X)$  of smooth $1$-vectors of $\O_X$, and write $T_{\cC^{\infty}(X)}$ for the super cochain complex $\Gamma(X, T_X)$ of global sections; equivalently, this is the internal $\Hom$ space $\HHom_{\cC^{\infty}(X)}(\Omega^1_{\cC^{\infty}(X)}, \cC^{\infty}(X))$.
\end{definition}

\begin{remark}\label{abstractOmegarmk}
Beware that $\Omega^1_{\cC^{\infty}(X)}$ is not the module of K\"ahler differentials of the abstract super dg algebra $\cC^{\infty}(X)$, since we have constraints requiring that the derivation $d \co \cC^{\infty}(X) \to \Omega^1_{\cC^{\infty}(X)}$ restrict to a $\cC^{\infty}$-derivation on $ \cC^{\infty}(X_0^=)$. In the special case when $X=X_0^=$ is just a manifold, our $\Omega^1_{\cC^{\infty}(X)}$ is the module $\Omega_{\cC^{\infty}(X)}$ of \cite[\S 5.3]{joyceAGCinfty}.

Explicitly, $\Omega^1_{\cC^{\infty}(X)}$ is the $\cC^{\infty}(X)$-module in super cochain complexes generated by elements $da$ (given the same degree and parity as $a$), for homogeneous elements $a \in \cC^{\infty}(X)$, subject to the relations
\begin{enumerate}
 \item $d(ab)= (da)b + (-1)^{\bar{a}} a(db)$, where $\bar{a}$ denotes the parity of $a$, and 
\item for $a_1, \ldots, a_n \in \cC^{\infty}(X)^{0,=}$ and $f \in \cC^{\infty}(\R^n)$, we have
\[
 d(f(a_1, \ldots, a_n)) = \sum_{i=1}^n \frac{\pd f}{\pd x_i}(a_1, \ldots, a_n) d a_i.
\]
\end{enumerate}
The cochain differential $Q$ on $\Omega^1_{\cC^{\infty}(X)}$ is then given by $Q (da)= d(Q a)$; this is often denoted $\cL_Q$, the Lie derivative. The $\cC^{\infty}(X)$-module $T_{\cC^{\infty}(X)}$ is given by derivations on $\cC^{\infty}(X)$, with a similar restriction on $\cC^{\infty}(X)^{0,=}$.
\end{remark}

\begin{definition}\label{DRdefLie}
Given a super NQ-manifold  $X$,  define the de Rham complex $\DR(X)$ to be the  total super cochain complex  of the double complex
\[
 \cC^{\infty}(X) \xra{d} \Omega^1_{\cC^{\infty}(X)} \xra{d} \Omega^2_{\cC^{\infty}(X)}\xra{d} \ldots,
\]
so $\DR(X)^m = \prod_{i+j=m}(\Omega^i_{\cC^{\infty}(X)})^j$ with  total differential  $d \pm Q$.

We define a filtration $\Fil$ on  $\DR(X)$ by setting $\Fil^p\DR(X) \subset \DR(X)$ to consist of terms $\Omega^i_{X}$ with $i \ge p$.
\end{definition}
In our formal arguments, the filtration $\Fil$ will play the same r\^ole as the Hodge filtration of \cite{poisson}, but beware that it is very different in situations where both are defined (such as on complex manifolds).

\begin{remark}\label{DRrmkNQ}
 Since the sheaf  $\Omega^{\bt}_X$ of double supercomplexes on $X_0^=$ is exact in the de Rham direction, note that the complex $\Fil^p\DR(X)$ models derived global sections $\oR\Gamma(X_0^=, \ker(d \co  \Omega^p_X \to \Omega^{p+1}_X))[-p]$ of the sheaf of closed $p$-forms; when $p=0$, this is just $\oR\Gamma(X_0^=, \R)$.
\end{remark}

The complex $\DR(X)$ has the natural structure of a commutative DG super algebra, filtered in the sense that $\Fil^i\Fil^j \subset \Fil^{i+j}$. 

The following definitions are adapted from \cite{PTVV} (where pre-symplectic structures are referred to as closed $p$-forms, and all objects have equal parity), although as noted in \cite{BouazizGrojnowski}, precursors exist in the mathematical physics literature (cf. \cite[Definition 2]{KhudaverdianVoronov} and \cite[Definition 5.2.1]{bruceGeomObjects}, which respectively consider even and odd structures for $\Z/2$-gradings rather than $\Z \by \Z/2$-gradings, with $Q=0$): 

\begin{definition}\label{presymplecticdefLie}
Define an $n$-shifted pre-symplectic structure $\omega$ on a super NQ-manifold  $X$ to be an element
\[
 \omega \in \z^{n+2}\Fil^2\DR(X)^{=} = \{ \omega \in \Fil^2\DR(X)^{n+2,=} ~:~  (d \pm Q)\omega =0\}.
\]

Define a parity-reversed  $n$-shifted pre-symplectic structure to be an element
\[
 \omega \in \z^{n+2}\Fil^2\DR(X)^{\ne}.
\]

Two pre-symplectic structures are regarded as equivalent if they induce the same cohomology class in $\H^{n+2}\Fil^2\DR(X)^{=}$ (resp. $\H^{n+2}\Fil^2\DR(X)^{\ne}$).
\end{definition}

Explicitly, this means that $\omega$ is given by a  sum $\omega = \sum_{i \ge 2} \omega_i$, with $\omega_i \in (\Omega^i_{X})^{n+2-i}$ (of equal or unequal parity, respectively)  and with  $d\omega_i = \pm Q \omega_{i+1}$ and $Q \omega_2=0$. Thus $\omega_2$ is $d$-closed up to a homotopy given by $\omega_3$, and so on.

\begin{definition}\label{NQsympdef}
 Define a (parity-reversed) $n$-shifted symplectic structure $\omega$ on $X$ to be a (parity-reversed) $n$-shifted pre-symplectic structure $\omega$ for which contraction with the component $\omega_2 \in \z^n\Omega^2_{\cC^{\infty}(X)}$ induces a quasi-isomorphism
\begin{align*}
 \omega_2^{\flat} \co T_X\ten_{\O_X}\O_X^0 &\to (\Omega^1_{X}\ten_{\O_X}\O_X^0)^{[n]}\quad \text{resp.}\\
\omega_2^{\flat} \co T_X\ten_{\O_X}\O_X^0  &\to \Pi(\Omega^1_{X}\ten_{\O_X}\O_X^0)^{[n]}
\end{align*}
of complexes of sheaves (i.e. the maps  $\sH^i(T_X\ten_{\O_X}\O_X^0) \to (\Pi) \sH^{n+i}(\Omega^1_{X}\ten_{\O_X}\O_X^0)$   on the associated homology sheaves are isomorphisms); 
note that since $Q \omega_2=0$, these are automatically chain maps by the tensor-Hom adjunction for chain complexes.
\end{definition}

\begin{remarks}\label{NQnondegrmks}
Note that for super NQ-manifolds, bounds on the generators of $\Omega^1_X$ mean that  $n$-shifted symplectic structures can only exist for $n \in [0,N]$.

The non-degeneracy condition in Definition \ref{NQsympdef} is more subtle than the one we will see  in Definition \ref{derivedsympdef} for dg manifolds with differentials in the opposite direction. This is because we need to ensure that $\omega_2^{\flat}$ induces quasi-isomorphisms between tensor powers of tangent complexes and cotangent complexes, which our definition gives by \cite[Lemma \ref{poisson-binondeglemma}]{poisson}.

 Because non-degeneracy is defined as a quasi-isomorphism rather than an isomorphism,  we have to treat negatively and positively graded cochain complexes differently, since they interact differently with homological constructions. In technical terms, the $\O_X$-modules $\Omega^1_X$ in cochain complexes are not usually cofibrant in the projective model structure when $X$ is an NQ-manifold, whereas the corresponding modules for dg manifolds will be.
%
%

\end{remarks}

\begin{examples}\label{exBg}
Many examples of shifted symplectic structures on NQ-manifolds are given in \cite{PymSafronov}.
Prototypical examples are formal completions  $\hat{T}^*[n]M$ of shifted cotangent bundles  of manifolds $M$ for $n\ge 0$, with $\cC^{\infty}(\hat{T}^*[n]M)$ given by the free graded-commutative algebra over $\cC^{\infty}(M)$ generated by the module $T_{\cC^{\infty}(M)}:=\cC^{\infty}(M,TM)_M$ of smooth sections  of the tangent bundle placed in cochain degree $n$, and with trivial differential $Q$. This NQ-manifold carries a natural $n$-shifted symplectic structure $\omega$ given by the canonical closed  $2$-form $\omega_2 \in T_{\cC^{\infty}(M)}\ten_{\cC^{\infty}(M)}\Omega^1_M \subset  (\Omega^2_{\hat{T}^*M[n]})^{[n]}$. 

Similarly, there is a super NQ-manifold  $\Pi \hat{T}^*[n]M$ with $\cC^{\infty}(\Pi \hat{T}^*[n]M)$  freely generated over $\cC^{\infty}(M)$  by  $\Pi T_{\cC^{\infty}(M)}^{[-n]}$, and this carries a natural parity-reversed $n$-shifted symplectic structure.

For a finite-dimensional real Lie algebra $\g$, we can define an NQ-manifold $B\g$ with underlying manifold a point, and functions $\O(B\g)$ given by  the Chevalley--Eilenberg complex
\[
\mathrm{CE}(\g,\R)=(  \R \xra{Q} \g^* \xra{Q} \Lambda^2\g^*\xra{Q} \ldots).
\]
of $\g$ with coefficients in $\R$.
We then have that $\Omega^p_{B\g} \cong  \mathrm{CE}(\g,S^p(\g^*))^{[-p]}$, so consideration of degrees shows that the space of $2$-shifted pre-symplectic structures is just given by $\g$-invariant symmetric bilinear forms $S^2(\g^*)^{\g}$ on $\g$, and that these are symplectic whenever the form is non-degenerate.

A more subtle example is given by the super NQ-manifold associated to the tangent Lie algebroid of a supermanifold $M$. This is given by $X:=(M, \Omega^{\bt}_M)$, with $Q$ corresponding to the de Rham differential. Here, $0$ gives an $n$-shifted symplectic structure for all $n$, because  $\Omega^1_{X}\ten_{\O_X}\O_X^0$ is isomorphic to the acyclic super cochain complex $\id \co \Omega^1_M \to \Omega^1_M$, so $0$ is a quasi-isomorphism between it and all shifts of its dual.
\end{examples}

\subsection{Differential graded supermanifolds}

We now recall a derived generalisation of the affine $\cC^{\infty}$-schemes of \cite{joyceAGCinfty}, in the form of a $\cC^{\infty}$ analogue of the algebraic dg manifolds of \cite[\S 2.5]{Quot}; these should not be confused with the dg manifolds of \cite{roytenbergDGMfdsStacks}, which correspond to our NQ-manifolds. The homotopy theory of such objects is studied in detail in \cite{CarchediRoytenbergHomological}. 
Our dg manifolds correspond to the affine derived manifolds of finite presentation in \cite{nuitenThesis};  of the other approaches to derived differential geometry, this formulation   is closely related to Joyce's $d$-manifolds \cite{joyceDmanIntro} (which however discard much of the derived structure), and is essentially equivalent to the approaches of \cite{BorisovNoel,Spivak} via 
\cite[Corollary 2.2.10]{nuitenThesis}.
The primary  motivation for such objects comes from obstruction theory (notably Kuranishi spaces), and they also allow for phenomena such as well-behaved non-transverse derived intersections.

\begin{definition}\label{derivedmfd}
 Define a ($\cC^{\infty}$) dg manifold $X$ to be a pair $(X^0, \O_X)$ where $X^0$ is a real differentiable  manifold and 
\[
 \O_X= (\O_{X,0} \xla{\delta} \O_{X,1}  \xla{\delta} \O_{X,2}  \xla{\delta} \ldots )
\]
is a graded-commutative $\R$-algebra in chain complexes of sheaves on $X^0$. We require that $\O_{X,0}$ be the sheaf of smooth functions on $X^0$  and that $\O_X$ be locally semi-free in the sense that the underlying graded-commutative sheaf $\O_{X, \#}$ is locally of the form $\O_{X^0}\ten \Symm_{\R}V$ for some finite-dimensional graded vector space $V= V_1 \oplus \ldots \oplus V_n$.

Write $\cC^{\infty}(X):= \Gamma(X^0,\O_X)$ for the chain complex of global sections of $\O_X$.
\end{definition}

The following notation follows \cite{stacks2}; the corresponding notation in \cite{Quot} is $\pi_0$, which we avoid because it usually denotes quotients rather than subspaces, and can lead to confusion when working with stacks or Lie groupoids.
\begin{definition}
 Given a dg manifold $X$, define the truncation $\pi^0X$ to be the $\cC^{\infty}$-subscheme (in the sense of \cite{joyceAGCinfty}) of $X^0$ defined by the ideal $\delta \O_{X,1}$. Thus the space underlying $\pi^0X$ is the vanishing locus of the vector field $\delta$.
\end{definition}

\begin{remarks}\label{derivedstackyrmk}
 Because a dg manifold enhances the structure sheaf in the chain direction, it behaves very differently from an NQ-manifold, where the sheaf is enhanced in the cochain direction. The former (derived enhancements) generalise subspaces of a manifold,  while the latter (stacky enhancements) generalise quotients; the chain and cochain structures are shadows of simplicial and cosimplicial structures, respectively.

For each dg manifold, the commutative dg algebra $\cC^{\infty}(X)$ naturally has the structure of a  dg $\cC^{\infty}$-ring in the sense of \cite{CarchediRoytenbergHomological} (i.e. $\cC^{\infty}(X)_0$ is a $\cC^{\infty}$-ring in the sense of \cite{dubuc,MoerdijkReyes}). This gives a full and faithful contravariant functor from dg manifolds to dg $\cC^{\infty}$-rings. The image includes all finitely generated cofibrant objects, but is much larger, essentially consisting of finitely generated dg $\cC^{\infty}$-rings whose K\"ahler differentials compute the cotangent complex without needing to pass to a cofibrant replacement.\footnote{There is, however, a Quillen equivalent model structure in which these are all cofibrant: see Corollary \ref{locmodelfpcor}.}

Whereas the right notion of equivalence between NQ-manifolds is quite subtle (isomorphism on $X_0$ and quasi-isomorphism on K\"ahler differentials), we can regard a morphism of dg manifolds as an equivalence if it induces a quasi-isomorphism (i.e. a homology isomorphism) between complexes of $\C^{\infty}$ functions.  Many of the  singular $\C^{\infty}$-schemes of \cite{joyceAGCinfty} are then equivalent in this sense to dg manifolds. 

Any homotopically meaningful construction then has to be formulated in such a way that it is invariant under equivalences of dg manifolds (see Appendix \ref{equivapp}). 
In particular, the manifold $X^0$ is not an invariant of the equivalence class of a dg manifold  $X=(X^0, \O_X)$, because for any submanifold $U$  of $X^0$ containing the vanishing locus $\pi^0X$, the inclusion map
\[
 (U, \O_X|_U) \to (X^0, \O_X)
\]
is an equivalence. On the other hand, the $\C^{\infty}$ scheme  $\pi^0X$ is an invariant of the equivalence class. Since most quasi-isomorphisms are not strictly invertible, this also means that we cannot usually perform constructions by just taking local charts and gluing.

\end{remarks}

\begin{examples}\label{DCritex}
Given a manifold $M$ and  a smooth section $s\co M \to E$ of a vector bundle, there is an associated dg manifold, the derived vanishing locus $\oR s^{-1}(0)$  of $s$,  given by $X^0=M$ and $\O_X$ the sheaf of sections of $\Symm_{\R}(E^*_{[-1]})$, with differential 
\begin{align*}
\cC^{\infty}(X)_{r+1} &\xra{\delta} \cC^{\infty}(X)_r\\
 \cC^{\infty}(X,\L^{r+1} E^*) &\xra{s^*}  \cC^{\infty}(X,\L^r E^*).
\end{align*}
Then $\pi^0X$ is the vanishing locus of $s$; a simple example of this form is given by the DGA $\cC^{\infty}(\R) \xra{x^2} \cC^{\infty}(\R)$ resolving the dual numbers $\R[x]/x^2$,   but in general $\O_X$ can have higher homology. Every dg manifold $X$ with $\sO_X$ generated in degrees $0,1$ arises as a derived vanishing locus.

Note that when the image of $s$ intersects the zero section transversely, the morphism $s^{-1}(\{0\}) \to \oR s^{-1}(0)$ is a quasi-isomorphism, in the sense that $\cC^{\infty}(X) \to \H_0\cC^{\infty}(X)= \cC^{\infty}(M)/(s)$ is so.

This class of examples includes the derived critical locus $\DCrit(M,f)$ of a function $f \in \cC^{\infty}(M)$, by considering the section $df$ of the cotangent bundle, as in \cite[\S 2.1]{gwilliamThesis}. The $\cC^{\infty}$-differential graded algebra  $\cC^{\infty}( \DCrit(M,f) )$ is given by the chain complex
\[
 \cC^{\infty}(M) \xla{ \lrcorner df} T_{\C^{\infty}(M)} \xla{ \lrcorner df} \L^2_{\cC^{\infty}(M)}T_{\C^{\infty}(M)} \xla{ \lrcorner df} \ldots,
\]
so $\H_0\cC^{\infty}( \DCrit(M,f) )$ consists of functions on the critical locus of $f$. Explicitly, if $M=\R^n$, then $\cC^{\infty}( \DCrit(M,f) )$ is generated by co-ordinates $x_i,\xi_i$ of chain degrees $0,1$, with $\delta \xi_i =  \frac{\pd f}{\pd x_i}$. As explained on \href{https://ncatlab.org/nlab/show/BV-BRST+formalism}{the nlab}, $\O_{\DCrit(M,f)}$ is the classical BV complex of the function $f$ on the manifold $M$.

Other examples are shifted cotangent bundles $T^*[-n]M$ of manifolds $M$ for $n\ge 0$, with $\cC^{\infty}(T^*[-n]M)$ given by the free graded-commutative algebra over $\cC^{\infty}(M)$ generated by smooth sections $T_{\cC^{\infty}(M)}:=\cC^{\infty}(M,TM)_M$ of the tangent bundle placed in chain degree $n$, and with trivial differential $\delta$. Note that $ \DCrit(M,0)= T^*[-1]M$.
\end{examples}

\begin{definition}\label{derivedsupermfd}
Define a ($\cC^{\infty}$) dg supermanifold $X$ to be a pair $(X^{0,=}, \O_X)$ where $X^{0,=}$ is a real differentiable  manifold and $\O_X= \O_{X, \ge 0}$ is a graded-commutative $\R$-algebra in super chain complexes of sheaves on $X^{0,=}$. We require that   $\O_X$ be locally semi-free in the sense that the underlying super graded-commutative sheaf $\O_{X, \#}$ is locally of the form $\O_{X^{0,=}}\ten \Symm_{\R}V$   for some finite-dimensional super  graded vector bundle $V= V_0^{\ne} \oplus V_1 \oplus \ldots \oplus V_n$ (with $V_i= V_i^{=}\oplus V_i^{\ne}$). In particular, this implies that $\O_{X,0}/(\O_{X,0}^{\ne})$ is isomorphic to the sheaf of smooth functions on $X^{0,=}$.

Write $\cC^{\infty}(X):= \Gamma(X^{0,=},\O_X)$ for the super chain complex of global sections of $\O_X$ and  $\pi^0X^= \subset X^{0,=}$ to be the $\cC^{\infty}$-subscheme  defined by the ideal  given by the image of $\delta \O_{X,1}$. 
\end{definition}

\begin{examples}\label{DCritexodd}
An example of a non-trivial dg supermanifold is given by the  shifted cotangent bundle $T^*[-n]M$ of a supermanifold $M$, with $\cC^{\infty}(T^*[-n]M)$ given by the free graded-commutative algebra over $\cC^{\infty}(M)$ generated by smooth sections $T_{\C^{\infty}(M)}$
of the tangent bundle placed in chain degree $n$, and with trivial differential $\delta$. Another example is the parity-reversed shifted cotangent bundle $\Pi T^*[-n]M$, with $\cC^{\infty}(\Pi T^*[-n]M)$ given by the free graded-commutative algebra over $\cC^{\infty}(M)$ generated by  the tangent bundle $T_{\C^{\infty}(M)}$ placed in chain degree $n$ and unequal parity, and with trivial differential $\delta$.

A more interesting example is given by the derived critical locus $\DCrit(M,f)$ of an odd function $f \in \cC^{\infty}(M)^{\ne}$ on a supermanifold $M$. The $\cC^{\infty}$-differential graded algebra  $\cC^{\infty}( \DCrit(M,f) )$ is given by the chain complex
\[
 \cC^{\infty}(M) \xla{ \lrcorner df} \Pi T_{\C^{\infty}(M)} \xla{ \lrcorner df} \L^2_{\cC^{\infty}(M)}\Pi T_{\C^{\infty}(M)} \xla{ \lrcorner df} \ldots .
\]
Explicitly, if $M=\R^{m|n}$, then $\cC^{\infty}( \DCrit(M,f) )$ is generated by co-ordinates $\{x_i,\xi_i,y_j, \eta_j\}_{1 \le i \le m, 1 \le j \le n}$ with $x_i, y_j$ of chain degree $0$, $\xi_i, \eta_j$ of chain degree $1$, $x_i, \eta_j$ of even parity and $y_i, \xi_i$ of odd parity, with $\delta \xi_i =  \frac{\pd f}{\pd x_i}$ and $\delta \eta_j =  \frac{\pd f}{\pd y_j}$.

More generally, dg supermanifolds $X$ with with $\sO_X$ generated in chain degrees $0,1$ correspond to derived vanishing loci of sections $s \co M \to E$ of supervector bundles $E$ on supermanifolds $M$. 
\end{examples}

\begin{remark}\label{htpyfdrmkd1}
The finiteness conditions for dg manifolds and supermanifolds in Definitions \ref{derivedmfd}, \ref{derivedsupermfd} have been chosen to be stricter than necessary to ease exposition. In fact, our main constructions and results will apply if we only impose finiteness conditions up to homotopy, as in \cite{poisson}. 

Instead of looking at $\cC^{\infty}(X)$ for dg supermanifolds $X$, we could take super chain commutative algebras $A$ with $A_0^{=} = \LLim_{\alpha}\cC^{\infty}(Y_{\alpha})$ for inverse systems $\{Y_{\alpha}\}_{\alpha}$ of submersions of manifolds, and $A_{\#}$ freely generated as a  $(\Z \by \Z/2)$-graded-commutative algebra over $A_0^{=}$ by projective modules $P_0^{\ne},P_1, P_2, \ldots$, possibly of infinite rank. The finiteness condition we would then impose is that the module $\Omega^1_A$ of $\cC^{\infty}$-differential forms (cf. Remark \ref{abstractOmegarmk} or \cite[\S 5.3]{joyceAGCinfty}) of $A$ is perfect, meaning that $\Omega^1_{A}\ten_A\H_0A$ is quasi-isomorphic to a finite complex of projective $\H_0A$-modules of finite rank.

This relaxation of finiteness conditions slightly complicates several formulae, giving them the form occurring in \cite{poisson}. For instance, we would have  to replace symmetric powers of tangent complexes with duals of cosymmetric powers of cotangent complexes in the definition of polyvectors.

For the corresponding algebraic notions, see \cite[Theorem 6.42]{stacks2}, where a slight generalisation of the dg schemes from \cite{Quot} is shown to be equivalent to the notion of derived schemes emerging from higher topoi in \cite{hag2}. 
\end{remark}

\begin{definition}
Given a dg supermanifold $X$, define $\Omega^1_X$ to be the sheaf of smooth $1$-forms of $\O_X$.
 This is a sheaf of super chain complexes, and we write $\Omega^p_X:= \L^p_{\O_X}\Omega^1_X$. We also write
$\Omega^p_{\cC^{\infty}(X)}$ for the super chain complex $\Gamma(X, \Omega^p_X)$ of global sections of $\Omega^p_X$.  

Define $T_X$ to be the sheaf $\hom_{\O_X}(\Omega^1_X,\O_X)$  of smooth $1$-vectors of $\O_X$, and write $T_{\cC^{\infty}(X)}$ for the super chain complex $\Gamma(X, T_X)$ of global sections; equivalently, this is the internal $\Hom$ complex $\HHom_{\cC^{\infty}(X)}(\Omega^1_{\cC^{\infty}(X)}, \cC^{\infty}(X))$.
\end{definition}
Beware that $\Omega^1_{\cC^{\infty}(X)}$ is not the module of K\"ahler differentials of the abstract super dg algebra $\cC^{\infty}(X)$, since there are constraints requiring that the derivation $d \co \cC^{\infty}(X) \to \Omega^1_{\cC^{\infty}(X)}$ restrict to a $\cC^{\infty}$-derivation on $ \cC^{\infty}(X^{0,=})$, similarly to Remark \ref{abstractOmegarmk}.

\begin{definition}\label{DRdefdg}
Given a dg supermanifold $X$,  define the de Rham complex $\DR(X)$ to be the product total super cochain complex $\Tot^{\sqcap}  \Omega^{\bt}_{\cC^{\infty}(X)}$ of the double complex
\[
 \cC^{\infty}(X) \xra{d} \Omega^1_{\cC^{\infty}(X)} \xra{d} \Omega^2_{\cC^{\infty}(X)}\xra{d} \ldots,
\]
so $\DR(X)^m = \prod_j(\Omega^{m+j}_{\cC^{\infty}(X)})_{j}$ with  total differential  $d \pm \delta$.

We define a filtration $\Fil$ on  $\DR(X)$ by setting $\Fil^p\DR(X) \subset \DR(X)$ to consist of terms $\Omega^i_{X}$ with $i \ge p$.
\end{definition}
The complex $\DR(X)$ has the natural structure of a filtered commutative DG super algebra.

\begin{remark}\label{DRrmkdg}
 Note that the quasi-isomorphism class of  $\DR(X)$ only depends on the $\C^{\infty}$-ringed space $\pi^0X^=$ (by \cite[Theorem 6.8]{taroyanDRDerivedDG}, after \cite{FeiginTsygan,bhattDerivedDR}, and noting that the ideal of $\sH_0\sO_X$ generated by $\sH_0\sO_X^{\ne}$ is nilpotent),
so that in particular the map $\DR(X) \to \DR(\pi^0X^=)$ is a quasi-isomorphism whenever $\pi^0X^=$ is a supermanifold. By  \cite[Proposition 4.11 and Theorem 5.12]{taroyanDRDerivedDG}, this is quasi-isomorphic to $\oR\Gamma(\pi^0X^=,\R)$ whenever $\pi^0X^=$ is locally smoothly contractible, and also  for certain zero sets of subanalytic equations. However, beware that the quasi-isomorphism classes of the filtrations $\Fil^p\DR(X)$ depend on the full derived structure of $X$ for $p>0$. 
\end{remark}

It is very important to note that we are taking the product, not direct sum, total complex, to ensure quasi-isomorphism invariance, but that as a consequence we do not have the local quasi-isomorphism with strictly closed forms which occurred in the NQ-manifold case, unless the complex is bounded in the de Rham direction, which only happens if  $\sO_X$ has purely even parity.

\begin{definition}\label{presymplecticdef}
Define an $n$-shifted pre-symplectic structure $\omega$ on a dg supermanifold  $X$ to be an element
\[
 \omega \in \z^{n+2}\Fil^2\DR(X)^{=}.
\]
Define a parity-reversed  $n$-shifted pre-symplectic structure to be an element
\[
 \omega \in \z^{n+2}\Fil^2\DR(X)^{\ne}.
\]
Two pre-symplectic structures are regarded as equivalent if they induce the same cohomology class in $\H^{n+2}\Fil^2\DR(X)^{=}$ (resp. $\H^{n+2}\Fil^2\DR(X)^{\ne}$).
\end{definition}

Explicitly, this means that $\omega$ is given by an infinite  sum $\omega = \sum_{i \ge 2} \omega_i$, with $\omega_i \in (\Omega^i_{X})_{i-n-2}$ (of equal or unequal parity, respectively)  and with  $d\omega_i = \delta \omega_{i+1}$.

\begin{definition}\label{derivedsympdef}
 Define a (parity-reversed) $n$-shifted symplectic structure $\omega$ on  a dg supermanifold $X$ to be a (parity-reversed) $n$-shifted pre-symplectic structure $\omega$ for which  contraction with the component $\omega_2 \in \z^n\Omega^2_{X}$ induces a quasi-isomorphism
\begin{align*}
 \omega_2^{\flat} \co T_X &\to (\Omega^1_X)_{[-n]}\quad \text{resp.}\\
\omega_2^{\flat} \co T_X &\to \Pi(\Omega^1_X)_{[-n]}.
\end{align*}
\end{definition}

Note that for dg supermanifolds, bounds on the generators  mean that  $n$-shifted symplectic structures can only exist for $n \le 0$.

\begin{examples} 
The derived critical loci of  Examples \ref{DCritex} and Examples \ref{DCritexodd} all carry natural shifted symplectic structures.
For $n\ge 0$, the prototypical example of a $(-n)$-shifted symplectic structure is  given by the  shifted cotangent bundle $T^*[-n]M$, with symplectic form $\omega$ given by the canonical closed  $2$-form $\omega_2 \in T_{\cC^{\infty}(M)}\ten_{\cC^{\infty}(M)}\Omega^1_M \subset  (\Omega^2_{T^*M[n]})_{[n]}$. 
A similar expression defines a $(-1)$-shifted symplectic structure on the derived critical locus $\DCrit(M,f)$ of an even  function. 

The prototypical example of a parity-reversed $(-n)$-shifted symplectic structure is  given by the parity-reversed shifted cotangent bundle $\Pi T^*[-n]M$, and the derived critical locus $\DCrit(M,f)$ of an odd  function carries a  parity-reversed $(-1)$-shifted symplectic structure.
\end{examples}

\subsection{Derived NQ-manifolds}\label{dgLiesn}

We now consider supermanifolds enhanced in both stacky and derived directions. For technical reasons, we do not combine the structures in a single grading, but instead work with bigraded objects. The objects $\cC^{\infty}(X)$ we consider are smooth analogues of special cases of the stacky CDGAs of \cite{poisson} or of the graded mixed cdgas of \cite{CPTVV}. For simplicity of exposition, we consider more restrictive objects, which broadly correspond to the derived $\infty$-Lie algebroids of 
\href{https://ncatlab.org/nlab/show/derived+%E2%88%9E-Lie+algebroid}{the nlab},
and are global versions of the quasi-smooth functors of \cite[Definition \ref{ddt1-scspqsdef} and Proposition \ref{ddt1-ctodtnew}]{ddt1}.\footnote{The latter also incorporate a $Q$-acyclicity condition, corresponding to the cofibrant objects of $DG^+dg_+\C^{\infty}$ as in Remark \ref{locmodelNQrmk}, but every equivalence  class under the equivalences of  Remarks \ref{weakerequivrmks} contains such objects. 
}

\begin{definition}\label{derivedlie} 
 Define a dg NQ-manifold $X$ to be a pair $(X_0^0, \O_X)$ where $X_0^0$ is a real differentiable manifold and $\O_X$ is a graded-commutative $\R$-algebra
\[
 \xymatrix{ 
\vdots \ar[d]^{\delta}  & \vdots \ar[d]^{\delta}& \\
\O_{X,1}^0 \ar[r]^{Q}\ar[d]^{\delta} & \O_{X,1}^1 \ar[r]^{Q}\ar[d]^{\delta} & \ldots\\
 \O_{X,0}^0 \ar[r]^{Q} & \O_{X,0}^1 \ar[r]^{Q} &  \ldots } 
\]
  in chain cochain complexes of sheaves on $X_0^0$. We require that $\O_{X,0}^0$ be the sheaf of smooth functions on $X_0^0$, that the cochain differential $Q \co \O_{X,0}^0 \to \O_{X,0}^1$ be a $\cC^{\infty}$-derivation  and that $\O_X$ be locally semi-free in the sense that the underlying bigraded-commutative sheaf $\O_{X,\#}^{\#}$ is locally of the form $\O_{X_0}\ten \Symm_{\R}(V)$ for some finite-dimensional bigraded vector space $V= \bigoplus_{\substack{0 \le i \le m \\ 0 \le j \le n}}V^i_j$ with $V^0_0=0$ (and $\Symm$ defined as in \S \ref{notnsn}, so parity is the mod $2$ sum of both indices).
  
  We impose the additional conditions, which we call the Artin conditions,  that the graded-commutative $\sH^{\delta}_0\sO_X^0$-algebra $\sH_0^{\delta}(\sO^{\#}_{X,\bt}) $ should be locally freely generated,
  and that $\sH_*^{\delta}(\sO_{X,\bt})$ should be Cartesian over $\sH_0^{\delta}(\sO_{X,\bt}) $ in the sense that
  the multiplication maps $\sH_j^{\delta}(\sO^0_{X,\bt})\ten_{\sH_0^{\delta}(\sO^0_{X,\bt})}  \sH_0^{\delta}(\sO^i_{X,\bt}) \to \sH_j^{\delta}(\sO_{X,\bt}^i)$ should be isomorphisms.

Write $\cC^{\infty}(X):= \Gamma(X_0^0,\O_X)$ for the chain cochain complex of global sections of $\O_X$.
\end{definition}

\begin{examples}\label{Rstex}
The most easily constructed examples of dg NQ-manifolds arise when  $\sO_{X,\#}^{\#}$ is of the form $\O_{X_0}\ten \Symm_{\R}(U\oplus V)$ for some finite-dimensional bigraded vector spaces $U= U^1_0 \oplus \ldots \oplus U^N_0$ and $V=V^0_1 \oplus \ldots \oplus V^0_m$. The Artin conditions then automatically follow because  we have $\sO^{\#}_{X,\bt} \cong \sO_X^0\ten_{\R}\Symm_{\R}(U)$  and hence   $\sH_j^{\delta}(\sO^{\#}_{X,\bt}) \cong \sH_j^{\delta}(\sO^0_{X,\bt})\ten_{\R}\Symm_{\R}(U)$.

 For a simple example, consider the case where $X^0_0$ is a point, and $\O_X$ is freely generated by $s \in \O_{X,0}^2$ and $t \in \O_{X,2}^0$. Then $\O_X=\R[s,t]$, with bigrading given by $s^it^j \in \O_{X,2j}^{2i}$.
 \end{examples}

\begin{definition}
 Given a  dg NQ-manifold $X$, define the truncation $\pi^0X\subset X^0$ to be the dg-ringed space 
$(\pi^0(X_0), \O_{X,0}/\delta \O_{X,1})$. 
\end{definition}

\begin{remarks}\label{Artincdnrmk}
 The Artin conditions of Definition \ref{derivedlie} are motivated by the structures naturally occurring on charts of derived Artin stacks defined via Definition \ref{Dstardef}. They can be interpreted as saying that $\pi^0X$ should be a singular NQ-manifold 
 (i.e. having functions locally freely generated over the $\C^{\infty}$-scheme $\pi^0X_0$) 
 and that the sheaves $\sH_n^{\delta}\sO_X$ should be Cartesian coherent $\pi^0X$-modules. As in  \cite[Examples \ref{smallet2-BGLnex} and \ref{smallet2-QCohex}]{smallet2} (cf. Example \ref{vbundleex}), the latter are the natural analogue of coherent modules for an  NQ-manifold.\footnote{Specifically, they correspond to morphisms $\pi^0X \to D_*\mathrm{Coh}$, where  $\mathrm{Coh}$  is the stack classifying coherent sheaves and $D_*$  a canonical  fully faithful functor from stacks on $\C^{\infty}$-spaces to stacks on  singular NQ-manifolds.}
   For the natural notion of equivalence for dg NQ-manifolds (see Remarks \ref{weakerequivrmks}), relaxing the Artin conditions would not yield any new equivalence classes of objects. 
 
 Those conditions have an alternative characterisation in terms of the cotangent module, corresponding to the requirement that $\bigoplus_{i>0} \H_j(\Omega^1_{A}\ten_{A}\H_0(A^0))^i$ be 
be a  projective $\H_0(A^0)$-module of finite rank when $j=0$, and vanish for $j>0$. The equivalence of conditions follows because $A^{>0}/(A^{>0}\cdot A^{>0})$ is the conormal module $\ker(\Omega^1_A\ten_AA^0 \to \Omega^1_{A^0})$.
 \end{remarks}

\begin{definition}\label{derivedsuperlie}
Define a super dg NQ-manifold  $X$ to be a pair $(X_0^{0,=}, \O_X)$ where $X_0^{0,=}$ is a real differentiable  manifold and 
$\O_X= \O_{X, \ge 0}^{\ge 0}$
is a graded-commutative $\R$-algebra in super chain cochain complexes of sheaves on $X_0^{0,=}$. We require  that the cochain differential $Q \co \O_{X,0}^{0,=} \to \O_{X,0}^{1,=}$ be a $\cC^{\infty}$-derivation  
 and that  $\O_X$ be locally semi-free in the sense that the underlying super bigraded-commutative sheaf $\O_{X,\#}^{\#}$ is locally of the form $\O_{X_0^{0,=}}\ten \Symm_{\R}(V)$   for some finite-dimensional super  bigraded vector space $V= \bigoplus_{\substack{0 \le i \le m \\ 0 \le j \le n}}(V^{i,=}_j \oplus V^{i,\ne}_j)$ with $V^{0,=}_0=0$ (and $\Symm$ defined as in \S \ref{notnsn}, so parity is the mod $2$ sum of all three indices).
  In particular this implies that  $\O_{X,0}^{0}/(\O_{X,0}^{0, \ne})$ is isomorphic to the sheaf of smooth functions on $X_0^{0,=}$,

  We impose the additional conditions (the Artin conditions) that the graded commutative $\H^{\delta}_0\sO_X^0$-superalgebra $\sH_0^{\delta}(\sO^{\#}_{X,\bt}) $ should  be locally freely generated,  and that the multiplication maps $\sH_j^{\delta}(\sO^0_{X,\bt})\ten_{\sH_0^{\delta}(\sO^0_{X,\bt})}  \sH_0^{\delta}(\sO^i_{X,\bt}) \to \sH_j^{\delta}(\sO_{X,\bt}^i)$ should be isomorphisms.
  
Write $\cC^{\infty}(X):= \Gamma(X_0^{0,=},\O_X)$ for the super chain cochain complex of global sections of $\O_X$.
\end{definition}

\begin{example}\label{exYoverg}
 If $Y=(Y^{0,=},\O_Y)$ is a dg supermanifold, with a finite-dimensional Lie superalgebra $\g$ acting on $\O_Y$, in the sense of a map from  $\g$ to the Lie superalgebra of derivations of $\O_Y$ of degree $0$ which commute with the differential $\delta$. 
 
 There is then a dg super NQ-manifold  $[Y/\g]$ given by setting $[Y/\g]^{0,=}_0= Y^{0,=}_0$ and setting $\O_{[Y/\g]}$ to be the super cochain chain complex
\[
\O_{[Y/\g]}:=(  \O_Y \xra{Q} \O_Y\ten \g^* \xra{Q} \O_Y\ten \Lambda^2\g^*\xra{Q} \ldots), 
\]
the Chevalley--Eilenberg double complex of $\g$ with coefficients in the super chain complex $\O_Y$.

In particular, if $\O_Y$ is generated in chain degrees $0,1$ then $Y$  is a derived vanishing locus of the section $s$ of a vector bundle on $ Y^0$ as in Example \ref{DCritex}, and $s$ is necessarily $\g$-equivariant. More generally, giving a dg NQ supermanifold $X$ for which $\sO_X$ is generated by $\sO_{X,0}^0$, $\sO_{X,0}^1$ and $\sO_{X,1}^0$ can be thought of as the derived vanishing locus of a section of a vector bundle $E$ on an NQ-1 supermanifold  (equivalently,  Lie superalgebroid) $X^0$, where $\O_{X,1}$ is the sheaf of sections of $E^*$ on $X^0$.
\end{example}

\begin{remark}\label{htpyfdrmkd2}
As we saw in Remark \ref{htpyfdrmkd1} for  dg supermanifolds,
the finiteness conditions for dg NQ-manifolds and super dg NQ-manifolds  in Definitions \ref{derivedlie}, \ref{derivedsuperlie} can be relaxed to hold only up to homotopy, as in \cite{poisson}. 

Instead of super  differential bigraded algebras  $\cC^{\infty}(X)$ for super dg NQ-manifolds $X$, we could take super chain cochain commutative algebras $A$, with $A^0$ as in Remark \ref{htpyfdrmkd1}, and $A^{\#}_{\#}$ freely generated as a graded algebra over $A^0_0$ by projective modules $M^i_j$ (possibly of infinite rank) with $M^{0,=}_0=0$. In addition to the finiteness condition on smooth differentials $\Omega^1_{A^0}$ from Remark \ref{htpyfdrmkd1}, we would have to impose the Artin conditions from Definitions \ref{derivedlie}, \ref{derivedsuperlie}, also requiring that $\H_0A^{\#}$ be locally finitely generated as an algebra over $\H_0A^0$.

This gives a much more flexible category of objects with which to work
(essentially corresponding to the derived Lie algebroids locally of finite presentation in \cite{nuitenThesis}), but several constructions involving duals would have to be rewritten, along the lines of the expressions in \cite{poisson}.
\end{remark} 

\begin{remark}[Filtrations and $\Z$-graded manifolds]\label{filtrmk}
 The homotopy theory of double complexes up to  levelwise $\delta$-quasi-isomorphism is equivalent to the homotopy theory of complete filtered complexes, under the functor sending a double complex $A$ to the decreasing filtration $\{\Tot^{\sqcap} A^{\ge n}\}_{n \in \Z}$ given by product total complexes. All of our constructions and conditions  for a dg super NQ-manifold $X$ can thus be rephrased in terms of the super  $\C^{\infty}$-DGA $\Tot^{\sqcap} \C^{\infty}(X)$ equipped with the filtration  $\{\Tot^{\sqcap}(\C^{\infty}(X)^{\ge n})\}_n$, although that approach is never taken systematically in the derived literature for technical reasons. For details, see \S \ref{filtsn}.
 
 For the dg NQ manifold $X$ with $\O_{X,2j}^{2i}=\R\cdot s^it^j  $ from Examples \ref{Rstex}, we have $(\Tot^{\sqcap}\sO_X)^{2n} = s^n \R \llb st \rrb$, for   $\R \llb st \rrb$ the power series ring, and $(\Tot^{\sqcap}\sO_X)^{-2n} = t^n \R \llb st \rrb$, with filtration $\Fil^m(\Tot^{\sqcap}\sO_X)= s^{\lfloor m/2\rfloor}(\Tot^{\sqcap}\sO_X)$.
 \end{remark}

\begin{definition}
Given a  super dg NQ-manifold  $X$, define $\Omega^1_X$ to be the sheaf of smooth $1$-forms of $\O_X$.
 This is a sheaf of super chain cochain complexes, and we write $\Omega^p_X:= \L^p_{\O_X}\Omega^1_X$. We also write
$\Omega^p_{\cC^{\infty}(X)}$ for the super chain cochain complex $\Gamma(X, \Omega^p_X)$ of global sections of $\Omega^p_X$.  

Define $T_X$ to be the sheaf $\hom_{\O_X}(\Omega^1_X,\O_X)$  of smooth $1$-vectors of $\O_X$, and write $T_{\cC^{\infty}(X)}$ for the super chain cochain complex $\Gamma(X, T_X)$ of global sections; equivalently, this is the internal $\Hom$ space $\cHom_{\cC^{\infty}(X)}(\Omega^1_{\cC^{\infty}(X)}, \cC^{\infty}(X))$.
\end{definition}

\begin{definition}\label{DRdefderivedLie}
Given a super dg NQ-manifold  $X$,  define the de Rham complex $\DR(X)$ to be the  product total super cochain complex $\Tot^{\sqcap}  \Omega^{\bt}_{\cC^{\infty}(X)}$ of the triple complex
\[
 \cC^{\infty}(X) \xra{d} \Omega^1_{\cC^{\infty}(X)} \xra{d} \Omega^2_{\cC^{\infty}(X)}\xra{d} \ldots,
\]
so $\DR(X)^m = \prod_{i+j-k=m}(\Omega^i_{\cC^{\infty}(X)})^j_k$ with  total differential  $d \pm Q\pm \delta$.

We define a filtration $\Fil$ on  $\DR(X)$ by setting $\Fil^p\DR(X) \subset \DR(X)$ to consist of terms $\Omega^i_{X}$ with $i \ge p$.
\end{definition}
The complex $\DR(X)$ has the natural structure of a filtered commutative DG super algebra. 

\begin{remark}\label{DRrmkdgNQ}
The filtration of $\Omega^{\bt}_X$  by powers of $\ker(\Omega^{\bt}_X \to \Omega^{\bt}_{X_0})$ has  associated graded pieces which are bounded in the cochain direction and $\Tot$-acyclic, so we have  a quasi-isomorphism $\DR(X) \to \DR(X_0)$ (but not a quasi-isomorphism on $\Fil^p$). 

 As in Remark \ref{DRrmkdg}, the quasi-isomorphism class of   $\DR(X)$ (ignoring $\Fil$) thus only depends on the $\C^{\infty}$-ringed space $\pi^0X_0^{=}$, and often corresponds to $\oR\Gamma(\pi^0X^{=},\R)$.
\end{remark}

\begin{definition}\label{presymplecticdefdLie}
Define an $n$-shifted pre-symplectic structure $\omega$ on a super dg NQ-manifold  $X$ to be an element
\[
 \omega \in \z^{n+2}\Fil^2\DR(X)^{=}.
\]
Define a parity-reversed  $n$-shifted pre-symplectic structure to be an element
\[
 \omega \in \z^{n+2}\Fil^2\DR(X)^{\ne}.
\]
Two pre-symplectic structures are regarded as equivalent if they induce the same cohomology class in $\H^{n+2}\Fil^2\DR(X)^{=}$ (resp. $\H^{n+2}\Fil^2\DR(X)^{\ne}$).
\end{definition}

Explicitly, this means that $\omega$ is given by an infinite  sum $\omega = \sum_{i \ge 2,j \ge 0} \omega_{i,j}$, with $\omega_{i,j} \in (\Omega^i_{X})^{n+2-i+j}_j$ (of equal or unequal parity, respectively)  and with  $d\omega_{ij} = \delta \omega_{i+1,j+1}\pm Q \omega_{i+1,j}$, for the de Rham differential $d$.

\begin{definition}\label{symplecticdefdgLie}
 Define a (parity-reversed) $n$-shifted symplectic structure $\omega$ on $X$ to be a (parity-reversed) $n$-shifted pre-symplectic structure $\omega$ for which  contraction with the component $\omega_{2,\bt} \in \z^n\Omega^2_{X}$ induces a quasi-isomorphism
\begin{align*}
 \omega_{2,\bt}^{\flat} \co \Tot (T_X\ten_{\O_X}\O_X^0) &\to \Tot (\Omega^1_X\ten_{\O_X}\O_X^0)^{[n]}\quad \text{resp.}\\
\omega_{2,\bt}^{\flat} \co \Tot (T_X\ten_{\O_X}\O_X^0) &\to \Pi\Tot (\Omega^1_X\ten_{\O_X}\O_X^0)^{[n]}.
\end{align*}
on total complexes.
\end{definition}

Note that if $X$ carries a $0$-shifted symplectic structure, then either it is a supermanifold  or it has non-trivial enhancements in both stacky and derived directions, since we need the chain and cochain generators of $\cC^{\infty}(X)$ to balance each other. The motivation for this formulation of non-degeneracy is the same as in Remarks \ref{NQnondegrmks}, with  \cite[Lemma \ref{poisson-binondeglemma}]{poisson} again guaranteeing good behaviour under tensor operations.

\begin{examples}
Shifted cotangent bundles of NQ-manifolds give rise to dg NQ-manifolds with natural shifted symplectic structures, similarly to \cite{calaqueShiftedCotangent}. For instance, the NQ-manifold $B\g$ (i.e. $[*/\g]$) associated to a Lie algebra $\g$ has derived cotangent bundle $T^*B\g$ given by $[(\g^*[-1])/\g]$, which carries a canonical $0$-shifted symplectic structure. Explicitly, the manifold underlying $ T^*B\g$ is a point, with $\O_{T^*B\g}$ freely generated as a bigraded algebra by $\g$ in chain degree $1$ and $\g^*$ in cochain degree $1$, with chain differential $\delta=0$ and cochain differential $Q$ given by the Chevalley--Eilenberg complex of $\g$ acting on $\L^*\g$. The symplectic structure consists of the single element $\omega_2 \in (\Omega^2_{T^*B\g})^1_1$ corresponding to the image under $d \ten d$ of the  canonical element of $\g\ten \g^*$. 


A more interesting example of a dg NQ-manifold with a $0$-shifted symplectic structure is given by the infinitesimal analogue of the derived Hamiltonian reduction from \cite{safronovPoissonRednCoisotropic}. Given a symplectic manifold $Y$ equipped with an infinitesimal action of a Lie group $\g$ preserving the symplectic form, and a $\g$-equivariant map $\mu \co Y \to \g^*$ which is Hamiltonian for the $\g$-action, we can first form the derived vanishing locus $\oR \mu^{-1}(0)$ as in Example \ref{DCritex} (a dg manifold), then form the dg NQ-manifold $[\oR \mu^{-1}(0)/\g]$ as in Example \ref{exYoverg}.
Functions on  $[\oR \mu^{-1}(0)/\g]$ look like
%
\[
 \xymatrix{
 \vdots \ar[d] & \vdots \ar[d] & \vdots \ar[d] \\
 \O_Y \ten \L^2\g  \ar[r]_-{Q} \ar[d]_{\lrcorner \mu} & \O_Y\ten\L^2\g\ten  \g^* \ar[r]_-{Q}\ar[d]_{\lrcorner \mu} & \O_Y\ten \L^2\g \ten \L^2\g^* \ar[r]_-{Q} \ar[d]_{\lrcorner \mu}& \ldots\\
 \O_Y \ten \g \ar[r]_-{Q} \ar[d]_{ \mu} & \O_Y\ten\g\ten  \g^*\ar[r]_-{Q} \ar[d]_{ \mu} & \O_Y\ten \g \ten \L^2\g^* \ar[r]_-{Q} \ar[d]_{ \mu} & \ldots \\
   \O_Y \ar[r]_-{Q} & \O_Y\ten \g^* \ar[r]_-{Q} & \O_Y\ten \L^2\g^* \ar[r]_-{Q} & \ldots,
}
\]
and it carries a $0$-shifted symplectic structure given by the $2$-form which combines the symplectic form on $Y$  with the pairing of $\g$ and $\g^*$. The Hamiltonian condition reduces to the condition that the form is closed under $Q \pm \delta$. Experts may recognise similarities between this construction and \cite{stasheffHomologicalRednPoisson}.
\end{examples}

\begin{definition}\label{productdef}
Given dg NQ-manifolds $X,Y$, define the product $X \by Y$ to be the NQ-manifold with underlying manifold $(X \by Y)_0^{0,=}= X_0^{0,=} \by Y_0^{0,=}$, and with structure sheaf given by pullback
\[
\O_{X \by Y}:= ((\pr_1^{-1}\O_X)\ten (\pr_2^{-1}\O_Y))\ten_{((\pr_1^{-1}\O_{X_0^{0,=}})\ten_{\R}(\pr_2^{-1}\O_{Y_0^{0,=}}))}  \O_{(X \by Y)_0^{0,=}},  
\]
for the projection maps $\pr_1 \co X_0^{0,=} \by Y_0^{0,=} \to X_0^{0,=}$ and $\pr_2\co X_0^{0,=} \by Y_0^{0,=} \to Y_0^{0,=}$.
\end{definition}

\begin{example}[Transgression]\label{transgressex}
A major source of shifted symplectic structures is the \cite{AKSZ}-inspired transgression procedure of \cite[Theorem 2.5]{PTVV}. Although the  finiteness it requires on the source $X$ is more common in algebraic settings, being satisfied by proper schemes, and is very rare for differentiable manifolds, it is relatively common for NQ manifolds. The procedure translates into our setting as follows.

Take a dg NQ manifold $X$ equipped with a trace element $\tau \in \z^{-d}\Tot \cD_c(X)$, a closed element of degree $-d$ in the total complex of the double complex $\cD_c(X):=\cHom_{\C^{\infty}(X^0_0)}(\C^{\infty}(X), \cD_c(X^0_0))$ of compactly supported distributions on $X$ (i.e. the continuous dual of $\C^{\infty}(X)$). For example, if $X$ is the NQ manifold associated to the tangent Lie algebroid of a compact orientable $d$-manifold $N$, then we can take $\tau$  to be  $\int_N \co  \Omega^d(N) \to \R$.

For any vector bundle $E$ on any dg NQ manifold $M$, we then have an induced map $\tau \co \Tot^{\sqcap} \C^{\infty}(M \by X, \pr_1^{-1}E)\to \Tot^{\sqcap} \C^{\infty}(M, E)[-d]$.  Consequently, for each morphism $f \co M \by X \to Y$, we have an induced chain map 
\[
 \Fil^2\DR(Y) \xra{f^{\sharp}} \Fil^2\DR(M \by X) \to  \Tot^{\sqcap}(\Omega^{\ge 2}_{\cC^{\infty}(M \by X)/\cC^{\infty}(X)}) \xra{\tau}  \Fil^2\DR(M)[-d],
\]
associating an $(n-d)$-shifted pre-symplectic structure on $M$ to each $n$-shifted pre-symplectic structure on $Y$, where $\Omega^{1}_{B/A}:= \coker( \Omega^1_A\ten_AB \to \Omega^1_B)$, and $\Omega^p_{B/A}$ is its $p$th alternating power.

Comparison of tangent spaces then gives conditions for symplectic structures on $Y$ to yield symplectic structures on $M$, which are roughly speaking satisfied if $\tau$ induces a perfect pairing on cohomology (Poincar\'e duality in our example above) and  $M$ is locally diffeomorphic to the functor of maps from $X$ to $Y$.
\end{example}

\section{Shifted Poisson structures on super derived NQ-manifolds}\label{poisssn}

This section is adapted from \cite{poisson}, transferring results from the algebraic to the smooth setting. Whereas \cite{poisson} begins with derived affines (analogous to dg manifolds), we begin with super NQ-manifolds, since these are more likely to be familiar to readers. The two cases behave similarly, both having simplifications compared with the general case (super dg NQ-manifolds), since they do not require us to deal with chain and cochain structures simultaneously. There are however slight differences in the notion of non-degeneracy, which we have to adapt from \cite[\S \ref{poisson-Artinsn}]{poisson} (rather than \cite[\S \ref{poisson-affinesn}]{poisson} as we would for dg manifolds).

\subsection{Shifted Poisson structures on super NQ-manifolds}\label{poisssnLie}

For now, we fix a super NQ-manifold  $X=(X_0^{=}, \O_X)$.

\subsubsection{Polyvector fields}\label{polsnLie}

\begin{definition}\label{poldefLie}   
We  define the super cochain complex of $n$-shifted polyvector fields on $X$ by
\[
 \widehat{\Pol}(X,n):= \prod_i \Symm_{\cC^{\infty}(X)}^i(T_{\cC^{\infty}(X)}^{[-n-1]}), 
\]
with graded-commutative  multiplication following the usual conventions for symmetric powers.  
Similarly, define the super cochain complex of $n$-shifted polyvector fields of reversed parity  on $X$ by
\[
 \widehat{\Pol}(X,\Pi n):= \prod_i\Symm_{\cC^{\infty}(X)}^i(\Pi T_{\cC^{\infty}(X)}^{[-n-1]})
\]

The Lie bracket on $T_{\cC^{\infty}(X)}$ then extends to give a bracket (the Schouten--Nijenhuis bracket)
\[
[-,-] \co \widehat{\Pol}(X,n)\by \widehat{\Pol}(X,n)\to \widehat{\Pol}(X,n)^{[-n-1]},
\]
of equal parity
determined by the property that it is a bi-derivation with respect to the multiplication operation. 
Similarly, $\widehat{\Pol}(X,\Pi n)$ has a Lie bracket bi-derivation of cochain degree $-n-1$ and unequal parity.

Thus  $\widehat{\Pol}(X,n)$ has the natural structure of a super $P_{n+2}$-algebra  (i.e. an $(n+1)$-shifted Poisson algebra in super cochain complexes), so $\widehat{\Pol}(X,n)^{[n+1]}$  is a super differential graded Lie algebra (DGLA) over $\R$. In particular, the subcomplexes   $\widehat{\Pol}(X,n)^{[n+1],=}$ and  $\widehat{\Pol}(X,\Pi n)^{[n+1],\ne}$ are DGLAs over $\R$. 

\begin{remark}
 Here, we follow the conventions of \cite{melaniPoisson} for $P_k$-algebras, so they carry a graded-commutative multiplication of degree $0$, and a graded Lie bracket of degree $1-k$, both of equal parity. In particular, this means that if we commute the $\Z$-action to a $\Z/2$-action, then a $P_k$-algebra is a Poisson algebra for odd $k$, and a Gerstenhaber algebra for even $k$. 
\end{remark}

Note that the cochain differential $Q$ on $\widehat{\Pol}(X,n)$ (resp. $\widehat{\Pol}(X,\Pi n)$)  can be written as $[Q,-]$, where $Q \in \widehat{\Pol}(X,n)^{n+2,=}$ (resp. $\widehat{\Pol}(X,\Pi n)^{n+2,\ne} $) is the element corresponding to the derivation $Q \in (T_{\cC^{\infty}(X)})^1$.
\end{definition}

\begin{definition}\label{Fdef}
Define  decreasing filtrations $\Fil$ on  $\widehat{\Pol}(X,n)$  and $\widehat{\Pol}(X,\Pi n)$ by 
\begin{align*}
 \Fil^i\widehat{\Pol}(X,n)&:= \prod_{j \ge i} \Symm^j_{\cC^{\infty}(X)}(T_{\cC^{\infty}(X)}^{[-n-1]});\\
\Fil^i\widehat{\Pol}(X,\Pi n)&:= \prod_{j \ge i}  \Symm^j_{\cC^{\infty}(X)}(\Pi T_{\cC^{\infty}(X)}^{[-n-1]}) ;
\end{align*}
this has the properties that $\widehat{\Pol}(X,n)= \Lim_i \widehat{\Pol}(X,n)/\Fil^i$, with $[\Fil^i,\Fil^j] \subset \Fil^{i+j-1}$, $Q \Fil^i \subset \Fil^i$, and $\Fil^i \Fil^j \subset \Fil^{i+j}$, and similarly for $\widehat{\Pol}(X,\Pi n)$.
\end{definition}

Observe that this filtration makes $\Fil^2\widehat{\Pol}(X,n)^{[n+1],=}$ and $\Fil^2\widehat{\Pol}(X,\Pi n)^{[n+1],\ne}$ into pro-nilpotent DGLAs.

\subsubsection{Poisson structures}

\begin{definition}\label{mcdef}
 Given a   DGLA $(L,d)$, define the the Maurer--Cartan set by 
\[
\mc(L):= \{\omega \in  L^{1}\ \,|\, d\omega + \half[\omega,\omega]=0 \in  \bigoplus_n L^{2}\}.
\]
\end{definition}

\begin{definition}\label{poissdef0Lie}
Define an   $n$-shifted Poisson structure on $X$ to be an element of
\[
 \mc(\Fil^2 \widehat{\Pol}(X,n)^{[n+1],=}),
\]
and an $n$-shifted Poisson structure of reversed parity on $X$ to be an element of
\[
 \mc(\Fil^2 \widehat{\Pol}(X,\Pi n)^{[n+1],\ne}).
\]

Regard two  $n$-shifted  Poisson structures  as equivalent if they are gauge equivalent as Maurer--Cartan elements (cf. \cite{Man}), i.e. if they lie in the same orbit for the gauge action on the Maurer--Cartan set of   the formal group  $\exp(\Fil^2 \widehat{\Pol}(X,n)^{n+1})$ corresponding to the pro-nilpotent  Lie algebra $\Fil^2 \widehat{\Pol}(X,n)^{n+1}$.
\end{definition}

\begin{remark}
Observe that $n$-shifted  Poisson structures consist of infinite sums $\pi = \sum_{i \ge 2}\pi_i$ with polyvectors
\[
 \pi_i \in \Symm^i_{\cC^{\infty}(X)}(T_{\cC^{\infty}(X)}^{[-n-1]})^{n+2}
\]
 satisfying $Q(\pi_i) + \half \sum_{j+k=i+1} [\pi_j,\pi_k]=0$. This is precisely the condition which ensures that $\pi$ defines an $L_{\infty}$-algebra structure on $\cC^{\infty}(X)^{[n]}$. It then makes $\cC^{\infty}(X)$ into a $\hat{P}_{n+1}$-algebra in the sense of  \cite[Definition 2.9]{melaniPoisson}; in \cite{CattaneoFelder} these are referred to as $P_{\infty}$-algebras in the case $n=0$. 
In our setting, however, we have more than just an abstract $\hat{P}_{n+1}$-algebra structure, since we have a $\cC^{\infty}$-ring and $\cC^{\infty}$ derivations.

Pre-existing names for $0$-shifted Poisson NQ-manifolds include flat homotopy Poisson manifolds, $P_{\infty}$-manifolds and homotopy Poisson manifolds. 
\end{remark}

\begin{example}
For $n=0$ and $X$ a manifold, Definition \ref{poissdef0Lie} recovers the usual notion of a Poisson structure, as we necessarily have $\pi=\pi_2$ for degree reasons, and the Maurer--Cartan equation reduces to the Jacobi identity.
\end{example}

\begin{example}\label{exCasimir}
As in \cite[Examples \ref{poisson-2PBG}]{poisson}, for any manifold $M$ equipped with an action of a finite-dimensional real Lie algebra $\g$,  we may consider the NQ-manifold $[M/\g]=(M, \mathrm{CE}(\g, \C^{\infty}(M)))$, where $\mathrm{CE}(\g,-)$ denotes the Chevalley--Eilenberg complex of a $\g$-representation. Its tangent complex is then given by
\[
T_{\cC^{\infty}([M/\g])}:= \mathrm{CE}(\g, \cone( \g \ten \C^{\infty}(M)) \to T_{\cC^{\infty}(M)}), 
\]
which is concentrated in cohomological degrees $\ge -1$.
Degree restrictions thus show that the set of $2$-shifted Poisson structures is given by 
\[
 \{ \pi \in (S^2\g \ten \C^{\infty}(M))^{\g} ~:~ [\pi, a]=0 \in \g \ten \C^{\infty}(M) ~\forall a \in \C^{\infty}(M)\}.
\]
In fact, this set is a model for the space $\cP([M/\g],2)$ of Poisson structures in Definition \ref{poissdefLie} below, there being no automorphisms or higher automorphisms since the DGLA has no terms in non-positive degrees.

A generalisation of this example to Lie pairs is given in \cite{BandieraChenStienonXu}.
Similar expressions hold for Lie algebroids $A$ on $M$, replacing $S^k\g \ten \C^{\infty}(M) $ with $\Gamma(M,S^kA)$.
Specialising to the case where $M$ is a point, the expression above says that $2$-shifted Poisson structures on the NQ-manifold $B\g$ of Examples \ref{exBg} are given by  $ (S^2\g)^{\g}$, the set of quadratic Casimir elements. 
\end{example}

\begin{example}\label{exQuasiLie}
Meanwhile, $1$-shifted Poisson structures on $[M/\g]$ are given by pairs $(\varpi,\phi)$ with 
\[
 \varpi \in (\g \ten T_{\cC^{\infty}(M)}) \oplus   (\g^*\ten \L^2\g \ten \C^{\infty}(M)), \quad \quad \phi \in \L^3\g \ten \C^{\infty}(M)
\]
 satisfying,  for $Q\in \g^*\ten T_{\cC^{\infty}(M)}$ corresponding to the Chevalley--Eilenberg derivative, 
\[
\{Q, \varpi\}=0, \quad \quad \half \{\varpi,\varpi\} + \{Q, \phi\}=0, \quad\quad [\varpi, \phi]=0,
\]
where  $\{-,-\}$ is the shifted Schouten--Nijenhuis bracket. This characterisation also generalises to Lie algebroids. As explained in \cite[Theorem 3.15]{safronovPoissonLie} (which uses the more involved formulation of shifted Poisson structures from \cite{CPTVV}), this structure  is just the same as a quasi-Lie bialgebroid in the sense of \cite[Definition 4.6]{IPLGX} (after \cite[\S 3]{roytenbergQuasiLie}), with $\varpi$ the $2$-differential and $\phi$ its curvature. 

Since the DGLA of $1$-shifted polyvectors is concentrated in non-negative cohomological degrees, the space $\cP([M/\g],1)$ has no higher homotopy groups, but it does have non-trivial fundamental groups at each point.
As in \cite[Definition 3.4 and Theorem 3.15]{safronovPoissonLie}, we can calculate morphisms in the fundamental groupoid via gauge transformations in the DGLA of polyvectors; these are given by elements of degree $0$ in the DGLA, i.e. by twists $\lambda \in (\L^2\g \ten \C^{\infty}(M))$, 
which  send $(\varpi, \phi)$ to 
$(\varpi +\{Q,\lambda\}, \phi + \{\lambda, \varpi\} + \half\{\lambda, \{Q,\lambda\}\})$. 
\end{example}

\begin{example}
A special case of the previous example is given by taking $M$ to be a point. As in  \cite[Theorem 2.8]{safronovPoissonLie}, a $1$-shifted Poisson structure on $B\g$  is then the same as a quasi-Lie bialgebra structure on $\g$, with the condition $Q(\varpi)=0$ giving compatibility of bracket and cobracket while the condition $\half \{\varpi,\varpi\} + Q(\phi)=0$ says that the trivector $\phi$ measures the failure of the cobracket $\varpi$ to satisfy the Jacobi identity. 
\end{example}

For constructions involving functoriality, gluing or descent in \S \ref{descentsn}, we will need to keep track of automorphisms of Poisson structures, including higher automorphisms. To this end, we now define a whole simplicial set (or equivalently, a topological space or $\infty$-groupoid) of Poisson structures.
 As for instance in \cite[\S 8.1]{W}, a simplicial set is a diagram
\[
\xymatrix@1{ Z_0 \ar@{.>}[r]|{\sigma_0}& \ar@<1ex>[l]^{\pd_0} \ar@<-1ex>[l]_{\pd_1} Z_1 \ar@{.>}@<0.75ex>[r] \ar@{.>}@<-0.75ex>[r]  & \ar[l] \ar@/^/@<0.5ex>[l]^{\pd_0} \ar@/_/@<-0.5ex>[l]_{\pd_2} 
Z_2 &  &\ar@/^1pc/[ll]^{\pd_0} \ar@/_1pc/[ll]_{\pd_3} \ar@{}[ll]|{\cdot} \ar@{}@<1ex>[ll]|{\cdot} \ar@{}@<-1ex>[ll]|{\cdot}  Z_3 & \ldots&\ldots,}
\]
of sets, with various relations between the face maps $\pd_i$ and the degeneracy maps $\sigma_i$ such as $\sigma_i\pd_i=\id$. The main motivating examples are given by taking $Z_n$ to be the set of continuous maps to a topological space from the geometric $n$-simplex
\[
|\Delta^n|:=\{ x \in \R_+^{n+1}\,:\, \sum_{i=0}^nx_i=1\}.
\]
Path components of the simplicial set will correspond to equivalence classes of Poisson structures, fundamental groups to automorphisms of Poisson structures up to homotopy equivalence, and higher homotopy groups to  higher automorphisms. Such homotopy groups come from negative degree cohomology in the DGLA of shifted polyvectors, and it is the possible non-triviality of these higher automorphism groups which complicates gluing arguments.

\begin{definition}\label{mcPLdef}
Following \cite{hinstack}, define the Maurer--Cartan space $\mmc(L)$ (a simplicial set) of a nilpotent  DGLA $L$ by
\[
 \mmc(L)_n:= \mc(L\ten_{\Q} \Omega^{\bt}(\Delta^n)),
\]
where 
\[
\Omega^{\bt}(\Delta^n)=\Q[t_0, t_1, \ldots, t_n,d t_0, d t_1, \ldots, d t_n ]/(\sum t_i -1, \sum d t_i)
\]
is the commutative dg algebra of de Rham polynomial forms on the $n$-simplex, with the $t_i$ of degree $0$.
\end{definition}

\begin{definition}
Given an inverse system $L=\{L_{\alpha}\}_{\alpha}$ of nilpotent DGLAs, define
\[
 \mc(L):= \Lim_{\alpha} \mc(L_{\alpha}) \quad  \mmc(L):= \Lim_{\alpha} \mmc(L_{\alpha}).
\]
Note that  $\mc(L)= \mc(\Lim_{\alpha}L_{\alpha})$, but $\mmc(L)\ne \mmc(\Lim_{\alpha}L_{\alpha}) $. 
\end{definition}

The homotopy groups  of this Maurer--Cartan space are closely related to the cohomology of $L$; for details, see Appendix \ref{towersn}. 

\begin{definition}\label{poissdefLie}
Define the space $\cP(X,n)$ of  $n$-shifted Poisson structures on $X$ to be given by the simplicial 
set
\[
 \cP(X,n):= \mmc( \{\Fil^2 \widehat{\Pol}(X,n)^{[n+1]}/\Fil^{i+2}\}_i).
\]
%

For the space $\cP(X,\Pi n)$ of $n$-shifted Poisson structures with reversed parity, replace $\widehat{\Pol}(X,n)^{=}$ with $\widehat{\Pol}(X,\Pi n)^{\ne}$.
\end{definition}

Thus observe that  $n$-shifted Poisson structures are elements of $\cP_0(X,n)$. Since gauge transformations and polynomial de Rham homotopies both give rise to path objects in a suitable model category of pro-nilpotent DGLAs, two Poisson structures define the same class in the  set $\pi_0\cP(X,n)$ of path components if and only if they are equivalent in the sense of Definition \ref{poissdef0Lie}.

\begin{definition}
We say that an  $n$-shifted Poisson structure (resp. $n$-shifted Poisson structure of reversed parity) $\pi = \sum_{i \ge 2}\pi_i $ 
is non-degenerate if  contraction with  $\pi_2 \in \Symm^2_{\cC^{\infty}(X)}(T_{\cC^{\infty}(X)}^{[-n-1]})^{[n+2],=} $ (resp. $\Symm^2_{\cC^{\infty}(X)}(\Pi T_{\cC^{\infty}(X)}^{[-n-1]})^{[n+2],\ne} $)  
induces a quasi-isomorphism 
\begin{align*}
 \pi_2^{\sharp}\co  (\Omega^1_X\ten_{\O_X}\O_X^0)^{[n]} &\to T_X\ten_{\O_X}\O_X^0, \quad \text{resp.}\\
\pi_2^{\sharp}\co  (\Omega^1_X\ten_{\O_X}\O_X^0)^{[n]} &\to \Pi T_X\ten_{\O_X}\O_X^0;
\end{align*}
 since $Q(\pi_2)=0$, these are automatically chain maps by the tensor-Hom adjunction for chain complexes.

Define $\cP(X,n)^{\nondeg}\subset \cP(X,n)$ and  $\cP(X,\Pi n)^{\nondeg}\subset \cP(X,\Pi n)$ to consist of non-degenerate elements --- these are unions of path-components.
\end{definition}

\begin{example}
 In this sense, \cite[Theorem 2.2]{CattaneoFelder} gives a $0$-shifted Poisson structure on the NQ-manifold corresponding to the Lie algebroid given by the conormal bundle on a co-isotropic submanifold. More examples of this form are given in \cite[\S 7.2]{PymSafronov}, since $(n+1)$-shifted Lagrangians (see \S \ref{Lagsn}) are $n$-shifted Poisson, by \cite[Theorem 4.23]{MelaniSafronovII}.
\end{example}


\subsubsection{Equivalence of non-degenerate Poisson and symplectic structures}\label{compsnLie}

We can regard $\Fil^2\DR(X)^{[n+1],=}$ and $\Fil^2\DR(X)^{[n+1],\ne}$  as  filtered  DGLAs with trivial bracket. Such abelian DGLAs $A$ have the property that $\mc(A)= \z^{1}A$.   The following definition thus generalises Definition \ref{presymplecticdefLie}

\begin{definition}\label{PreSpdefLie}
Define the spaces of $n$-shifted pre-symplectic structures  and of parity-reversed  $n$-shifted pre-symplectic structures
 on a super NQ-manifold  $X$ by
\begin{align*}
\PreSp(X,n)&:= \mmc( \{\Fil^2\DR(X)^{[n+1],=}/\Fil^{i+2}\}_i) \\
\PreSp(X,\Pi n)&:= \mmc( \{\Fil^2\DR(X)^{[n+1],\ne}/\Fil^{i+2}\}_i). 
\end{align*}
Set $\Sp(X,n) \subset \PreSp(X,n)$ and $\Sp(X,\Pi n) \subset \PreSp(X,\Pi n)$ to consist of the symplectic structures --- these subspaces are  unions of path-components.
\end{definition}

Note that the spaces $\PreSp(X,n)$ and $\PreSp(X,\Pi n)$ are canonically weakly  equivalent to Dold--Kan denormalisations of good truncations of the equal and unequal parity summands of $\Fil^2\DR$, so their homotopy groups are  just given by 
\begin{align*}
\pi_i\PreSp(X,n)&\cong   \H^{n+2-i}\Fil^2\DR(X)^{=}\\ 
\pi_i\PreSp(X,\Pi n)&\cong \H^{n+2-i}\Fil^2\DR(X)^{\ne}. 
\end{align*}


\begin{theorem}\label{compatthmLie}
For a super NQ-manifold $X$, there are canonical weak equivalences
\[
 \Sp(X,n) \simeq \cP(X,n)^{\nondeg}\quad \quad \Sp(X,\Pi n) \simeq \cP(X,\Pi n)^{\nondeg}
\]
of simplicial sets.

In particular,  the sets of equivalence classes of (parity-reversed) $n$-shifted symplectic structures and of (parity-reversed) non-degenerate $n$-shifted Poisson structures on $X$ are isomorphic. 
 \end{theorem}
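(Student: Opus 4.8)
The strategy, adapting the proof of the analogous statement in \cite{poisson} (cf.\ also the argument of \cite{CPTVV} in the algebro-geometric setting), is to introduce an intermediate space $\Comp(X,n)$ of \emph{compatible pairs} and to factor the comparison as
\[
 \cP(X,n)^{\nondeg} \xla{p} \Comp(X,n) \xra{q} \Sp(X,n),
\]
showing that both maps are weak equivalences. A point of $\Comp(X,n)$ should consist of an $n$-shifted Poisson structure $\pi$, an $n$-shifted pre-symplectic structure $\omega$, and higher coherence data exhibiting $\omega$ as an ``inverse'' of $\pi$. Concretely, one constructs a tower of nilpotent DGLAs, built from $\Fil^2\widehat{\Pol}(X,n)^{[n+1],=}$ and the abelian DGLA $\Fil^2\DR(X)^{[n+1],=}$, whose Maurer--Cartan elements are such triples; the essential mechanism is that a Poisson structure $\pi$ twists the differential on $\widehat{\Pol}(X,n)$ to $[Q+\pi,-]$ and so produces, via contraction against $\pi$, a filtered morphism $\mu(\pi)\co \DR(X) \to \widehat{\Pol}(X,n)$, with $\omega$ compatible with $\pi$ precisely when $\mu(\pi)(\omega)$ agrees with $\pi$ up to the prescribed homotopies. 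Applying $\mmc$ to this tower defines $\Comp(X,n)$ together with its two forgetful projections $p$ and $q$, and there is an identical construction in the parity-reversed case.

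The first step is to show that $p$ is a weak equivalence onto $\cP(X,n)^{\nondeg}$. Since $p$ arises from a surjection of nilpotent DGLAs it is a fibration of Maurer--Cartan spaces, so it suffices to analyse the homotopy fibre over a Poisson structure $\pi$, i.e.\ the space of compatibilising data for $\pi$. Filtering by $\Fil$ and running the usual obstruction theory, the choices and obstructions at the $i$-th stage are controlled by cohomology groups of complexes assembled from the map $\pi_2^{\sharp}$ on $\Symm^i$-pieces of the (co)tangent complexes; non-degeneracy of $\pi$ makes $\pi_2^{\sharp}$ a quasi-isomorphism, so these groups vanish, the fibre is contractible, and conversely the fibre is empty when $\pi$ is degenerate, since a compatible $\omega$ would force $\omega_2^{\sharp}$, hence $\pi_2^{\sharp}$, to be a quasi-isomorphism. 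The point requiring care here --- flagged in the Remarks following Definition \ref{NQsympdef} --- is that one needs the degree-$2$ quasi-isomorphism to induce quasi-isomorphisms on \emph{all} symmetric and tensor powers; this is why the non-degeneracy condition is phrased after applying $-\ten_{\O_X}\O_X^0$, so that $T_X\ten_{\O_X}\O_X^0$ and $\Omega^1_X\ten_{\O_X}\O_X^0$ become bounded complexes of finitely generated projective (hence flat) $\cC^{\infty}(X_0^{=})$-modules, on which $\Symm^i$ and $\ten$ preserve quasi-isomorphisms. One must also check that this restricted non-degeneracy genuinely controls the pro-nilpotent DGLA $\Fil^2\widehat{\Pol}(X,n)^{[n+1],=}$, using the local semi-freeness of $\O_X$ over $\O_X^0$.

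The second step is that $q$ is a weak equivalence onto $\Sp(X,n)$; this is formally similar but simpler, since $\Fil^2\DR(X)^{[n+1],=}$ is abelian and $\PreSp(X,n)$ is the Dold--Kan denormalisation recalled before the theorem, so the homotopy fibre of $q$ over a pre-symplectic $\omega$ is again governed by obstruction groups that vanish exactly when $\omega_2^{\sharp}$ is a quasi-isomorphism, i.e.\ when $\omega$ is symplectic, and is empty otherwise. Because the compatibility data identify $\pi_2^{\sharp}$ with a homotopy inverse of $\omega_2^{\sharp}$, the maps $p$ and $q$ land in, and restrict to equivalences over, the \emph{same} union of path components of $\Comp(X,n)$, so composing gives the asserted weak equivalence $\Sp(X,n) \simeq \cP(X,n)^{\nondeg}$; the parity-reversed statement follows verbatim with $\Pi$ inserted throughout. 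The final clause of the theorem is then immediate, since a weak equivalence induces a bijection on $\pi_0$ and $\pi_0$ of these Maurer--Cartan spaces computes the stated sets of equivalence classes by the remarks following Definitions \ref{poissdef0Lie}, \ref{poissdefLie} and \ref{PreSpdefLie}.

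I expect the main obstacle to be the contractibility-of-fibres argument in the first step, and in particular the NQ-manifold--specific subtlety that a degree-$2$ quasi-isomorphism must be propagated to all symmetric powers of the tangent and cotangent complexes (so that the associated graded pieces of the compatibility tower are acyclic); getting the bookkeeping of the several gradings and parities right in the definition of the compatible-pairs DGLA is the other place where care is needed.
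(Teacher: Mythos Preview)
Your proposal is essentially the same approach as the paper's (both adapt \cite{poisson}), and the overall architecture --- compatible pairs, the twisted polyvector complex, obstruction theory on the filtration $\Fil$ --- is right. One correction, however: the compatibility condition is not that $\mu(\pi)(\omega)$ agree with $\pi$, but with the canonical element $\sigma(\pi):=\sum_{i\ge 2}(i-1)\pi_i$; this is the tangent vector to the space of Poisson structures coming from the $\bG_m$-action rescaling the bracket, and the symplectic form associated to a non-degenerate $\pi$ is $\mu(\pi,-)^{-1}\sigma(\pi)$, not $\mu(\pi,-)^{-1}\pi$. Without $\sigma$ the inductive step in the obstruction argument does not close up correctly (the weights on each $\Fil$-graded piece are off by one), so you should make this explicit when you write out the compatible-pairs DGLA.
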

\begin{proof}
The proof of \cite[Corollary \ref{poisson-compatcor2}]{poisson} adapts, \emph{mutatis mutandis}. We now outline the main steps. The passage from non-degenerate Poisson structures to symplectic structures proceeds along the lines of the more specific cases considered in \cite[Proposition 6.4]{KosmannSchwarzbachMagriPN}, \cite[Proposition 2 and Theorem 2]{KhudaverdianVoronov} and \cite[Proposition 5.2.2]{bruceGeomObjects}. However, in those cases, the differential is $0$, so homotopical considerations do not arise. Other related constructions can be found in \cite{KosmannSchwarzbachLaurentGengoux,KosmannSchwarzbachDerivedBrackets}.

Each Poisson structure $\pi \in \cP(X,n)$ (resp. $\pi \in \cP(X,\Pi n)$) gives rise to a Poisson cohomology complex
\[
 \widehat{\Pol}_{\pi}(X,n) \quad\text{ (resp. }\quad \widehat{\Pol}_{\pi}(X,\Pi n)\text{)},
\]
defined as the super cochain complex  given by the derivation  $Q+[\pi,-]$ acting on $\widehat{\Pol}(X,n) $ (resp. $\widehat{\Pol}(X,\Pi n) $). There is also a canonical element $\sigma(\pi) \in \z^{n+2}\widehat{\Pol}_{\pi}(X,n)^{=}$ (resp. $\z^{n+2}\widehat{\Pol}_{\pi}(X,\Pi n)^{\ne}$) given by 
\[
 \sigma(\pi)= \sum_{i \ge 2}  (i-1)\pi_i,
\]
for $\pi_i \in \Symm^iT$.

The key construction is then given by the ``compatibility map''  
\begin{align*}
 \mu(-,\pi) \co \DR(X) &\to \widehat{\Pol}_{\pi}(X,n) \quad\text{ (resp. }\quad \widehat{\Pol}_{\pi}(X,\Pi n)\text{)}\\
a df_1 \wedge \ldots df_p &\mapsto a[\pi,f_1]\ldots [\pi,f_p]
\end{align*}
of filtered super cochain complexes. When $\pi$ is non-degenerate, this map is necessarily a quasi-isomorphism, and the symplectic structure associated to $\pi$ is given by 
\[
 \mu(-,\pi)^{-1}\sigma(\pi) \in \H^{n+2}\Fil^2\DR(X)^{=} \quad\text{ (resp. }\quad \H^{n+2}\Fil^2\DR(X)^{\ne}\text{)}. 
\]
In fact, \cite{KhudaverdianVoronov} observe that the inverse map $\mu(\pi,-)^{-1}$ is a Legendre transform. 

Establishing that this gives an equivalence between symplectic and Poisson structures relies on obstruction theory associated to filtered DGLAs, building the Poisson form $\pi=\pi_2+\pi_3 + \ldots$ inductively from the  symplectic form $\omega=\omega_2+\omega_3 + \ldots$ by solving the equation $\mu(\omega, \pi)\simeq \sigma(\pi)$ uniquely up to coherent homotopy. Formally, the construction gives weak equivalences $\Sp(X,n) \la \Comp(X,n)^{\nondeg} \to \cP(X,n)^{\nondeg}$, where  $\Comp(X,n)^{\nondeg}$ is the space consisting of triples $(\omega, \pi,h)$, with $\omega \in \Sp(X,n)$, $\pi \in \cP(X,n)^{\nondeg}$ and $h$ a homotopy between $\mu(\omega,\pi)$ and $\sigma(\pi)$; for a readable summary of the argument from \cite{poisson}, see \cite[\S 2.5]{safronovPoissonLectures}.
%
\end{proof}
 
\begin{examples}\label{strictcompex}
 Note that if an $n$-shifted symplectic structure $\omega$ consists of just a $2$-form (i.e.  $\omega = \omega_2$) and if an $n$-shifted Poisson structure $\pi$ consists of just a bivector (i.e.  $\pi = \pi_2$), then the equation $\mu(\omega, \pi)\simeq \sigma(\pi)$ is equivalent to $\pi^{\sharp} \circ \omega^{\flat} \circ \pi^{\sharp} \simeq \pi^{\sharp}$. 
 
 In particular, if $\omega = \omega_2$ and $\omega_2^{\flat}$ is an isomorphism (not just a quasi-isomorphism), then the corresponding non-degenerate $n$-shifted Poisson structure $\pi=\pi_2$ is determined by setting $\pi_2^{\sharp}:=(\omega^{\flat})^{-1}$. The theorem implies that any other  solution $\pi'$ of the equation $\mu(\omega, \pi')\simeq \sigma(\pi')$  is homotopic to $\pi$, with that homotopy being unique up to a contractible space of choices.

 At the other extreme, the shifted symplectic structure $0$ on a tangent Lie algebroid as in Examples \ref{exBg} corresponds to the  shifted Poisson structure $0$, since $\mu(0,0)=0=\sigma(0)$, which is also non-degenerate because the cotangent complex is acyclic.
 \end{examples}

\subsection{Shifted Poisson structures on super dg NQ-manifolds}\label{poisssndLie}

The formulation of shifted Poisson structures for super dg NQ-manifolds follows along the same lines as the construction for stacky CDGAs in \cite[\S \ref{poisson-Artinsn}]{poisson}. The main subtlety is to combine the two gradings in an effective way. In this section, we fix a super dg NQ-manifold $X=(X^{0,=}_0,\O_X)$.

The definition of an $n$-shifted Poisson structure on $X$ is fairly obvious: it is a Lie bracket of total cochain degree $-n$ on $\O_X$,
 or rather an $L_{\infty}$-structure in the form of a  sequence $[-]_m$ of $m$-ary operations of cochain degree $1-(n+1)(m-1)$. However, the precise formulation  (Definition \ref{bipoldef}) is quite subtle, involving  lower bounds on the cochain degrees of the operations. See \S \ref{filtsn} for a little more detail on that perspective.

\begin{definition}\label{hatTotdef}
 Given a chain cochain complex $V$, define the sum-product cochain complex $\hatTot V \subset \Tot^{\sqcap}V$ by
\[
(\hatTot V)^m := (\bigoplus_{i < 0} V^i_{i-m}) \oplus (\prod_{i\ge 0}   V^i_{i-m})
\]
with differential $Q \pm \delta$.
\end{definition}

An alternative description of $\hatTot V$ is as the completion of $\Tot V$ with respect to the filtration $ \{\Tot \sigma^{\ge m }V\}_m$, where $\sigma^{\ge m}$ denotes brutal truncation in the cochain direction. In fact, we can write
\[
 \Lim_m \LLim_n \Tot( (\sigma^{\ge - n }V)/(\sigma^{\ge m }V)) = \hatTot V =  \LLim_n \Lim_m\Tot( (\sigma^{\ge -n }V)/(\sigma^{\ge m }V)).
\]
 The latter description also shows that there is a canonical map $(\hatTot U)\ten( \hatTot V) \to \hatTot (U \ten V)$ --- the same is not true of the product total complex $\Tot^{\sqcap}$ in general.

Unlike $\Tot$ and $\Tot^{\sqcap}$, the sum-product total complex always sends levelwise $\delta$-quasi-isomorphisms to quasi-isomorphisms.
 
%


\begin{definition}\label{bipoldef}
Given a  super dg NQ-manifold $X$, define the super cochain complex of $n$-shifted polyvector fields (resp. $n$-shifted polyvector fields of reversed parity) on $X$ respectively by
\begin{align*}
 \widehat{\Pol}(X,n)&:= \prod_{j \ge 0}  \hatTot  \Symm_{\cC^{\infty}(X)}^j(T_{\cC^{\infty}(X)}^{[-n-1]}), 
 \\
\widehat{\Pol}(X,\Pi n)&:= \prod_{j \ge 0}  \hatTot  \Symm_{\cC^{\infty}(X)}^j(\Pi T_{\cC^{\infty}(X)}^{[-n-1]}).  
\end{align*}
These have  filtrations  by super cochain complexes
\begin{align*}
\Fil^p\widehat{\Pol}(X,n)&:= \prod_{j \ge p} \hatTot  \Symm_{\cC^{\infty}(X)}^j(T_{\cC^{\infty}(X)}^{[-n-1]}),
 \\
\Fil^p\widehat{\Pol}(X,\Pi n)&:= \prod_{j \ge p} \hatTot  \Symm_{\cC^{\infty}(X)}^j(\Pi T_{\cC^{\infty}(X)}^{[-n-1]})
\end{align*}
respectively,
with $[\Fil^i,\Fil^j] \subset \Fil^{i+j-1}$ and $\Fil^i \Fil^j \subset \Fil^{i+j}$, where the commutative product and Schouten--Nijenhuis bracket are defined as before.
\end{definition}
Note that for our hypotheses on dg NQ-manifolds, the double complex $T_{\cC^{\infty}}(X)$ is bounded below in the cochain direction, so we could just write the total complexes $\hatTot$ as product total complexes $\Tot^{\sqcap}$, which would not work for the more general objects of Remark \ref{htpyfdrmkd2}.

We now define the spaces $\cP(X,n), \cP(X, \Pi n)$ of Poisson structures  by the formulae of Definition \ref{poissdefLie}.

\begin{definition}\label{binondegdef}
We say that a Poisson structure $\pi \in  \cP(X,n)$ 
is non-degenerate if   the map
\[
 \pi_2^{\sharp}\co \Tot^{\sqcap} (\Omega_X^1\ten_{\O_X}\O_X^0)^{[n]} \to \Tot^{\sqcap} (T_X\ten_{\O_X}\O_X^0)
\]
defined by contraction is a quasi-isomorphism. 
\end{definition}

The definitions of shifted symplectic structures from \S \ref{poisssnLie} now carry over:

\begin{definition}\label{PreSpdefb}
 Define the space $\PreSp(X,n)$ of $n$-shifted pre-symplectic structures on $X$ by regarding the de Rham complex of Definition \ref{DRdefderivedLie} as an abelian filtered DGLA, and
 writing 
\begin{align*}
\PreSp(X,n)&:= \Lim_i\mmc( \Fil^2\DR(X)^{[n+1],=}/\Fil^{i+2}) \\
\PreSp(X,\Pi n)&:= \Lim_i\mmc( \Fil^2\DR(X)^{[n+1],\ne}/\Fil^{i+2}). 
\end{align*}

Let $\Sp(X,n) \subset \PreSp(X,n)$ (resp.  $\Sp(X,\Pi n) \subset \PreSp(X,\Pi n)$)  consist of the symplectic structures in the sense of Definition \ref{symplecticdefdgLie} --- this is a union of path-components.
\end{definition}

\begin{theorem}\label{compatthmdgLie}
For a super dg NQ-manifold $X$, there are canonical weak equivalences
\[
 \Sp(X,n) \simeq \cP(X,n)^{\nondeg}\quad \quad \Sp(X,\Pi n) \simeq \cP(X,\Pi n)^{\nondeg}
\]
of simplicial sets.

In particular,  the sets of equivalence classes of (parity-reversed) $n$-shifted symplectic structures and of (parity-reversed) non-degenerate $n$-shifted Poisson structures on $X$ are isomorphic. 
 \end{theorem}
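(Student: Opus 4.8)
The plan is to follow the strategy of Theorem \ref{compatthmLie}, itself modelled on \cite[Corollary \ref{poisson-compatcor2}]{poisson}, but now being careful about the interaction of the chain and cochain gradings through the completed total complex functor $\hat{\Tot}$. First I would, given a Poisson structure $\pi = \sum_{i\ge 2}\pi_i \in \cP(X,n)$, form the twisted Poisson cohomology complex $\widehat{\Pol}_{\pi}(X,n)$, obtained by replacing the differential $Q\pm\delta$ on $\widehat{\Pol}(X,n)$ with $Q\pm\delta + [\pi,-]$; the filtration $\Fil$ is preserved because $[\Fil^i,\Fil^j]\subset\Fil^{i+j-1}$ and $\pi\in\Fil^2$. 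I would then introduce the canonical element $\sigma(\pi) = \sum_{i\ge 2}(i-1)\pi_i$, which one checks is a cocycle in $\z^{n+2}\widehat{\Pol}_{\pi}(X,n)^{[n+1],=}$ (this is a formal consequence of the Maurer--Cartan equation, and the computation is identical to the one in \cite{poisson}), and the compatibility map
\[
 \mu(-,\pi)\co \DR(X)\to \widehat{\Pol}_{\pi}(X,n), \qquad a\, df_1\wedge\dots\wedge df_p \mapsto a[\pi,f_1]\cdots[\pi,f_p],
\]
a morphism of filtered complexes, where the target is interpreted using the $\hat{\Tot}$ construction so that the infinite products arising from $[\pi,f]= \sum_i [\pi_i,f]$ are convergent --- here the canonical pairing $(\hat{\Tot} U)\ten(\hat{\Tot} V)\to\hat{\Tot}(U\ten V)$ is exactly what makes $\mu$ well-defined, which is the reason the definitions were phrased with $\hat{\Tot}$ rather than $\Tot^{\Pi}$.

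Next I would verify that non-degeneracy of $\pi$ in the sense of Definition \ref{binondegdef} implies that $\mu(-,\pi)$ is a filtered quasi-isomorphism. On the associated graded pieces $\gr^p$ the map $\mu$ is, up to the usual shifts, the $p$-th symmetric (or exterior) power of $\pi_2^{\sharp}\co\Tot^{\Pi}(\Omega^1_X\ten_{\O_X}\O_X^0)^{[n]}\to T_X\ten_{\O_X}\O_X^0$, suitably completed; since $\pi_2^{\sharp}$ is a quasi-isomorphism between complexes which, after $\ten_{\O_X}\O_X^0$, are perfect (this is where the finiteness hypotheses in Definition \ref{derivedsuperlie} enter, guaranteeing that symmetric powers and completed tensor products behave well and commute with taking homology), each $\gr^p\mu$ is a quasi-isomorphism, and a standard complete-filtration spectral sequence argument (using that both sides are complete for $\Fil$, and that on each $\Fil^p/\Fil^q$ the filtration is finite) gives that $\mu$ itself is a quasi-isomorphism. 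Transporting $\sigma(\pi)$ across this quasi-isomorphism yields a class in $\H^{n+2}\Fil^2\DR(X)^{[n+1],=}$, hence a pre-symplectic structure; unwinding $\gr^2\mu$ shows its leading term is $\pi_2^{\sharp}$-dual to $\pi_2$, so it is in fact symplectic, and one obtains a map $\cP(X,n)^{\nondeg}\to\Sp(X,n)$ (and likewise in the parity-reversed case, replacing $\widehat{\Pol}(X,n)^{=}$ by $\widehat{\Pol}(X,\Pi n)^{\ne}$ throughout and inserting the parity-reversion $\Pi$ in the non-degeneracy isomorphism).

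To promote this to a weak equivalence of simplicial sets rather than merely a bijection on $\pi_0$, I would set up both sides as homotopy limits of towers $\{\Fil^2/\Fil^{i+2}\}_i$ of pro-nilpotent DGLAs (abelian on the symplectic side) and argue inductively up the tower: at each stage the fibre of $\mmc(\Fil^2\widehat{\Pol}/\Fil^{i+3})\to\mmc(\Fil^2\widehat{\Pol}/\Fil^{i+2})$ is controlled by the abelian DGLA $\gr^{i+2}$, and the compatibility map identifies these fibres with the corresponding ones on the de Rham side, so one concludes by obstruction theory for towers of DGLAs exactly as in \cite{poisson} (a readable account is in \cite[\S 2.5]{safronovPoissonLectures}). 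The reverse direction --- building $\pi = \pi_2 + \pi_3 + \cdots$ inductively from $\omega = \omega_2 + \omega_3 + \cdots$ by solving $\mu(\omega,\pi)\simeq\sigma(\pi)$ up to coherent homotopy --- is formally identical. \textbf{The main obstacle} I anticipate is purely bookkeeping rather than conceptual: one must check that the $\hat{\Tot}$ completions, the bigrading shifts $(\Omega^i_X)^{n+2-i+j}_j$, and the Schouten--Nijenhuis bracket degrees all line up so that $\sigma(\pi)$ lands in the correct total cochain degree $n+2$ with equal parity, and that passing to associated graded genuinely commutes with $\hat{\Tot}$ and with $\Symm^j$ on the perfect complexes in play; getting the convergence of the filtration spectral sequence right in the presence of the completed (rather than honest) total complex is the one place where the argument is not a verbatim transcription of the algebraic case.
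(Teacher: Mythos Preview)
Your proposal is correct and follows essentially the same approach as the paper's own proof, which simply says that the argument of Theorem \ref{compatthmLie} adapts verbatim once one notes that the good behaviour of $\hat{\Tot}$ with respect to tensor products makes the compatibility map $\mu(-,\pi)$ well-defined, and that the non-degeneracy condition is designed precisely so that $\mu(-,\pi)$ is a filtered quasi-isomorphism. You have unpacked this sketch accurately, including the role of the canonical element $\sigma(\pi)$ and the inductive obstruction-theoretic argument; the only slip is notational (your $\sigma(\pi)$ should lie in $\z^{n+2}\widehat{\Pol}_{\pi}(X,n)^{=}$, not $\z^{n+2}\widehat{\Pol}_{\pi}(X,n)^{[n+1],=}$).
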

\begin{proof}
 This proof follows along the lines of Theorem \ref{compatthmLie},  with constructions adapted from \cite[\S \ref{poisson-Artinsn}]{poisson}. The space of Poisson structures still has a canonical tangent vector $\sigma$, and good properties of $\hatTot$ with respect to tensor products ensure that each Poisson structure $\pi$ gives  a compatibility map $\mu(-,\pi)$ from de Rham cohomology to Poisson cohomology. The non-degeneracy condition is formulated to ensure that this is a quasi-isomorphism, and the proof of Theorem \ref{compatthmLie} then adapts verbatim.
\end{proof}

\begin{example}
 As in Examples
\ref{strictcompex}, for shifted symplectic structures  with $\omega=\omega_2$ and $\omega_2^{\flat}$ an isomorphism, the theorem gives the corresponding non-degenerate shifted Poisson structure as $\pi=\pi_2$ with $\pi_2^{\sharp}= (\omega_2^{\flat})^{-1}$. 

Beware that such strict non-degeneracy conditions for shifted symplectic and Poisson structures are not preserved by the functoriality of  \S \ref{descentsn}, and in particular not preserved by levelwise $\delta$-quasi-isomorphisms.
\end{example}

\section{Deformation quantisation}\label{quantnsn}

We now consider quantisation for $n$-shifted symplectic and Poisson structures,  which for us means  an algebraic structure over the power series ring with formal variable $\hbar$ (of bidegree $0$ and even parity) which recovers the algebra of functions on setting $\hbar \mapsto 0$, and endows it with a Poisson structure.\footnote{Most of the results in this section have analogues with $\hbar$ of some other chain degree $r$, by inserting appropriate shifts into the relevant constructions.  In that case, analogues of $BD_k$ quantisation arise whose classical limits are $(k-1-r)$-shifted Poisson structures.} In this section, we will not consider parity-reversed Poisson structures, since they have no non-trivial notion of quantisation in that sense. 

For $k \ge 1$, the natural notion of quantisation for $P_k$-algebras   is given by $E_k$-algebras. An $E_k$-algebra can be thought of as a cochain complex with $k$ associative multiplications which commute with each other up to homotopy, and the commutators then give rise to a Lie bracket of cochain degree $1-k$. For instance, $k=2$ can  be modelled by brace algebras, since by \cite[Theorem 1.1]{McClureSmithDeligneConj}, the brace operad $\Br$ is naturally  $\Q$-linearly quasi-isomorphic to rational chains $\CC_{\bt}(E_k,\Q)$ on the $E_2$ operad.

The $k=1$ case was formulated  first, and corresponds to  associative deformations of a commutative algebra, recovering the Poisson bracket from the commutator. The motivation for the other cases come from Dunn additivity, because homotopically an $E_k$-algebra can be regarded as an $E_{k-1}$-algebra in $E_1$-algebras, and the additivity property for Poisson operads of \cite[Theorem 3.4.1]{CPTVV} (due to Rozenblyum and stated there without proof) or \cite{safronovBraces} allows us to regard a $P_k$-algebra as an $E_{k-1}$-algebra in $P_1$-algebras.

More specifically, deformation quantisations are formulated in terms of the $BD_k$ operads. The $BD_1$ operad  of \cite[Definition 13.2.2]{costelloNotesSupersymm} (due to Ed Segal) is given by the  Rees construction of the smallest filtration on the associative operad $\Ass$ for which the multiplication has weight $0$ and the commutator bracket weight $-1$, so $BD_1/\hbar \cong \Com$ and $BD_1[\hbar^{-1}] \cong \Ass(\!(\hbar)\!)$.  A $BD_1$-algebra structure on a complete flat $\R\brh$-module concentrated in degree $0$ is precisely an associative algebra structure which is commutative modulo $\hbar$. The $BD_k$ operads of \cite[\S 3.5.1]{CPTVV} are defined similarly, in terms of the good truncation filtration on $\CC_{\bt}(E_k,\Q)$.

However, for $k \ge 2$ Kontsevich formality \cite[Theorem 2]{KontsevichOperads} gives an equivalence (for any choice of Drinfeld associator) between $E_k$-algebras and $P_k$-algebras, and hence between $BD_k$-algebras and $P_k\brh$-algebras, so quantisations of $n$-shifted Poisson structures automatically exist for all $n\ge 1$ and are in one-to-one correspondence with deformations of the Poisson structure over $\R\brh$.

We will focus here  on the non-trivial (given an operadic formality equivalence) cases of $0$-shifted and $(-1)$-shifted symplectic structures, with  quantisations of $P_1$-algebras being given by $E_1$ (i.e. $A_{\infty}$)-algebras, and quantisations  of $P_0$-algebras being given by $BV_{\infty}$-algebras. These will satisfy constraints making them quasi-isomorphic respectively to $BD_1$-algebras and the $BD_0$-algebras of \cite[Definition 2.2.5]{gwilliamThesis};  $BD_k$-algebras correspond to $E_k$-algebras in $BD_0$-algebras \cite[Theorem 5.17]{KarlssonKellerMuellerPulmann}. The situation for $(-2)$-shifted structures is even more subtle, deforming just Maurer--Cartan elements of a BV algebra (i.e. QME solutions) rather than algebraic structures.

\subsection{Quantisation of \texorpdfstring{$0$}{0}-shifted Poisson structures}

\subsubsection{Polydifferential operators}

\begin{definition}\label{diffopsdef}
Given a super dg NQ-manifold $X$, 
we write $\cD_{\C^{\infty}(X)} \subset \HHom_{\R}(\C^{\infty}(X),\C^{\infty}(X))$ for the super chain cochain complex of $\C^{\infty}$ differential operators. This consists of homomorphisms which can be written locally as the $\O_X$-linear span of
\[
\pd_{i_1, \ldots, i_m}:= \pd_{x_{i_1}}\ldots \pd_{x_{i_m}}
\]
for homogeneous co-ordinates $x_i \in \O_X$. We denote by $F_k\cD_{\C^{\infty}(X)} \subset \cD_{\C^{\infty}(X)} $ the space of differential operators of order $\le k$, i.e. the span of $ \{\pd_{i_1, \ldots, i_m}~:~\sum i_r\le k\}$. 

Given  $\C^{\infty}$-modules $M,N$ in  super chain cochain complexes, we write $F_k\cDiff_{\C^{\infty}(X)}(M,N)$ for the space of $\C^{\infty}$ differential operators from $M$ to $N$ of order $\le k$, i.e.
\[
 F_k\cDiff_{\C^{\infty}(X)}(M,N):= \HHom_{\C^{\infty}(X)}(M, N\ten_{\C^{\infty}(X)}F_k\cD_{\C^{\infty}(X)}),
\]
where $\HHom$ is taken with respect to the right $\C^{\infty}(X)$-module structure on the target. We then write $\cDiff_{\C^{\infty}(X)}(M,N):=\LLim_k F_k\cDiff_{\C^{\infty}(X)}(M,N)$.
 \end{definition}

\begin{remark}\label{diffopsrmk}
For  co-ordinate-free descriptions of $\cD_{\C^{\infty}(X)}$ and $\cDiff_{\C^{\infty}(X)}(M,N)$, we may adapt the standard algebraic descriptions. 
 The super chain cochain algebra $\C^{\infty}(X \by X)$ of Definition \ref{productdef} has a left  $\C^{\infty}(X)$-module structure coming from the projection map $X^n \by X \to X$ to the first factor, and a morphism $\Delta^{\sharp} \co \C^{\infty}(X \by X) \to \C^{\infty}(X)$ coming from the diagonal embedding. If we write $I:=\ker (\Delta^{\sharp})$, then we just have
\begin{align*}
 F_k\cD_{\C^{\infty}(X)} &\cong  \HHom_{\C^{\infty}(X)}( \C^{\infty}(X \by X)/(\overbrace{I \cdots I}^{k+1}) ,\C^{\infty}(X)),\\
F_k\cDiff_{\C^{\infty}(X)}(M,N)&\cong  \HHom_{\C^{\infty}(X)}( M\ten_{\C^{\infty}(X)} \C^{\infty}(X \by X)/(\overbrace{I \cdots I}^{k+1}) ,N).
\end{align*}

Alternatively, we can describe $\C^{\infty}$-differential operators as algebraic differential operators with additional conditions. If for $a \in \C^{\infty}(X)$ and $\theta \in \HHom_{\R}(M,N)$, we write $\ad_a(\theta)$ for the commutator $ a \circ \theta \mp  \theta \circ a$, 
then algebraic differential operators from $M$ to $N$ of order $\le k$  are elements  $\theta \in \HHom_{\R}(M,N)$ satisfying 
\[
 \ad_{a_0}(\ad_{a_1}(\ldots (\ad_{a_k}(\theta))\ldots))=0
\]
for all $(k+1)$-tuples $(a_0, a_1, \ldots, a_k) \in \C^{\infty}(X)^{k+1}$. To be a $\C^{\infty}$-differential operator, $\theta$ must also satisfy a generalisation of the condition of  \cite[\S 5.3]{joyceAGCinfty} for $\C^{\infty}$ $1$-forms, namely that for all $v \in M$, $f \in \C^{\infty}(\R^m)$ and $a_1, \ldots, a_m \in \C^{\infty}(X)^{0,=}_0 $, we have a generalised Taylor series expansion 
\[
 \theta(f(a_1, \ldots,a_n)v)= \sum_{I=(i_1, \ldots,i_n)} \frac{1}{i_1!\ldots i_n!} \frac{\pd^I f}{\pd x^I}(a_1, \ldots,a_n) \ad_{a_1}^{i_1}(\ldots (\ad_{a_n}^{i_n}(\theta))\ldots)(v)
\]
(the sum is necessarily finite, since only terms with $|I|\le k$ contribute when $\theta$ has order $\le k$).
\end{remark}

We now generalise the complex of polydifferential operators from \cite[4.6.1]{kontsevichPoisson}.

\begin{definition}\label{HHdefa} 
Given a super  dg NQ-manifold $X$, define the  super chain cochain complex $\cD^{\poly}(X)$ of polydifferential operators in terms of the products $X^n:= \overbrace{X \by \ldots \by X}^n$ by
\[
\cD^{\poly}(X)_{\#}:= \prod_{n\ge 0} \cDiff_{\C^{\infty}(X^n)}(\C^{\infty}(X^n), \C^{\infty}(X))_{[n]}, 
\]
with cochain differential $Q$ and chain  differential $\delta \pm b$, for the  Hochschild differential $b$ determined by
\begin{align*}
 (b f)(a_1, \ldots , a_n) = &a_1 f(a_2, \ldots, a_n)\\
 &+ \sum_{i=1}^{n-1}(-1)^i f(a_1, \ldots, a_{i-1}, a_ia_{i+1}, a_{i+2}, \ldots, a_n)\\
&+ (-1)^n f(a_1, \ldots, a_{n-1})a_n.
\end{align*} 

We define an increasing filtration $\tau^{HH}$ on $\cD^{\poly}(X)$ 
 by good truncation in the Hochschild direction, so $\tau^{HH}_p \cD^{\poly}(X)   \subset \cD^{\poly}(X)$ is the subspace
\begin{align*}
&\prod_{n=0}^{p-1} \cDiff_{\C^{\infty}(X^n)}(\C^{\infty}(X^n), \C^{\infty}(X))_{[n]}  \\
&\by \ker(b \co \cDiff_{\C^{\infty}(X^p)}(\C^{\infty}(X^p), \C^{\infty}(X))\to \cDiff_{\C^{\infty}(X^{p+1})}(\C^{\infty}(X^{p+1}), \C^{\infty}(X))   )_{[p]}.
\end{align*}
\end{definition}

Writing $\Br$ for the brace operad, regarded as an operad in chain complexes,   $\cD^{\poly}(X)$ is then naturally a $\Br$-algebra (i.e. a homotopy G-algebra in the sense of \cite[Remark 8]{voronovHtpyGerstenhaber}), with operations defined as in \cite[\S 3.1]{voronovHtpyGerstenhaber}, but incorporating Koszul signs. In other words, it has
 a cup product in the form of
a  map
\[
\cD^{\poly}(X) \ten  \cD^{\poly}(X)\xra{\smile} \cD^{\poly}(X),
\]
 of super cochain chain complexes,  and braces in the form of  maps
\[
 \{-\}\{-,\ldots,-\}_r \co \cD^{\poly}(X) \ten \cD^{\poly}(X)^{\ten r}\to \cD^{\poly}(X)_{[r]}
\]
of super cochain complexes, satisfying the conditions of \cite[\S 3.2]{voronovHtpyGerstenhaber} with respect to the chain  differential $b$. The commutator of the brace $\{-\}\{-\}_1$ is a Lie bracket (the Gerstenhaber bracket), so  $\cD^{\poly}(X)_{[-1]}$ is naturally a Lie algebra in  super cochain chain complexes.

The associated graded piece $\gr^{\tau^{HH}}_p\cD^{\poly}(X)$ surjects quasi-isomorphically to cohomology $\H^p_b\cD^{\poly}(X)$ of $\cD^{\poly}(X)$ with respect to the Hochschild differential $b$. On $\H^*_b\cD^{\poly}(X)$, the cup product is graded-commutative and the higher braces vanish, so we just have a $P_2$-algebra. Moreover, the HKR map gives an isomorphism $\mathrm{HKR} \co \prod_p \H^p_b\cD^{\poly}(X)^{[-p]} \to \widehat{\Pol}(X,0)$ of $P_2$-algebras.

As in \cite[\S 2.1]{braunInvolutive}, there is also an involution
\[
 i(f)(a_1, \ldots, a_m) := - (-1)^{\sum_{i<j}  \bar{a}_i \bar{a}_j} (-1)^{m(m+1)/2}f(a_m, \ldots , a_1).
\]
of $\cD^{\poly}(X)$, with $-i$ reversing the cup product and changing the sign of the Gerstenhaber bracket, so $i$ itself is a morphism of DGLAs.

The following definitions are adapted from \cite{DQnonneg}, replacing Hochschild complexes with complexes of polydifferential operators:

\begin{definition}\label{qpoldef0}
Define the complex of quantised $0$-shifted polyvector fields  on $X$ by
\[
 Q\widehat{\Pol}(X,0):= \prod_{p \ge 0} \hatTot \tau^{HH}_p \cD^{\poly}(X)\hbar^{p-1}. 
\]

Define its subcomplex $Q\widehat{\Pol}(X,0)^{sd}$ to consist of elements $\Delta(\hbar)$ with  $i(\Delta)(\hbar)= (\Delta)(-\hbar)$.
\end{definition}

Properties of the filtration $\tau^{HH}$ as in \cite[Lemmas \ref{DQnonneg-gradedHH} and \ref{DQnonneg-involutiveHH}]{DQnonneg}  ensure that $Q\widehat{\Pol}(X,0)^{[1]}$ and $(Q\widehat{\Pol}(X,0)^{sd})^{[1]}$ are super DGLAs.

\begin{definition}\label{QFdef}
Define a decreasing filtration $\tilde{\tau}_{HH}$ on  $Q\widehat{\Pol}(X,0)$  by the subcomplexes 
\begin{align*}
 \tilde{\tau}_{HH}^iQ\widehat{\Pol}(X,0)&:= \prod_{j \ge i} \hatTot\tau^{HH}_j\cD^{\poly}(X)\hbar^{j-1}.
\end{align*}
 \end{definition}
This filtration is complete and Hausdorff,   with $[\tilde{\tau}_{HH}^i,\tilde{\tau}_{HH}^j] \subset \tilde{\tau}_{HH}^{i+j-1}$.
In particular,  this makes $\tilde{\tau}_{HH}^2Q\widehat{\Pol}(X,0)^{[1]}$ and  $(Q\widehat{\Pol}(X,0)^{sd})^{[1]}$ into pro-nilpotent filtered super DGLAs.

\begin{definition}\label{E1QPdef}
Define a $BD_1$ quantisation of $X$ to be a Maurer--Cartan element
\[
 \Delta \in \mc((\tilde{\tau}_{HH}^2Q\widehat{\Pol}(X,0)^{=})^{[1]}),
\]
and define the space of $BD_1$ quantisations of $X$ by
\[
 Q\cP(X,0):= \mmc((\tilde{\tau}_{HH}^2Q\widehat{\Pol}(X,0)^{=})^{[1]}).
\]

The morphism $Q\cP(X,0) \to \cP(X,0)$, sending a quantisation  to its underlying $0$-shifted Poisson structure, is given by the composite 
\[
  \prod_{j \ge 2} \tau^{HH}_j\cD^{\poly}(X)\hbar^{j-1}  \to 
  \prod_{j \ge 2} \gr^{\tau^{HH}}_j\cD^{\poly}(X)\hbar^{j-1}\xra[\sim]{\hbar^{1-j}\mathrm{HKR}} F^2\widehat{\Pol}(X,0).
\]

Define a self-dual $BD_1$ quantisation of $X$ to be a Maurer--Cartan element
\[
 \Delta \in \mc((\tilde{\tau}_{HH}^2Q\widehat{\Pol}(X,0)^{sd,=})^{[1]}),
\]
and define the space of $BD_1$ quantisations of $X$ by
\[
 Q\cP(X,0):= \mmc((\tilde{\tau}_{HH}^2Q\widehat{\Pol}(X,0)^{=})^{[1]}).
\]
\end{definition}
Thus each  element $\Delta \in  Q\cP(X,0)_0$ can be written as $\Delta = \sum_{n \ge 0} \Delta_n$, for 
\[
\Delta_n  \in \hbar^{\max{n-1,1}} \hatTot\cDiff_{\C^{\infty}(X^n)}(\C^{\infty}(X^n), \C^{\infty}(X))^{2-n}\brh
\]
satisfying the Maurer--Cartan equation, which forces restriction of the leading coefficients to $\tau^{HH}$.

\begin{remarks}\label{Q1quantrmks}
When $X$ is just an NQ supermanifold, $\hatTot$ acts trivially, so $(\Delta_0,Q +\Delta_1, m+\Delta_2, \Delta_{\ge 3})$ gives a curved $A_{\infty}$-algebra structure $\O_X'$ on $\O_X\llbracket\hbar\rrbracket$ with $\O_X'/\hbar=\O_X$,  because $\hbar\mid \Delta$. As in \cite[\S 1.1.2 and Remark \ref{DQLag-HKRrmk}]{DQLag}, this is in fact an algebra over a resolution of the $BD_1$ operad. 
For dg supermanifolds the same is true, and the curvature $\Delta_0$ is necessarily $0$ for degree reasons, but it still exhibits itself in the higher  structure of $Q\cP(X,0)$, with inner automorphisms (conjugation by $\exp(\hbar\sO_X\brh)$) giving rise to homotopies between isomorphisms. Self-dual quantisations correspond to such deformations equipped with an $(\hbar \mapsto -\hbar)$-semilinear involution $\O_X' \to (\sO_X')^{\op}$; when $X$ is a supermanifold, this just says $a\star_{-\hbar} b = b\star_{\hbar} a$.

For more general super dg NQ-manifolds, the stacky and derived structures interact in a non-trivial way for quantisations, and indeed for Poisson structures; a quantisation gives rise to a curved $A_{\infty}$-algebra structure on $(\hatTot \O_X) \brh$, but each coefficient of each component $\Delta_n$ of the $A_{\infty}$ structure must be bounded below in the cochain direction. See \S \ref{filtsn} for a little more detail on that perspective.
In general, as in \cite[Remark \ref{DQpoisson-strictquantnrmk}]{DQpoisson},  if  $\H^2\hatTot\C^{\infty}(X)=0$ then all quantisations of $X$ are equivalent to uncurved quantisations.

The boundedness constraints on the operators $\Delta_n$ make such quantisations significantly stronger structures than just a deformation of the ring of functions up to quasi-isomorphism. In particular, \cite[Proposition \ref{DQnonneg-Perprop}]{DQnonneg} ensures that each element of  $Q\cP(X,0)$ gives rise to a curved $A_{\infty}$ deformation of the dg category of perfect complexes on $X$, defined as in Example \ref{vbundleex}.
\end{remarks}

Examples of $BD_1$  quantisations  on NQ-manifolds and supermanifolds  are given by \cite[Theorem 3.2]{CattaneoFelder} and \cite{AlekseevMeinrenken}, respectively. The latter can also be regarded as a structure on an NQ-manifold satisfying the  analogue of Definition \ref{E1QPdef} with $\hbar$ in degree $2$, so the $\hbar^{j-1}$ terms in the comparison map make the underlying Poisson structure $2$-shifted rather than $0$-shifted.

\subsubsection{Quantisation of \texorpdfstring{$0$}{0}-shifted Poisson structures on dg NQ supermanifolds}\label{quant0sn}

We now explain how \cite{DQpoisson}  gives quantisations of $0$-shifted Poisson structures on dg NQ-manifolds. As in the Kontsevich--Tamarkin approach \cite{tamarkinAnotherKontsevichFormality,KontsevichOperads,YekutieliDQAG,yekutieliTwistedDQ,vdBerghGlobalDQ} to quantisation, we begin by making use of the $E_2$-algebra structure on polydifferential operators and formality of the $E_2$-operad. Where their  quantisation for manifolds hinges on invariance of the Hochschild complex under affine transformations, an argument which will not adapt to dg manifolds, we instead exploit
 the observation that the Hochschild complex  carries an anti-involution, and that such anti-involutive deformations of the complex of polyvectors are essentially unique.

\begin{theorem}\label{derived0quantthm}
 Given a dg NQ supermanifold  $X$, 
a choice of even associator gives an equivalence between
the space $Q\cP(X,0)$ of $BD_1$ quantisations of $X$
and the Maurer--Cartan space 
\begin{align*}
  \mmc( F^2\widehat{\Pol}(X,0)^=_{[-1]} \by\hbar F^1\widehat{\Pol}(X,0)^=_{[-1]} \by \hbar^2\widehat{\Pol}(X,0)_{[-1]}^=\brh),
 \end{align*}
with the space $Q\cP(X,0)^{sd} \subset Q\cP(X,0)$ of self-dual quantisations corresponding to the subspace
\[
 \mmc(F^2\widehat{\Pol}(X,0)^=_{[-1]} \by \hbar^2\widehat{\Pol}(X,0)^=_{[-1]}\brhh)
\]

In particular, every Poisson structure
\[
 \pi \in \cP(X,0)= \mmc( F^2\widehat{\Pol}(X,0)^=_{[-1]}) 
\]
admits at least one quantisation
in the form of a curved $A_{\infty}$-deformation of $\O_X$  carrying an $\hbar$-semilinear anti-involution. 
\end{theorem}
\begin{proof}
This is the $\C^{\infty}$ case of \cite[Theorem \ref{DQpoisson-fildefhochthm2}]{DQpoisson} (which also establishes the result in algebraic and analytic settings), slightly generalised to incorporate the additional superalgebra $\Z/2$-grading. We now describe the main steps of the proof.

For $\cD^{\poly}_{\bigoplus} \subset \cD^{\poly}$ defined using $\bigoplus$ instead of $\prod$,  the algebra $(\cD^{\poly}_{\bigoplus}, \tau^{HH},-i)$ is a quasi-involutive  almost commutative  brace algebra in super chain cochain complexes in the sense of \cite[Definition \ref{DQpoisson-acbracedef}]{DQpoisson}.
 As in \cite[Definition \ref{DQpoisson-pwinvdef}]{DQpoisson}, for  Levi decompositions $w$ of the Grothendieck--Teichm\"uller group corresponding to even associators, the Grothendieck--Teichm\"uller action on the brace operad $\Br$ gives an $\infty$-equivalence $p_w$  between almost commutative anti-involutive brace algebras and almost commutative anti-involutive $P_2$-algebras. The latter are filtered $P_2$-algebras equipped with an involution preserving the product and reversing the bracket, such that the involution acts as $(-1)^i$ on the $i$th graded piece, the product and bracket preserve the filtration, and the bracket vanishes on the associated graded algebra.
 
 The associated graded object $\gr^{\tau^{HH}}\cD^{\poly}$ is a graded super chain cochain complex, which via the HKR isomorphism is levelwise quasi-isomorphic to the graded  $\gr_{\tau}\Br \simeq P_2$-algebra 
\[
 \cPol(X,0) :=\bigoplus_m \Symm_{\cC^{\infty}(X)}^m((T_{\cC^{\infty}(X)})_{[1]}).
\]
As a graded commutative algebra, this is generated over its weight $0$ component $\cC^{\infty}(X)$ by its weight $1$ component $T_{\cC^{\infty}}(X)$. This severely constrains the possible almost commutative $P_2$-algebras with this associated graded object, and imposing an anti-involution completely rigidifies the problem. In particular,  on applying $\hatTot$ termwise this gives us an involutive filtered $P_{2, \infty}$-quasi-isomorphism between  $\cPol(X,0)$ and  $p_w\cD^{\poly}_{\oplus}$ (and hence   
 a filtered  $L_{\infty}$-quasi-isomorphism between $\cPol(X,0)$ and $\cD^{\poly}_{\oplus}$).

The desired expressions then follow by substitution, in particular clearing out factors of $\hbar^{m-1}$ multiplying $m$-vectors as in \cite[Corollary \ref{DQpoisson-affquantcor}]{DQpoisson}.
\end{proof}

\begin{remarks}\label{quantpropsd0}
Theorem \ref{derived0quantthm} gives a complete parametrisation of all deformation quantisations, so necessarily includes those of \cite{tamarkinAnotherKontsevichFormality,kontsevichPoisson} for manifolds. 
For manifolds, the formality quasi-isomorphism $(\cD^{\poly}(X),\tau^{HH}) \simeq (\widehat{\Pol}(X,0),\Fil)$ must be the same as that of \cite{tamarkinAnotherKontsevichFormality}, because the uniqueness property satisfied by our deformation guarantees local invariance under affine transformations, which is Tamarkin's uniqueness property. 

The objects of the parametrisations of $Q\cP(X,0)$ and $Q\cP(X,0)^{sd}$ in Theorem \ref{derived0quantthm} can be thought of as formal Poisson structures on $\sO_X\brh$ and $\sO_X\brhh$ respectively, except that the underlying differential algebra is allowed to deform (the term in $\hbar T_{\cC^{\infty}(X)}\brh$, resp. $\hbar^2T_{\cC^{\infty}(X)}\brhh$) and there can be curvature (the term in $\hbar^2\cC^{\infty}(X)\brh$, resp. $\hbar^2\cC^{\infty}(X)\brhh$).

For  non-degenerate $0$-shifted Poisson structures, we can parametrise the space of quantisations even more explicitly. The Legendre transformation from the proof of  Theorem \ref{compatthmdgLie}, using $\sigma + \pd_{\hbar}$ in place of $\sigma$, combines with Theorem \ref{derived0quantthm} to give a weak equivalence 
 \begin{align*}
   Q\cP(X,0)^{\nondeg,sd} &\simeq \cP(X,0)^{\nondeg} \by \mmc(\hbar^2 \DR(X)^{[1],=}\llbracket\hbar^2\rrbracket);
 \end{align*}
this
recovers the description of \cite[Theorem \ref{DQnonneg-quantpropsd}]{DQnonneg}, which has a more direct proof but still depends \emph{a priori} on an even associator. Independence from choices of associator in the non-degenerate setting is established  whenever $\pi^0X^=$ is locally smoothly contractible in \cite[Example \ref{DQexact-DWLex}.(\ref{DQexact-DWLex0shift})]{DQexact}, together with a description of the transformation between the respective parametrisations, and also  the relation to Fedosov's and De Wilde and Lecomte's parametrisations when $X$ is a manifold.
\end{remarks}

\subsection{Quantisation of \texorpdfstring{$(-1)$}{-1}-shifted symplectic structures on dg NQ-manifolds}\label{quantneg1sn}

 Fix a super dg NQ-manifold $X$.
 
\subsubsection{Formulation of \texorpdfstring{$(-1)$}{-1}-shifted quantisations}
The following definitions are adapted from  
\cite[Definitions \ref{DQvanish-bistrictlb}, \ref{DQvanish-qpoldef}, \ref{DQvanish-QFdef} and \ref{DQvanish-Qpoissdef}]{DQvanish}:

\begin{definition}\label{bistrictlb}
 Define a strict line bundle over $X$ to be a $\C^{\infty}(X)$-module $M$ in super chain cochain complexes such that $M^{\#}_{\#}$ is a projective module of rank $1$ over the super bigraded-commutative algebra $\C^{\infty}(X)^{\#}_{\#}$ underlying $\C^{\infty}(X)$. 
\end{definition}
What we are calling a line bundle should really be thought of as the module of global sections of a line bundle. For each such $M$, there is an associated sheaf $M\ten_{ \C^{\infty}(X)}\O_X$ of sections.

\begin{definition}\label{qpoldefneg1}
Given a strict line bundle $M$ over $X$, define the super cochain complex of quantised $(-1)$-shifted polyvector fields  on $M$ by
\[
 Q\widehat{\Pol}(X,M,-1):= \prod_{p \ge 0}\hatTot F_p\cDiff_{\C^{\infty}(X)}(M,M)\hbar^{p-1}, 
\]
for differential operators and the order filtration from Definition \ref{diffopsdef}.

Multiplication of differential operators gives us a product
\[
 Q\widehat{\Pol}(X,M,-1) \by Q\widehat{\Pol}(X,M,-1) \to \hbar^{-1} Q\widehat{\Pol}(X,M,-1),
\]
but because $M$ is a line bundle, the associated commutator $[-,-]$ takes values in $Q\widehat{\Pol}(X,M,-1)$, so $Q\widehat{\Pol}(X,M,-1)$ is a super DGLA.

Define a decreasing filtration $\tilde{F}$ on  $Q\widehat{\Pol}(X,M,-1)$ by 
\[
 \tilde{F}^iQ\widehat{\Pol}(X,M,-1):= \prod_{j \ge i} F_j\cDiff_{\C^{\infty}(X)}(M,M)\hbar^{j-1};
\]
this has the properties that $Q\widehat{\Pol}(X,M,-1)= \Lim_i Q\widehat{\Pol}(X,M,-1)/\tilde{F}^i$, with $[\tilde{F}^i,\tilde{F}^j] \subset \tilde{F}^{i+j-1}$, $\delta \tilde{F}^i \subset \tilde{F}^i$, and $ \tilde{F}^i \tilde{F}^j \subset \hbar^{-1} \tilde{F}^{i+j}$.
\end{definition}

\begin{definition}\label{Qpoissdef0}
Define  the space $Q\cP(X,M,-1)$ of $BD_0$ quantisations of a strict line bundle $M$  on $X$  to be given by the simplicial
set
\[
 Q\cP(X,M,-1):= \Lim_i \mmc( \tilde{F}^2 Q\widehat{\Pol}(X,M,-1)^{=}/\tilde{F}^{i+2}).
\]

The morphism $Q\cP(X,M,-1) \to \cP(X,-1)$, sending a quantisation  to its underlying $(-1)$-shifted Poisson structure, is given by the composite
 \[
  \prod_{j \ge 2} F_j\cDiff_{\C^{\infty}(X)}(M,M)\hbar^{j-1} \to \prod_{j \ge 2} \gr^F_j\cDiff_{\C^{\infty}(X)}(M,M)\hbar^{j-1} \cong F^2\widehat{\Pol}(X,-1),
 \]
 since $ \gr^F_j\cDiff_{\C^{\infty}(X)}(M,M)\cong \Symm_{\cC^{\infty}(X)}^j(T_{\cC^{\infty}(X)})$.
\end{definition}

Thus a $BD_0$ quantisation is a deformation of $M$ given by differential operators, with some constraints on their orders. As in \cite[Remark \ref{DQvanish-BVrmk}]{DQvanish},  $BD_0$ quantisations $\Delta$ of the trivial line bundle $M=\C^{\infty}(X) $ with  $\Delta(1)=0$ are commutative $BV_{\infty}$-algebras in the sense of \cite[Definition 9]{kravchenko}, though we have additional restrictions because being a $\C^{\infty}$ differential operator is more restrictive than being an algebraic differential operator. 
Similarly to  Remarks \ref{Q1quantrmks},   in general these operations are defined on  $\hatTot \C^{\infty}(X)$ with bounds on the cochain degrees as \S \ref{filtsn}.
The case where the only non-zero term lies in  $F_2\cD_X\hbar$ then corresponds to a $BD_0$-algebra structure on  
$\C^{\infty}(X)\brh $ (i.e. a $BD$-algebra in the sense of \cite[Definition 2.2.5]{gwilliamThesis}).

\subsubsection{Quantisation for spin structures}

The module $\C^{\infty}(X)$ naturally has the structure of a left $\cD_{\C^{\infty}(X)}$-module (via the embedding of $\cD_{\C^{\infty}(X)}$ in $\HHom_{\R}(\C^{\infty}(X), \C^{\infty}(X))$); the same is true for any vector bundle equipped with a flat connection. Right $\cD$-modules are more subtle to construct, but on a super NQ-manifold $X$, the orientation bundle  (i.e. the determinant, or Berezinian, of $\Omega^1_{\C^{\infty}(X)}$) is naturally a right $\cD_{\C^{\infty}(X)}$-module, via the identification
\[
 \det\Omega^1_{\C^{\infty}(X)} \simeq (\Ext^p_{\cD_{\C^{\infty}(X)}^{\#}}(\C^{\infty}(X)^{\#}, \cD_{\C^{\infty}(X)}^{\#}),Q)
\]
(i.e. turn off the differential $Q$, calculate $\Ext$, then restore $Q$),
where $p$ is the number of local generators of $\O_X$ of even parity; this identification follows along the same lines as the construction of the Berezinian in \cite{DeligneMorganSupersymm}. 

Similarly, $(\Ext^p_{\cD_{\C^{\infty}(X),\#}^{\#}}(\C^{\infty}(X)^{\#}_{\#}, \cD_{\C^{\infty}(X),\#}^{\#}),Q, \delta)$ gives a right $\cD$-module when $X$ is  a super  dg NQ-manifold, but the expression is not usually invariant under the equivalences of Remarks \ref{derivedstackyrmk} and Appendix \ref{equivapp}. It does, however, behave when the only even parity generators are in chain degree $0$, in which case it broadly corresponds to the dualising line bundle of \cite[\S 5.6]{gaitsgoryIndCoh}. 

Now, if $\omega$ is a strict line bundle with a right $\cD$-module structure, there is a standard isomorphism 
\[
  \cD_{\C^{\infty}(X)}^{\op}\cong \cDiff_{\C^{\infty}(X)}(\omega, \omega)
\]
of super chain cochain associative algebras, following from the observation that the elements of   $\cD_{\C^{\infty}(X)}^{\op}$ acting on $\omega$ on the right must act as differential operators. Moreover, a right $\cD$-module structure on any vector bundle $M$ then corresponds to a flat connection on $M\ten_{\C^{\infty}(X)}\omega^*$.

We now proceed as in \cite[\S \ref{DQvanish-sdsn}]{DQvanish}:

If $L$ is a strict line bundle with a right $\cD$-module structure on $L^{\ten 2}$ (so $L$ broadly corresponds to a spin structure), we then have
\begin{align*}
 \cDiff_{\C^{\infty}(X)}(L,L)^{\op} &\cong (L\ten \cD_{\C^{\infty}} \ten L^*)^{\op}\\
&\cong L^*\ten \cD_{\C^{\infty}}^{\op} \ten L\\
&\cong L^*\ten L^{\ten 2} \ten  \cD_{\C^{\infty}} \ten (L*)^{\ten 2} \ten L\\
&\cong \cDiff_{\C^{\infty}(X)}(L,L).
\end{align*}

\begin{definition}
For a line bundle $L$  with a  right $\cD$-module structure on $L^{\ten 2}$, writing
\[
 (-)^t \co \cDiff_{\C^{\infty}(X)}(L,L)^{\op} \to \cDiff_{\C^{\infty}(X)}(L,L)
\]
 for the natural anti-involution above, define
\[
 (-)^* \co Q\widehat{\Pol}(X,L,-1) \to Q\widehat{\Pol}(X,L,-1)
\]
by
\[
 \Delta^*(\hbar):= -\Delta^t(-\hbar).
\]

We then define $Q\widehat{\Pol}(X,L,-1)^{sd}$ to be the fixed points for the involution $*$, and set 
\[
Q\cP(X,L,-1)^{sd}:= \Lim_i \mmc( \tilde{F}^2 Q\widehat{\Pol}(X,L,-1)^{sd,=}/\tilde{F}^{i+2})
\]
\end{definition}

The reason for the choice of sign $- \hbar$ in the definition of $\Delta^*$ is that on the associated graded $\gr^F_p \cDiff_{\C^{\infty}(X)}(L,L) \cong \Symm^p_{\C^{\infty}(X)} T_{\C^{\infty}(X)}$, the operation $(-)^t$ is given  by $(-1)^p$. Thus $\Delta$ and $\Delta^*$ have  the same underlying Poisson structures. 

\begin{theorem}\label{quantpropsdneg1}
 For a  super dg NQ-manifold $X$ and a  line bundle $L$ on $X$  with $L^{\ten 2} $ a right $\cD$-module (such as any square root of the orientation bundle), there is a canonical weak equivalence
\[
  Q\cP(X,L,-1)^{\nondeg,sd} \simeq \cP(X,-1)^{\nondeg} \by \mmc(\hbar^2 \DR(X)\llbracket\hbar^2\rrbracket).
\]
In particular, every non-degenerate $(-1)$-shifted Poisson structure gives  a canonical choice of self-dual quantisation of $(X,L)$.
\end{theorem}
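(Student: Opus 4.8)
The argument will be the $E_0$ (i.e. $(-1)$-shifted) counterpart of Theorem \ref{quantpropsd0}, obtained by adapting the self-duality machinery of \cite[\S \ref{DQvanish-sdsn}]{DQvanish} from Hochschild complexes of line bundles to complexes of $\C^{\infty}$ differential operators; since an $E_0$-structure carries no higher homotopy coherences, the argument is actually cleaner than in the $0$-shifted case, needing in particular no Drinfeld associator or Levi decomposition of the Grothendieck--Teichm\"uller group. First I would attach to each quantisation $\Delta \in Q\cP(L,-1)$ the \emph{quantised Poisson cohomology complex} $T_{\Delta}Q\widehat{\Pol}(L,-1)$, formed by taking $\hbar\, Q\widehat{\Pol}(L,-1)$ and adding $[\Delta,-]$ to the differential. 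Because $L$ has rank $1$, the commutator of differential operators stays inside $Q\widehat{\Pol}(L,-1)$ (Definition \ref{qpoldefneg1}), so this is a well-defined filtered complex for $\tilde F$, and reducing modulo $\hbar$ recovers the Poisson cohomology complex of the Poisson structure $\pi=\pi_{\Delta}$ underlying $\Delta$ in $\cP(L,-1)$ (which may be identified with $\cP(X,-1)$). A formal consequence of the Maurer--Cartan equation for $\Delta$ is then that $\hbar^{2}\tfrac{\pd\Delta}{\pd\hbar}$ is a cocycle in $T_{\Delta}Q\widehat{\Pol}(L,-1)^{=}$, hence defines a quantised Poisson cohomology class.

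The central construction is the compatibility map
\begin{align*}
 \mu(-,\Delta) \co \DR(X/R)\brh &\to T_{\Delta}Q\widehat{\Pol}(L,-1)\\
 a\, df_1 \wedge \ldots \wedge df_p &\mapsto a[\Delta,f_1]\cdots [\Delta,f_p],
\end{align*}
a morphism of filtered super cochain complexes whose reduction modulo $\hbar$ is the compatibility map $\mu(-,\pi)$ of Theorem \ref{compatthmdgLie}. When $\pi$ is non-degenerate the latter is a quasi-isomorphism, and filtering by powers of $\hbar$ shows $\mu(-,\Delta)$ is too. Thus to such a $\Delta$ one associates the power series $\mu(\Delta,-)^{-1}(\hbar^{2}\tfrac{\pd\Delta}{\pd\hbar})$, a class in $\H^{1}\Fil^2\DR(X/R)^{=}\by\hbar\,\H^{1}\DR(X/R)^{=}\brh$; conversely, given such an $\omega$, one solves $\mu(\omega,\Delta)\simeq\hbar^{2}\tfrac{\pd\Delta}{\pd\hbar}$ up to coherent homotopy by writing $\Delta$ as a power series in $\hbar$ and fixing its coefficients inductively through the obstruction theory of the pro-nilpotent filtered DGLA $\tilde F^2 Q\widehat{\Pol}(L,-1)^{=}$, the leading coefficient being supplied by the symplectic-to-Poisson correspondence of Theorem \ref{compatthmdgLie}.

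The step I expect to be the main obstacle is the self-duality reduction. As in \cite[Proposition \ref{DQnonneg-quantprop}]{DQnonneg} and its $(-1)$-shifted analogue in \cite{DQvanish}, a direct calculation should show that the only potential obstructions to, and ambiguities in, the inductive step lie in the first-order-in-$\hbar$ part of the deformation of $\pi$. The anti-involution $(-)^{*}$ on $Q\widehat{\Pol}(L,-1)$ --- available precisely because $L^{\ten 2}$ carries a right $\cD$-module structure, yielding the transpose $(-)^{t}$ and hence $\Delta^{*}(\hbar)=-\Delta^{t}(-\hbar)$ --- acts on $\gr^{F}_{p}$ by $(-1)^{p}$, so it fixes $\pi_{\Delta}$ while negating $\hbar$; restricting to the fixed points $Q\widehat{\Pol}(L,-1)^{sd}$ therefore eliminates exactly the odd-in-$\hbar$ sector containing the obstruction, as in \cite[Lemma \ref{DQnonneg-filtsd}]{DQnonneg}. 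Tracking this through the obstruction theory forces the fibre of the classical-limit map $Q\cP(L,-1)^{\nondeg,sd}\to\cP(L,-1)^{\nondeg}$ over a chosen $\pi$ to be equivalent to $\mmc(\hbar^2\DR(X/R)\llbracket\hbar^2\rrbracket)$, which yields the asserted weak equivalence, with the canonical self-dual quantisation of a non-degenerate $\pi$ being the one lying over $0$ in this fibre. The remaining care --- and the point at which the combined stacky and derived structure makes the bookkeeping delicate --- is to check that every complex in play is complete and Hausdorff for the relevant filtration, that each component of $\Delta$ and of the homotopies is bounded below in the cochain direction so that $\hat{\Tot}$ (rather than $\Tot^{\Pi}$) is genuinely the right total complex throughout, and that the right $\cD$-module identification of $L^{\ten 2}$ (equivalently, of the Berezinian of $\Omega^1_X$) is compatible with the equivalences of Remarks \ref{derivedstackyrmk}, which is precisely the situation in which only even-parity generators in chain degree $0$ occur.
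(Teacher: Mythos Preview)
Your overall strategy --- quantised Poisson cohomology $T_{\Delta}Q\widehat{\Pol}(L,-1)$, the canonical cocycle $\hbar^{2}\tfrac{\pd\Delta}{\pd\hbar}$, a compatibility map from a de Rham-type complex, and obstruction theory with self-duality killing the odd-in-$\hbar$ sector --- matches the paper's exactly. There is, however, a genuine gap in your central construction.

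The formula
\[
 a\, df_1 \wedge \ldots \wedge df_p \;\longmapsto\; a[\Delta,f_1]\cdots [\Delta,f_p]
\]
is not well-defined as a map out of $\DR(X)$. In the $0$-shifted setting of Theorem \ref{quantpropsd0}, the target was turned into a $P_2$-algebra $p_wT_{\Delta}Q\widehat{\Pol}(X,0)$ by a Levi decomposition $w$, so it acquired a \emph{commutative} product and the right-hand side factored through the exterior power. Here the target $T_{\Delta}Q\widehat{\Pol}(L,-1)$ is built from differential operators on $L$ under composition; the product $[\Delta,f_1]\cdots [\Delta,f_p]$ is associative but not graded-commutative, so the expression depends on the order of the $f_i$ and cannot descend to $\Omega^p_{\cC^{\infty}(X)}=\L^p\Omega^1_{\cC^{\infty}(X)}$. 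You were right that no associator is needed, but the price is that the naive source $\DR(X)$ no longer works.

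The paper's fix (following \cite[Definition \ref{DQvanish-DRprimedef} and Lemmas \ref{DQvanish-cfDRlemma}, \ref{DQvanish-mulemma1}]{DQvanish}) is to replace $\DR(X)$ by a quasi-isomorphic \emph{non-commutative} model $\DR'(X)$: one completes $\cC^{\infty}(X^{m+1})$ along the diagonal, forms the cosimplicial object with the Alexander--Whitney cup product, and takes the cosimplicial conormalisation. The compatibility map is then characterised as the continuous \emph{multiplicative} map on $\cC^{\infty}(X^{\bt+1})^{\wedge}$ determined by $\mu(a,\Delta)=a$ for $a\in\cC^{\infty}(X)$ and $\mu(1\ten 1,\Delta)=\Delta$; this is well-defined precisely because both sides are merely associative. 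Once $\mu(-,\Delta)\co \DR'(X)\brh \to T_{\Delta}Q\widehat{\Pol}(L,-1)$ is in place, the rest of your argument --- non-degeneracy giving a filtered quasi-isomorphism, the inductive solution of $\mu(\omega,\Delta)\simeq\hbar^{2}\tfrac{\pd\Delta}{\pd\hbar}$, and the self-duality cancellation as in \cite[Proposition \ref{DQvanish-quantpropsd}]{DQvanish} --- goes through as you describe.
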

\begin{proof}
The main results of \cite{DQvanish} combine and adapt to give this statement. The key is to modify the argument from Theorem \ref{compatthmLie} and Remarks \ref{quantpropsd0} via a compatibility map defined on a variant of the de Rham complex. As in \cite[Definition \ref{DQvanish-DRprimedef}]{DQvanish}, the first step is to let $\C^{\infty}(X^n)^{\wedge}$ be the completion of $\C^{\infty}(X^n)$ with respect to the ideal of the diagonal map $\C^{\infty}(X^n) \to \C^{\infty}(X)$. 

We then let $\C^{\infty}(X^{\bt +1})^{\wedge} $ be the total super cochain complex of the super double cochain complex
\[
 \C^{\infty}(X)^{\wedge} \xra{d} \C^{\infty}(X^2)^{\wedge} \xra{d}\C^{\infty}(X^3)^{\wedge} \xra{d} \ldots,
\]
with  boundary map  $d \co \C^{\infty}(X^{m +1})^{\wedge} \to\C^{\infty}(X^{m +2})^{\wedge}  $  given by
\[
 df(x^0, \ldots, x^{m+1}) = \sum_{i=0}^{m+1}(-1)^i f(x^0, \ldots,x^{i-1}, x^{i+1}, \ldots,  x^{m+1}).
\]
Then  $\C^{\infty}(X^{\bt +1})^{\wedge} $ has an associative product given by the  Alexander--Whitney cup product 
\[
 (f \smile g)(x^0, \ldots, x^{m+n+1})= f(x^0, \ldots, x^m)g(x^m, \ldots, x^{m+n}).
\]

The next step is to set $\DR'(X) \subset \C^{\infty}(X^{\bt +1})^{\wedge} $ be the super cochain subcomplex given by cosimplicial conormalisation, so we only consider functions in  $\C^{\infty}(X^{m +1})^{\wedge}$ which vanish on all big diagonals $X^m \subset X^{m+1}$.  As in \cite[Lemma \ref{DQvanish-cfDRlemma}]{DQvanish}, there is a natural quasi-isomorphism $\DR'(X) \to \DR(X)$. 

For any $\Delta \in Q\cP(X,L,-1)$, we write $T_{\Delta} Q\widehat{\Pol}(X,L,-1) $ for the complex given by first taking $\hbar Q\widehat{\Pol}(X,L,-1)$ then adding $[\Delta,-]$ to the differential. We regard its cohomology as  quantised $(-1)$-shifted Poisson cohomology, and 
it contains a canonical $1$-cocycle $ \hbar^{2}\frac{\pd \Delta}{\pd \hbar}$.
As in \cite[Lemmas \ref{DQvanish-mulemma1} and \ref{DQvanish-QPolmudef}]{DQvanish}, any $\Delta \in Q\cP(X,L,-1)$ gives rise to  a compatibility map
\[
 \mu(-,\Delta) \co \DR'(X)\brh \to T_{\Delta}Q\widehat{\Pol}(X,L,-1),
\]
induced by the  continuous multiplicative map on $\C^{\infty}(X^{\bt +1})^{\wedge} $  determined by the properties that
$\mu(1\ten 1, \Delta) = \Delta$ and $\mu(a, \Delta)=a$ for $ a \in \C^{\infty}(X)$. 

Filtering by powers of $\hbar$, on associated graded pieces the map $\mu(-,\Delta)$ reduces to the composite
\[
  \DR'(X)\hbar^i \to \DR(X)\hbar^i \xra{\mu(-,\pi)}\widehat{\Pol}_{\pi}(X,-1),
\]
for $\pi$ the underlying Poisson structure. When $\Delta$ is non-degenerate, this is a filtered quasi-isomorphism as in Theorem \ref{compatthmdgLie}, so $\mu(-,\Delta)$ is a filtered quasi-isomorphism. To $\Delta$ we may thus associate the power series
\[
 [\mu(\Delta,-)^{-1}\hbar^{2}\frac{\pd \Delta}{\pd \hbar}] \in  \H^1\Fil^2\DR(X) \by  \hbar\H^1\DR(X)\brh.
\]

If we start from a power series in $\H^1\DR(X)$ and attempt to solve for $\Delta$, then
the leading term is given by the correspondence between  non-degenerate Poisson structures and symplectic structures in Theorem \ref{compatthmdgLie}. For higher terms, we filter powers of $\hbar$, and use the obstruction theory associated to filtered DGLAs (see Appendix \ref{towersn}). Calculation as in \cite[Proposition \ref{DQvanish-quantprop}]{DQvanish} shows that the only potential obstruction or ambiguity is in the first-order deformation of the Poisson structure, but as in
\cite[Lemma \ref{DQvanish-quantpropsd}]{DQvanish}, this vanishes when we restrict to self-dual quantisations, showing that the latter are parametrised by
\[
 \H^{1}\Fil^2\DR(X)^{=} \by \hbar^2 \H^1\DR(X)^{=}\brhh. \qedhere
\]
\end{proof}

\begin{example}\label{vanishex}
 For $M$ a manifold of dimension $p$, Examples \ref{DCritex} give a  canonical $(-1)$-shifted symplectic structure $\omega$ on the derived critical locus $X=\DCrit(M,f)$, and pulling back the determinant bundle $\Omega^p_M$ to $X$ gives a line bundle $L$ satisfying the conditions of Theorem \ref{quantpropsdneg1}. A natural self-dual quantisation of 
\begin{align*}
  L &= \Omega^p_{\C^{\infty}(M)} \ten_{\C^{\infty}(M)}\C^{\infty}(\DCrit(M,f))\\
&= \Omega^p_{\C^{\infty}(M)} \ten_{\C^{\infty}(M)} (\bigoplus_i  (\L^i_{ \C^{\infty}(M)}T_{ \C^{\infty}(M)})_{[-i]}, \lrcorner df)\\
 &\cong (\bigoplus_j \Omega^j_{\C^{\infty}(M), [j]}, df \wedge-)_{[-p]}
\end{align*}
 over this symplectic structure is then given by the twisted de Rham complex 
\[
 (\bigoplus_j  \Omega^j_{\C^{\infty}(M), [j]}\brh , \hbar d + df \wedge-)_{[-p]},
\]
and as in \cite[Lemma \ref{DQvanish-PVlemma}]{DQvanish}, this is the quantisation associated by Theorem \ref{quantpropsdneg1} to the constant power series $\omega$. As in \cite[Proposition 4.9]{DQvanish}, on inverting $\hbar$ the algebraic analogue of this quantisation recovers the sheaf of vanishing cycles from \cite{BBDJS}, which is often described as a quantisation but lacks a classical limit.

A volume form $\mu$ on $M$ is the same as a choice of isomorphism $\Omega^p_M \cong \O_M$. This leads to an isomorphism $L \cong \O_X$, and the quantisation above then becomes a quantisation of the trivial line bundle on $X$. This quantisation is precisely the quantum BV complex as described on  \href{https://ncatlab.org/nlab/show/BV-BRST+formalism}{the nlab} or \cite[\S 2.2]{gwilliamThesis}.

\end{example}

\subsection{Quantisation of shifted Lagrangians}\label{Lagsn}


In the smooth setting, there  is a natural analogue of the shifted Lagrangians of \cite{PTVV}:
\begin{definition}\label{lagdef}
Given an $n$-shifted  pre-symplectic structure $\omega$
\[
 \omega \in \z^{n+2}\Fil^2\DR(X)^{=},
\] 
on a super dg NQ-manifold   $X$, and a morphism $\phi \co Z \to X$ of super dg NQ-manifolds,   an 
 isotropic structure on $Z$ relative to $\omega$ is  an element $(\omega, \lambda)$ of 
\[
 \z^{n+2}\cocone(\Fil^2\DR(X)\to \Fil^2\DR(Z))^{=}
\]
lifting $\omega$. This structure is called Lagrangian if $\omega$ is symplectic and if contraction of $ T_{\C^{\infty}(Z)}$ with  the image of $(\omega_2, \lambda_2)$ in 
\[
 \z^{n-1}\cone(\Omega^1_{\C^{\infty}(X)}\ten_{\C^{\infty}(X)}\Omega^1_{\C^{\infty}(Z)}
 \to \Omega^1_{\C^{\infty}(Z)}\ten_{\C^{\infty}(Z)}\Omega^1_{\C^{\infty}(Z)}
 )
\]
 induces a quasi-isomorphism
\begin{align*}
 (\omega_2, \lambda_2)^{\flat} \co  \hatTot ( T_Z\ten_{\sO_Z}\sO_Z^0)
 \to  \hatTot \cone\left( (\phi^{-1}\Omega^1_X)\ten_{\phi^{-1}\sO_X}\sO_Z^0 \to   \Omega^1_Z\ten_{\sO_Z}\sO_Z^0\right)^{[n-1]}. 
\end{align*}

We define $n$-shifted structures of reversed parity similarly, replacing $=$ with $\ne$ and applying the parity reversion operator $\Pi$ to the target of $(\omega_2,\lambda_2)^{\flat}$.
\end{definition}
In particular, this means that Lagrangian submanifolds of symplectic manifolds are Lagrangians with respect to $0$-shifted symplectic structures, but note that the morphism $Z \to X$ in Definition \ref{lagdef} need not be in any sense injective. As an extreme example, an $n$-shifted Lagrangian structure on the unique morphism $Z \to \R^0$ corresponds to an $(n-1)$-shifted symplectic structure on $Z$.
For many examples of $n$-shifted Lagrangians on NQ-manifolds, see \cite{PymSafronov}; the prototypical example is given by the embedding of a supermanifold $M$ in its $n$-shifted cotangent bundle $T^*[n]M$. 


\subsubsection{Deformation quantisations}

Co-isotropic structures are harder to describe than Poisson structures, because they rely on the notion of a $P_{k+1}$-algebra acting on a $P_k$-algebra. In the algebraic setting they are formulated in \cite{MelaniSafronovI}, and those results translate to the smooth setting with minor adjustments. An $n$-shifted co-isotropic structure on a morphism $f \co  Z \to X$ consists of an $n$-shifted Poisson structure $\varpi$ on $X$, an $(n-1)$-shifted Poisson structure $\pi$ on $Z$ and a $\hat{P}_{n+1}$-algebra morphism $(\hatTot\C^{\infty}(X),\varpi) \to \widehat{\Pol}_{\pi}(Z,n-1)$ extending $f$, subject (in the dg NQ case) to boundedness constraints similar to those in Definition \ref{bipoldef}. By \cite[Theorem 4.23]{MelaniSafronovII},
$n$-shifted Lagrangians correspond precisely to  non-degenerate $n$-shifted co-isotropic structures; this gives many examples of degenerate $(n-1)$-shifted Poisson structures as $n$-shifted Lagrangians.

Similarly, a deformation quantisation of an $n$-shifted co-isotropic structure  on $Z \to X$ can be understood for $n>0$ as a $BD_{n+1}$ quantisation $\Phi$  of $X$, a $BD_{n}$ quantisation $\Delta$  of $Z$ and a strong homotopy $BD_{n+1}$ morphism $((\hatTot\C^{\infty}(X))\brh,\Phi) \to \hbar Q\widehat{\Pol}_{\Delta}(Z,n-1)$, where the target is defined similarly to Definition \ref{qpoldef0}, but  with differential twisted by $[\Delta,-]$, and using the $BD_n$-Hochschild complex for $n>1$. 
In particular, a quantisation of a $1$-shifted  co-isotropic structure  on $Z \to X$ can be formulated as a suitable brace algebra deformation of $\O_X$ equipped with a brace algebra map to the brace algebra of polydifferential operators on an associative algebra deformation of $\O_Z$. 

For $n>1$, \cite[Theorem 5.15]{MelaniSafronovII} shows that quantisation of co-isotropic structures follows from formality of the $E_n$ operad and Dunn additivity. 
Those arguments immediately give algebraic deformations in  the smooth setting, 
and can be refined in our setting by replacing coloured operads of linear morphisms with those of polydifferential operators.

For $n=1$, quantisation of co-isotropic structures follows directly from the proof of Theorem \ref{derived0quantthm}
as in \cite[\S \ref{DQpoisson-coisosn2}]{DQpoisson}. A choice of even associator $w$ gives us an involutive equivalence $p_w$ between  $BD_2$-algebras and  $P_2\brh$-algebras, so  a $\hat{P}_{2}$-algebra morphism $(\hatTot\C^{\infty}(X),\varpi) \to \widehat{\Pol}_{\pi}(Z,0)$ combines with the equivalence $\theta_w$ between $\cPol(X,0)$ and  $p_w\cD^{\poly}_{\oplus}$ from the proof of Theorem \ref{derived0quantthm} to give us a quantisation in the form of a strong homotopy $BD_2$-algebra morphism
\[
 p_w(\hatTot\C^{\infty}(X)\brh,\varpi) \to p_w(\widehat{\Pol}_{\pi}(Z,0)\brh) \simeq \hbar Q\widehat{\Pol}_{\theta_w(\pi)}(Z,0).
\]

We now focus on the case $n=0$, in which case \cite{DQLag} (or in the classical setting \cite{BaranovskyGinzburgKaledinPecharich}) defines  a quantisation of $f^{-1}Z \to X$ to be given by a $BD_1$ quantisation   of $(X,\O_X)$ in an analogous  sense to Definition \ref{E1QPdef} (so for us an associative deformation  $\tilde{\O}_X$ of $\O_X$ given by polydifferential operators), a $BD_0$ quantisation of  $(Z,L)$ in the sense of Definition \ref{Qpoissdef0} for a sheaf $L$ of sections of a line bundle (so a deformation $\tilde{L}$ of $L$,  given by differential operators with constraints on their orders) and a suitable $f^{-1}\tilde{\O}_X$-module structure on $\tilde{L}$.

Since the precise definitions  of quantised polyvectors and quantisations \cite[Definitions \ref{DQLag-QPoldef} and \ref{DQLag-QPdef}]{DQLag}  are quite involved, we omit them here. There is again a notion of self-dual quantisation, combining those of \S\S  \ref{quant0sn}, \ref{quantneg1sn}, and \cite[Theorem \ref{DQLag-quantpropsd}]{DQLag}  adapts to the smooth setting, replacing Hochschild complexes with polydifferential operators,  to give: 

\begin{theorem}\label{quantpropsdlag}
Take  a morphism $Z \to X$ of super dg NQ-manifolds, and  a strict line bundle $M$ on $Z$ with a right $\cD$-module structure on $M^{\ten 2}$. Then for any Lagrangian structure  $(Z,\lambda)$ over a $0$-shifted symplectic structure $(X,\omega)$, a  Levi decomposition $w$ for the Grothendieck--Teichm\"uller group corresponding to an even associator gives a parametrisation of self-dual quantisations of   $(Z,\lambda) \to (X,\omega)$ with line bundle $M$   by the group
\[
 \hbar^2\H^1\cone(\DR(X) \to \DR(Z))\brhh.
\]
In particular, $w$ associates a canonical choice of self-dual $BD_0$ quantisation of  $(Z,M)$ to   every Lagrangian structure. 
\end{theorem}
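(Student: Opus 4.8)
The plan is to combine the self-dual formalisms of \S\S \ref{quantsymplectic0sn} and \ref{quantneg1sn} in a relative setting, adapting the proof of \cite[Theorem \ref{DQLag-quantpropsd}]{DQLag} (or, in the classical setting, \cite{BaranovskyGinzburgKaledinPecharich}), replacing all Hochschild complexes with complexes of polydifferential operators exactly as Theorems \ref{derived0quantthm} and \ref{quantpropsdneg1} do for the absolute cases. The first step is to set up the DGLA of quantised polyvectors $Q\widehat{\Pol}((Z,M,\lambda)\to(X,\omega))$ controlling quantisations of the datum $(Z,M,\lambda)\to(X,\omega)$: following \cite[Definitions \ref{DQLag-QPoldef} and \ref{DQLag-QPdef}]{DQLag}, this DGLA packages an $E_1$-quantisation of $(X,\O_X)$ (governed by $Q\widehat{\Pol}(X,0)$ of Definition \ref{qpoldef0}), an $E_0$-quantisation of $(Z,M)$ (governed by $Q\widehat{\Pol}(M,-1)$ of Definition \ref{qpoldefneg1}), and a compatible action of the former on the latter, encoded as in the coisotropic formalism by a morphism from the $\Br$-algebra of polydifferential operators on $X$ to that of polydifferential operators on $Z$. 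These assemble into one pro-nilpotent filtered super DGLA, filtered simultaneously by $\tau^{HH}$ on the $X$-part, by the order filtration $F$ on the $M$-part and by powers of $\hbar$; its self-dual part is cut out by the product of the involution $*$ of \S \ref{quantsymplectic0sn} on the $X$-factor (built from an even associator and the involution $i$ on $\cD^{\poly}(X)$) and the involution $\Delta^*(\hbar)=-\Delta^t(-\hbar)$ of \S \ref{quantneg1sn} on the $M$-factor, coming from the right $\cD$-module structure on $M^{\ten 2}$.

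Given a non-degenerate quantisation $\Delta$, I would form the quantised relative Poisson cohomology complex $T_\Delta$ by adding $[\Delta,-]$ to the differential of $\hbar\, Q\widehat{\Pol}((Z,M,\lambda)\to(X,\omega))$; reducing modulo $\hbar$ recovers the (coisotropic) Poisson cohomology complex of the underlying coisotropic structure, and $T_\Delta$ carries the canonical $1$-cocycle $\hbar^{2}\,\pd\Delta/\pd\hbar$. A Levi decomposition $w$ corresponding to an even associator turns the $\Br$-algebra $T_\Delta$ into a $P_2$-type algebra $p_wT_\Delta$ with a commutative product, and combining the compatibility map $\mu_w(-,\Delta)$ of Theorem \ref{quantpropsd0} on the $X$-part with the multiplicative map of Theorem \ref{quantpropsdneg1} on the relative part (the latter built on a completed variant of $\DR(Z)$ as there) yields a single compatibility map
\[
 \mu_w(-,\Delta)\co \cone(\DR(X)\to\DR(Z))\brh \to p_wT_\Delta .
\]
The Lagrangian non-degeneracy condition of Definition \ref{lagdef} is precisely what forces $\mu_w(-,\Delta)$ to be a quasi-isomorphism, so to $\Delta$ we may attach the power series $\mu_w(\Delta,-)^{-1}(\hbar^{2}\,\pd\Delta/\pd\hbar) \in \H^1\Fil^2\cone(\DR(X)\to\DR(Z)) \by \hbar\H^1\cone(\DR(X)\to\DR(Z))\brh$.

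Conversely, starting from a class in $\H^1\cone(\DR(X)\to\DR(Z))\brh$ I would try to solve $\mu_w(\omega,\Delta)\simeq\hbar^{2}\,\pd\Delta/\pd\hbar$ up to coherent homotopy, expanding $\Delta$ in powers of $\hbar$ and in the $\tau^{HH}$- and $F$-filtrations and solving for the coefficients inductively via the obstruction theory of filtered DGLAs. The leading ($\hbar^0$) term is supplied by the equivalence between non-degenerate $0$-shifted coisotropic structures and Lagrangians --- the relative analogue of Theorem \ref{compatthmdgLie}, obtained in the smooth setting by translating \cite{MelaniSafronovI} or extracted directly, since the $n=0$ compatibility map of Theorem \ref{compatthmLie} already restricts along $Z\to X$. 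For the higher coefficients, as in \cite[Proposition \ref{DQnonneg-quantprop}]{DQnonneg} and \cite[Proposition \ref{DQvanish-quantprop}]{DQvanish}, the only potential obstruction or ambiguity lies in the first-order deformation of the coisotropic structure, and adapting \cite[Lemma \ref{DQnonneg-filtsd}]{DQnonneg} together with \cite[Lemma \ref{DQvanish-quantpropsd}]{DQvanish} this vanishes on restriction to self-dual quantisations. Thus the self-dual quantisations are parametrised by $\H^1\Fil^2\cone(\DR(X)\to\DR(Z))^{=}\by\hbar^2\H^1\cone(\DR(X)\to\DR(Z))^{=}\brhh$; the factor $\H^1\Fil^2\cone(\DR(X)\to\DR(Z))$ is the space of Lagrangian structures (cf. Definition \ref{lagdef}), so for a fixed Lagrangian structure $(Z,\lambda)$ over $(X,\omega)$ the self-dual quantisations of $(Z,M,\lambda)\to(X,\omega)$ are parametrised by $\hbar^2\H^1\cone(\DR(X)\to\DR(Z))\brhh$, with the basepoint (the class of the constant power series $\omega$) furnishing the canonical choice attached to every Lagrangian structure.

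I expect the main obstacle to be the construction of this combined quantised-polyvector DGLA together with a coherent anti-involution on it: one must verify that the $E_2$-to-$P_2$ rectification $p_w$ on the ambient polydifferential operators of $X$, the transpose anti-involution on $\cDiff_{\C^{\infty}(Z)}(M,M)$ coming from $M^{\ten 2}$ being a right $\cD$-module, and the module action all interact compatibly with the brace structures, and --- as in the key new idea of \cite{DQpoisson,DQnonneg,DQvanish} --- that one can bypass the failure of formality of the ambient Hochschild-type complex for dg NQ-manifolds by appealing to the essential uniqueness of anti-involutive deformations. Keeping track of the chain, cochain, $\hbar$-adic, Hochschild/order and parity gradings across the cone $\cone(\DR(X)\to\DR(Z))$, and ensuring that the relevant $\hat{\Tot}$-completions behave well enough under tensor products for $\mu_w(-,\Delta)$ to be defined and multiplicative, is the technical heart of the argument.
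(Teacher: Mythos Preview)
Your proposal is correct and follows exactly the approach the paper indicates: the paper does not even include a proof environment for this theorem, but merely states beforehand that \cite[Theorem \ref{DQLag-quantpropsd}]{DQLag} will adapt to the smooth setting by replacing Hochschild complexes with polydifferential operators, combining the self-dual formalisms of \S\S \ref{quantsymplectic0sn} and \ref{quantneg1sn}. You have in fact written out considerably more detail than the paper provides, and your sketch of the combined DGLA, the compatibility map $\mu_w(-,\Delta)$ landing in $p_wT_{\Delta}$, and the obstruction-theoretic argument with first-order obstructions killed by self-duality accurately reconstructs how the proofs of Theorems \ref{quantpropsd0} and \ref{quantpropsdneg1} would be spliced together in the relative setting of \cite{DQLag}.
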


\begin{remark}\label{fukaya}
The characterisation of quantised Lagrangians on $X$ as modules over an  associative deformation  $\tilde{\O}_X$ of a $0$-shifted symplectic structure gives rise, for each choice of $\tilde{\O}_X$ to a dg category whose objects are quantised Lagrangians with spin structures, cf. \cite[\S \ref{DQLag-fukayasn}]{DQLag}. As in \cite[Proposition \ref{DQLag-cfvanish1}]{DQLag}, when a Lagrangian $(Z,\lambda)$ is compact, the complex of morphisms  from $(Z,\lambda,L)$ to another Lagrangian $(Z',\lambda',L')$ with spin structure  is given by a quantisation of a line bundle on the derived Lagrangian intersection $Z\by^h_XZ'$, which necessarily carries a $(-1)$-shifted symplectic structure $\lambda'-\lambda$. 

As in Examples \ref{vanishex} and \cite[Corollary \ref{DQLag-cfvanish2}]{DQLag}, when $Z\by^h_XZ'$ is a derived critical locus, the complex of morphisms is a twisted de Rham complex of a rank one local system. As discussed  in \cite[Remark 6.15]{BBDJS} and \cite[\S 3.3]{schapiraMicrolocalSurvey}, the resulting category thus resembles a Fukaya category, but is defined algebraically and includes objects corresponding to exotic Lagrangians.
\end{remark}

 \subsection{Quantisation of \texorpdfstring{$(-2)$}{-2}-shifted symplectic structures on dg NQ-manifolds}\label{quantneg2sn}

 Fix a super dg NQ-manifold $X$.
 
 \subsubsection{Formulation of \texorpdfstring{$(-2)$}{-2}-shifted quantisations}

The following definitions are adapted from  
\cite[Definitions \ref{DQ-2-htpyrightDmoddef}, \ref{DQ-2-DRrdef}, \ref{DQvanish-QFdef} and \ref{DQvanish-Qpoissdef}]{DQ-2}:

\begin{definition}\label{htpyrightDmoddef}
 We define a homotopy right $\cD$-module structure (or flat right connection) on $\cC^{\infty}(X)$ to be a sequence of maps $\nabla_{p+1} \in \hatTot\HHom_{\R}(\Lambda^pT_{\cC^{\infty}(X)}, \cC^{\infty}(X))^{1-p,=} $ for $p \ge 1$,  satisfying the following conditions:
\begin{enumerate}
 \item For $a \in \cC^{\infty}(X)$ and $\xi \in  T_{\cC^{\infty}}(X)$, we have $\nabla_2(a\xi)= a\nabla_2(\xi) - \xi(da)$;
\item For $p \ge 2$, the maps $\nabla_{p+1}$ are $\cC^{\infty}(X)$-linear;
\item The operations $(\nabla_2 -\id, \nabla_3, \nabla_4, \ldots)$ define an $L_{\infty}$-morphism from the DGLA $ \hatTot T_{\cC^{\infty}}(X)$ to the DGLA $\hatTot (\cC^{\infty}(X) \oplus  T_{\cC^{\infty}(X)})^{\op}= \hatTot F_1\cD_{\C^{\infty}(X)}^{\op}$ of first-order differential operators with bracket given by negating the commutator.
\end{enumerate}
\end{definition}

\begin{definition}\label{DRrdef}
 Given a flat right connection $\nabla$ on $\cC^{\infty}(X)$, we define the right de Rham complex $\DR^r(X,\nabla)$ associated to $\nabla$, and its increasing filtration $F$, by 
\[
 F_i \DR^r(A,\nabla):= \bigoplus_{p \le i}\hatTot\Lambda^pT_{\cC^{\infty}(X)}[p], 
\]
equipped with differential $D^{\nabla}= \sum_{k \ge 1} D^{\nabla}_k$ as follows.  Define  $D^{\nabla}_k \co \Lambda^pT_{\cC^{\infty}(X)} \to \Lambda^{p+1-k}T_{\cC^{\infty}(X)}$ by setting 
 (for 
$\omega \in \Omega^{p+1-k}_A$)  
\begin{align*}
 D^{\nabla}_k(\pi)\lrcorner \omega:= \begin{cases}
                              \nabla_k(\pi \lrcorner \omega) & k > 2,\\
                           \nabla_2(\pi \lrcorner \omega) +(-1)^{\deg \pi} \pi\lrcorner d\omega & k=2,\\
\delta(\pi\lrcorner \omega) \pm \pd(\pi \lrcorner\omega)   &k=1,
\end{cases}
\end{align*}
where $d$ is the de Rham differential and $\delta, \pd$ are induced by the differentials $\delta,\pd$ on $\cC^{\infty}(X)$.
\end{definition}
%
%
%
%

\begin{definition}
Given a flat right connection $\nabla$ on $\cC^{\infty}(X)$, define the complex 
of quantised $(-2)$-shifted polyvector fields  on $X$ by
\[
 Q\widehat{\Pol}(X,\nabla,-2):= \prod_j \hbar^{j-1}F_j\DR^r(X,\nabla).
\]
Define a decreasing filtration $\tilde{F}$ on  $Q\widehat{\Pol}(X,\nabla,-2)$ by 
\[
 \tilde{F}^iQ\widehat{\Pol}(X,\nabla,-2):= \prod_{j \ge i} \hbar^{j-1}F_j\DR^r(X,\nabla).
\]
\end{definition}
It follows as in \cite[Lemma \ref{DQ-2-DRrBVlemma}]{DQ-2}  (following \cite{kravchenko,vitagliano})
that $\DR^r(X,\nabla)$ is a form of filtered   $BV_{\infty}$-algebra. This induces an $L_{\infty}$-algebra structure on $\DR^r(X,\nabla)[-1]$ with brackets
 \[
  [a_1, \ldots, a_k]_{\nabla,k}:= [\ldots [\nabla, a_1], \ldots , a_k](1),
 \]
which extends naturally to an $\R\brh$-linear $L_{\infty}$-algebra structure on   $Q\widehat{\Pol}(X,\nabla,-2)[-1]$.

\begin{definition}\label{Qpoissdef}
Define  the space $Q\cP(X, \nabla,-2)$ of $BD_{-1}$ quantisations (named by analogy; there is no $BD_{-1}$ operad)
of $(X,\nabla)$   to be given by the simplicial
set
\[
 Q\cP(X, \nabla,-2):= \Lim_i \mmc( \tilde{F}^2 Q\widehat{\Pol}(X, \nabla,-2)^=[-1]/\tilde{F}^{i+2}).
\]

The morphism $Q\cP(X,\nabla,-2) \to \cP(X,-2)$, sending a quantisation  to its underlying $(-2)$-shifted Poisson structure, is given by the composite
 \[
  \prod_{j \ge 2} F_j\DR^r(X,\nabla)\hbar^{j-1} \to \prod_{j \ge 2} \gr^F_j\DR^r(X,\nabla)\hbar^{j-1}  \cong F^2\widehat{\Pol}(X,-2).
 \]
 since $ \gr^F_j\DR^r(X,\nabla) \cong \Symm_{\cC^{\infty}(X)}^i(T_{\cC^{\infty}(X)}[1])$.
\end{definition}

\begin{remark}[quantum BV structures]
As above, the $BV_{\infty}$-algebra $Q\widehat{\Pol}(X,\nabla,-2)$ is a deformation quantisation of the $P_0$-algebra $\widehat{\Pol}(X,-2)$, which we can think of as the algebra of functions on the formal shifted cotangent space $\hat{T}^*[-1]X$. Our $(-2)$-shifted quantisation then  deforms a given function on $\hat{T}^*[-1]X$ satisfying the Maurer--Cartan  (or master) equation to a function on its quantisation satisfying the Maurer--Cartan (or quantum master) equation.

Replacing $\Z$-grading with $\Z/2$-gradings, this means $BD_{-1}$ quantisations give rise to quantum BV structures  in the sense of \cite[3.8.1]{merkulovWheeledPropfileBV}, on an $\hbar$-deformation of the $(-1)$-shifted symplectic formal dg manifold $\hat{T}^*[-1]X$.

Quasi-isomorphisms of dg Lie algebras (or $L_{\infty}$-algebras) only induce equivalences on spaces of Maurer--Cartan solutions when the Lie algebras are nilpotent, so more general analogues of quantum BV structures in derived geometry could only be formulated  as Maurer--Cartan solutions in $\tilde{F}^2\sO_{\tilde{Y}}$ for quantisations $\tilde{Y}$  of $(-1)$-shifted symplectic formal  dg manifolds $Y$ with filtrations $F$
similar to that on $\hat{T}^*[-1]X$.
\end{remark}

\subsubsection{Quantisation for flat right connections}

Note that  $Q\widehat{\Pol}(X,\nabla,-2)/\hbar \cong \widehat{\Pol}(X,-2)$, giving a map $Q\cP(X, \nabla,-2) \to \cP(X,-2) $. We regard the fibres of this map over a shifted Poisson structure $\pi$ as quantisations of $\pi$. When $\pi$ is non-degenerate, we also refer to the fibre as the space of deformation quantisations of the corresponding shifted symplectic structure relative to $\nabla$.

\begin{theorem}\label{uniqueconn}
Given a  $(-2)$-shifted symplectic structure $\omega$ on $X$, the space of pairs $(\nabla, S)$, where $\nabla$ is a flat right connection on $\cC^{\infty}(X)$ and $S$ is a  deformation quantisation of $\omega$ relative to $\nabla$, 
is either empty or equivalent to (\emph{left} de Rham cohomology)
\[
 \hbar^2 \H^0\DR(X_0^=)\brh  
\]
 depending on whether any flat right connections on $\cC^{\infty}(X)$ exist; the potential  obstruction lies in $\H^2(F^1\DR(X))$. 

In particular, this shows that when $\pi^0X_0^=$ is connected and locally smoothly contractible, and flat right connections exist,  pairs $(\nabla, S)$  are essentially unique up to addition by $(0,\hbar^2 \R\brh)$.
\end{theorem}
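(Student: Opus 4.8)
The plan is to adapt the main results of \cite{DQ-2}, combining them with the obstruction-theoretic arguments of Theorems \ref{compatthmLie}, \ref{quantpropsd0} and \ref{quantpropsdneg1}: I would present the space of pairs $(\nabla,S)$ as the Maurer--Cartan space of a single pro-nilpotent filtered DGLA --- built from the flat-right-connection data together with the quantised $(-2)$-shifted polyvectors --- and then compute it by working up the filtration.

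First I would settle the existence of a flat right connection. By Definition \ref{htpyrightDmoddef}(3), a flat right connection is a Maurer--Cartan element of the DGLA controlling the $L_{\infty}$-morphisms $\hat{\Tot}T_{\cC^{\infty}(X)}\to\hat{\Tot}F_1\cD^{\op}_{\cC^{\infty}(X)}$. Filtering by the order of the operators and passing to the associated graded identifies the resulting deformation complex (after a shift) with a brutal truncation of $F^1\DR(X)$: the linearity conditions (2) put the higher components $\nabla_{p+1}$ into $\Omega^p_{\cC^{\infty}(X)}\cong\HHom_{\cC^{\infty}(X)}(\Lambda^pT_{\cC^{\infty}(X)},\cC^{\infty}(X))$, the failure of $\cC^{\infty}$-linearity of $\nabla_2$ is the usual connection torsor under $\Omega^1_{\cC^{\infty}(X)}$, and the $L_{\infty}$-morphism equations reduce to the de Rham relations. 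Running the obstruction theory for this filtered DGLA then shows that the only potential obstruction to the existence of $\nabla$ lies in $\H^2(F^1\DR(X))$, giving the empty/non-empty dichotomy of the statement.

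Next, fixing a flat right connection $\nabla$, I would analyse the homotopy fibre of $Q\cP(X,\nabla,-2)\to\cP(X,-2)$ over the Poisson structure $\pi$ corresponding to $\omega$. As in \cite{DQ-2}, a quantisation $S$ produces a quantised $(-2)$-shifted Poisson cohomology complex $T_SQ\widehat{\Pol}(X,\nabla,-2)$ carrying a canonical $1$-cocycle $\hbar^2\partial S/\partial\hbar$, together with a compatibility map out of the \emph{left} de Rham complex $\DR(X)\brh$ of Definition \ref{DRdefderivedLie} (or a quasi-isomorphic variant of it) --- this is why $\DR(X)$, rather than the right de Rham complex $\DR^r(X,\nabla)$ used to build the quantised polyvectors, governs the answer. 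Non-degeneracy of $\pi$, i.e.\ symplecticity of $\omega$, makes this map a quasi-isomorphism; its leading term is pinned down by the symplectic--Poisson correspondence of Theorem \ref{compatthmdgLie}, and filtering by powers of $\hbar$ and applying the obstruction theory of filtered DGLAs shows, exactly as in the proofs of Theorems \ref{quantpropsd0} and \ref{quantpropsdneg1}, that a quantisation always exists and that the remaining freedom is controlled by the degree-$0$ part of $\hbar\,\H^{\bullet}(\DR(X))\brh$.

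Finally I would assemble the two steps. Once a flat right connection exists, the space of pairs $(\nabla,S)$ becomes a torsor under $\mmc$ of a shift of $\hbar^2\DR(X)\brh$, and because of the $(-2)$-shift the bounds on the generators of $\cC^{\infty}(X)$ force the contributing cohomology into degree $0$, so no higher homotopy survives; thus $\pi_0$ is $\hbar^2\H^0(\DR(X))\brh$. Since $\DR(X)$ depends only on $\pi^0X$ (cf.\ the remark after Definition \ref{DRdefdg}), $\H^0(\DR(X))\cong\H^0(\pi^0X_0^=,\R)$, which reduces to $\R$ when $\pi^0X_0^=$ is connected; this gives the claimed answer and the uniqueness up to $(0,\hbar^2\R\brh)$. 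The hard part will be this last bookkeeping: verifying that the choices of $\nabla$ and of $S$ do not jointly produce extra path-components or higher homotopy, that the linear-in-$\hbar$ term genuinely drops out for \emph{pairs} (so the answer starts in weight $\hbar^2$), and that $\H^2(F^1\DR(X))$ is the sole obstruction --- all of which hinges on carefully tracking the interaction between the order filtration on differential operators, the Hodge-type filtration $\Fil$ on $\DR(X)$, and the vanishing forced by the degrees of the $(-2)$-shift.
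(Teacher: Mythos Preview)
Your overall strategy matches the paper's: reduce to the results of \cite{DQ-2}, arrive at a parametrisation by $\prod_{i\ge 2}\mmc(\hbar^i\DR(X)^{=}[-1])$ with homotopy groups $\pi_j\cong\hbar^2\H^{-j}\DR(X)\brh$, and then identify $\H^0$. The first two paragraphs are essentially right, and you have correctly flagged that the cancellation of the linear-in-$\hbar$ term is the delicate point handled by pairing $\nabla$ and $S$ together (this is exactly what \cite[Proposition \ref{DQ-2-uniqueconn}]{DQ-2} does).

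The gap is in your final paragraph. Your claim that ``because of the $(-2)$-shift the bounds on the generators of $\cC^{\infty}(X)$ force the contributing cohomology into degree $0$'' is incorrect: the de Rham complex $\DR(X)$ is defined independently of any shift, and $X$ is a super dg NQ-manifold with generators in both chain and cochain directions, so there is no a priori degree bound forcing $\H^{<0}\DR(X)=0$. The paper's argument is entirely different and does not use the $(-2)$-shift at all for this step. Instead, one invokes the derived de Rham comparison of \cite{FeiginTsygan,bhattDerivedDR}: the unfiltered $\DR(X)$ is computed by completing $\DR(X^0)$ along $\pi^0X$, so only depends on the $\cC^{\infty}$-ringed space $\pi^0X$. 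One then needs a smooth analogue of \cite[Theorem IV.1.1]{hartshorneDRCoho} to identify this completed de Rham cohomology with $\oR\Gamma(\pi^0X_0^{=},\R)$, which is concentrated in non-negative degrees and has $\H^0=\H^0(\pi^0X_0^{=},\R)$. The remark after Definition \ref{DRdefdg} that you cite gives the first of these ingredients (dependence only on $\pi^0X$) but not the second (the Hartshorne-type comparison with Betti cohomology), and neither step has anything to do with the symplectic shift.
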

\begin{proof}
 Combining \cite[Propositions  \ref{DQ-2-quantprop}, \ref{DQ-2-uniqueconn}, \ref{DQ-2-DMquantprop}]{DQ-2}, adapted to our setting along similar lines to \S \ref{quantneg1sn}, gives the obstruction and shows that the space is equivalent to  $\pi_j \prod_{i\ge 2}\mmc(\hbar^i\DR(X)^=[-1])$.

Homotopy groups of this space  are given by $\pi_j \prod_{i\ge 2}\mmc(\hbar^i\DR(X)^=[-1]) \cong \hbar^2\H^{-j}\DR(X)\brh$. Since we are willing to forget the filtration, we may compute $\DR(X)$ by completing $\DR(X^0)$ along $\pi^0X$ 
as in \cite{taroyanDRDerivedDG} (see Remark \ref{DRrmkdgNQ}).
Thus $\DR(X)$ has no negative cohomology groups, and our space of pairs $(\nabla, S)$ is discrete. Moreover, when $\pi^0X_0^=$ is  locally smoothly contractible, 
\cite[Proposition 4.11]{taroyanDRDerivedDG}
shows that $\DR(X_0^=)\simeq \oR\Gamma(\pi^0X^=,\R)$, so  $\H^0\DR(X)\cong \H^0(\pi^0X_0^=,\R)$.    
\end{proof}

\subsubsection{Virtual fundamental classes}

By  \cite[Theorem 3.7]{BraunLazarevHtpyBV}, there is an $L_{\infty}$-isomorphism from $\DR^r(X,\nabla)[-1] $ with the $L_{\infty}$-structure $[-]_{\nabla} $  to the complex $\DR^r(X,\nabla)[-1] $ with abelian $L_{\infty}$ structure. 
In particular, for $S \in \DR^r(A,\nabla)^0$, \cite[Remark 3.6]{BraunLazarevHtpyBV} shows that the expression  $\sum_n[S, \ldots,S]_{n, \nabla}/n!$ can be rewritten as 
$e^{-S}D_{S}^{\nabla}(e^S)$, so the  Maurer--Cartan equation $\sum_n[S, \ldots,S]_{n, \nabla}/n!=0$ is equivalent to the quantum master equation $D^{\nabla}(e^{S})=0$.

Our complex  $Q\widehat{\Pol}(X, \nabla,-2)$ is not itself a $BV_{\infty}$-algebra, but it is an $L_{\infty}$-subalgebra of $(\hbar(\DR^r(X,\nabla)[\hbar]/\hbar^r)[-1]; [-]_{\nabla})$. Therefore  sending $S$ to $e^S-1$ gives natural maps
\begin{align*}
  \pi_iQ\cP(X, \nabla,-2) &\to \hbar \H^{-i}(\DR^r(X,\nabla))\brh
\end{align*}
from quantisations to power series 
in right de Rham cohomology. Note that these are not isomorphisms, since quantisations have additional restrictions in terms of the filtration $F$, which in particular allow them to recover Poisson structures. 

In the algebraic setting, \cite[\S \ref{DQ-2-cfBJ}]{DQ-2} then interprets these as classes in Borel--Moore homology. However, that approach does not adapt to the smooth setting. We can still reduce to a right de Rham complex of a dg manifold generated in chain degrees $[0,1]$ by the methods of \cite[\S \ref{DQ-2-Gorensteinsn}]{DQ-2}, but this no longer has an interpretation as Borel--Moore homology, essentially because for co-ordinates $y_j$ in degree $1$, the right de Rham complex involves polynomials, rather than smooth functions, in the vectors $\pd_{y_j}$. 

\section{Functoriality, 
derived and higher Lie groupoids}\label{descentsn}

\subsection{\'Etale functoriality for dg NQ-manifolds}\label{functsn}
Any morphism $X \to Y$ of super dg NQ-manifolds gives rise to a filtered morphism $\DR(Y) \to \DR(X)$ of de Rham complexes, so functoriality for shifted symplectic structures and Lagrangians is straightforward.
Shifted Poisson structures and quantisations are functorial with respect to a generalisation of local diffeomorphisms, but  as in \cite[\S\S \ref{poisson-descentsn}, \ref{poisson-Artindiagramsn}]{poisson} (summarised and generalised in \cite[\S \ref{DQpoisson-intBCsn}]{DQpoisson}) this is subtle to formulate. 

Given a morphism $f \co X \to Y$, we can classify $n$-shifted Poisson structures on $X$ and $Y$ which are strictly compatible  with $f$ by replacing the space   $\Symm_{\cC^{\infty}(X)}^j(T_{\cC^{\infty}(X)}^{[-n-1]})$ of polyvectors in Definition \ref{poissdefLie} with the fibre product given by the limit of the diagram
\[
\xymatrix@R=0ex{ 
\Symm_{\cC^{\infty}(X)}^j(T_{\cC^{\infty}(X)}^{[-n-1]}) \ar[dr]\\
& \cC^{\infty}(X)\ten_{\cC^{\infty}(Y) }\Symm_{\cC^{\infty}(Y)}^j(T_{\cC^{\infty}(Y)}^{[-n-1]}). \\
\ar[ur]
\Symm_{\cC^{\infty}(Y)}^j(T_{\cC^{\infty}(Y)}^{[-n-1]})}
\]
This fibre product only behaves well when it is a homotopy fibre product, or equivalently when one of the maps in the diagram is surjective. There are two main ways this can occur: either because the map $ \cC^{\infty}(Y) \to \cC^{\infty}(X)$ is  surjective, or because the map $ T_{\cC^{\infty}(X)} \to \cC^{\infty}(X)\ten_{\cC^{\infty}(Y) }T_{\cC^{\infty}(Y)}  $ is surjective. The former is the approach taken in \cite{poisson}, but we take the latter to avoid having to use rings which are $\Z$-graded in the chain direction.  


\begin{definition}\label{quasisubdef}
 Say that a morphism $f \co X \to Y$ of dg super NQ-manifolds is a quasi-submersion if $X^{0,=}_0 \to Y^{0,=}_0$ is a submersion and $\C^{\infty}(X)^{\#}_{\#}$ is locally freely generated over $\C^{\infty}(Y)^{\#}_{\#}\ten_{\C^{\infty}(Y)^{0,=}_0}\C^{\infty}(X)^{0,=}_0$. We say that  a morphism of dg supermanifolds is a quasi-submersion if it is so when regarded as a morphism of  dg super NQ-manifolds.
\end{definition}
Note that this condition is not invariant under  $\delta$-quasi-isomorphisms; if we turn off $Q$, quasi-submersions are finitely presented cofibrations in the model structure on $G^+dg_+\C^{\infty}$ from Remark \ref{locmodelNQrmk},
so should be thought of as a computational convenience. Super dg NQ-manifolds themselves correspond to finitely presented cofibrant objects in such a model structure. (See for instance \cite{hovey} for background on model categories.)


We do not refer to these morphisms as submersions because they do not induce epimorphisms on tangent complexes in the derived category. In particular, any attempt to set up a theory of descent using quasi-submersions would destroy the notion of tangent complexes.\footnote{\label{htpysubmersiondef}The correct notion of homotopy submersion $X \to Y$ between dg manifolds  is the analogue of smoothness in derived algebraic geometry, requiring that $X$ be locally quasi-isomorphic to $X \by \R^n$; it is called smoothness in \cite[Definition 5.2.8]{nuitenThesis}. That is equivalent to requiring that
$\Omega^1_Y\ten_{\sO_Y}\sO_X \to \Omega^1_X$ have the left lifting property with respect to surjections in the homotopy category of $\sO_X$-modules in non-negatively graded chain complexes. The argument of \cite[Def 1.2.7.1 and Theorem 2.2.2.6]{hag2} gives the alternative characterisation that  $\pi^0X \to \pi^0Y$ be a submersion with $\sH_*\sO_X\cong \sH_*\sO_Y\ten_{\sH_0\sO_Y}\sH_0\sO_X$.} 


\begin{definition}\label{diagramPdef}  
Given a quasi-submersion $f \co X \to Y$ of super dg NQ-manifolds,
we define $\widehat{\Pol}(X\xra{f} Y,n)$ to be the product $\prod_{j \ge 0}$ of sum-product total complexes 
$\hatTot$ of   the fibre products given by limits of the diagrams
\[
\xymatrix@R=0ex{
  \Symm_{\cC^{\infty}(X)}^j((T_{\cC^{\infty}(X)})^{[-n-1]})\ar[dr]\\
&   \cC^{\infty}(X)\ten_{\cC^{\infty}(Y) }\Symm_{\cC^{\infty}(Y)}^j((T_{\cC^{\infty}(Y)})^{[-n-1]}). \\
   \Symm_{\cC^{\infty}(Y)}^j((T_{\cC^{\infty}(Y)})^{[-n-1]}) \ar[ur]} 
\]
We then follow Definition \ref{poissdefLie} in defining the space of Poisson structures on the diagram $f \co X \to Y$ as
\[
 \cP(X\xra{f} Y,n):= \Lim_i\mmc(\Fil^2 \widehat{\Pol}(X\xra{f} Y,n)^{[n+1]}/\Fil^{i+2}).
\]
We define $\cP(X\xra{f} Y ,\Pi n)$ and $Q\cP(X\xra{f} Y,n)$ ($n=0,-1$) similarly, adapting Definitions \ref{bipoldef}, \ref{Qpoissdef0}, \ref{E1QPdef}.
\end{definition}

Observe that restriction to either of the factors gives morphisms
\[
 \widehat{\Pol}(X,n) \la \widehat{\Pol}(X\xra{f} Y,n) \to \widehat{\Pol}(Y,n).
\]

The following is adapted from the notion of homotopy formally \'etale in \cite[Lemma \ref{smallet2-etlemma2}]{smallet2}, simplified because of our more restrictive hypotheses (cf. Remark \ref{htpyfdrmkd2})\footnote{In the general case, we have to use $\Tot^{\sqcap}$ instead of $\Tot$, but it can be replaced with $\Tot \sigma^{\le N}$, for brutal cotruncation $\sigma^{\le N}$ in the cochain direction, for $N \gg 0$.}.
\begin{definition}\label{locdiffeodef}
 A morphism $f \co X \to Y$ of super dg NQ-manifolds is said to be a homotopy  local diffeomorphism
if the map
\[
 \Tot  (\Omega_{\C^{\infty}(Y)}^1\ten_{\C^{\infty}(Y) }\C^{\infty}(X)^0) \to \Tot (\Omega_{\C^{\infty}(X) }^1\ten_{\C^{\infty}(X)}\C^{\infty}(X)^0)
\]
is a quasi-isomorphism.
\end{definition}

The following is the key to functoriality statements:
\begin{proposition}\label{hfetlemma}
 If a quasi-submersion $f \co X \to Y$ of super dg NQ-manifolds is a homotopy  local diffeomorphism
then the natural maps
\begin{align*}
 \cP(f \co  X \to Y,n) &\to \cP(Y,n)\\
\cP(f \co  X \to Y, \Pi n) &\to \cP(Y, \Pi n)\\
Q\cP(f \co  X \to Y,n) &\to Q\cP(Y,n)
\end{align*}
 are weak equivalences.

 If $f$ is a homotopy  local diffeomorphism and 
$f^0 \co \C^{\infty}(Y)^0 \to \C^{\infty}(X)^0$ is a chain quasi-isomorphism,
then the maps 
\begin{align*}
 \cP(f \co  X \to Y,n) &\to \cP(X,n)\\
\cP(f \co  X \to Y, \Pi n) &\to \cP(X, \Pi n)\\
Q\cP(f \co  X \to Y,n) &\to Q\cP(X,n)
\end{align*}
 are also  weak equivalences; in particular this holds if $f$ is a levelwise $\delta$-quasi-isomorphism.
\end{proposition}
\begin{proof}
By  \cite[Lemma \ref{poisson-bicalcTlemma2}]{poisson}, using the  the proof of \cite[Proposition \ref{poisson-tgtcor2}]{poisson}, the homotopy local diffeomorphism hypothesis leads to quasi-isomorphisms 
\[
\hatTot\Symm_{\cC^{\infty}(X)}^j(T_{\cC^{\infty}(X)}^{[-n-1]})\to \hatTot(\cC^{\infty}(X)\ten_{\cC^{\infty}(Y)} \Symm_{\cC^{\infty}(Y)}^j(T_{\cC^{\infty}(Y)}^{[-n-1]})),
\]
Specifically, that proof (simpler under our restrictions)\footnote{In the general case of Remark \ref{htpyfdrmkd2}, the tensors of tangent modules in the definition have to be replaced with $\Hom$s out of tensors of cotangent modules, and any instance of $\Tot$ has to be replaced with $\Tot \sigma^{ \le N}$, $\Tot \sigma^{\le Np}$ or $\Tot \sigma^{\le Nm}$ in the proof.} 
gives, for $A:= \C^{\infty}(X)$ or $\C^{\infty}(Y)$, $\Omega:=\Omega^1_A$ and $\bar{\Omega}:=\Omega\ten_AA^0$, a completely convergent spectral sequence  
\[
 \EExt^n_{A^0}(\Tot (\bar{\Omega})^{\ten_{A^0}p},\Tot^{\sqcap}(\gr_{\Fil}^i M)) \abuts \H^{n+i}\hatTot \cHom_A( \Omega^{\ten_A p},M)
\]
for all complete filtered $A$-modules $(M,\Fil)$ in double complexes with $M^{<0}=0$ and $A^{>0}\Fil^iM \subset \Fil^{i+1}M$, where $\cHom$ is internal $\Hom$ in double complexes. Taking $M=\cC^{\infty}(X)$ and $\Fil$ the brutal truncation filtration, then passing to symmetric invariants,  yields the required quasi-isomorphisms when we compare the two options for $A$. 

Thus the map  $\widehat{\Pol}(X\xra{f} Y,n) \to \widehat{\Pol}(Y,n)$ is a pullback along a surjective quasi-isomorphism, so is a quasi-isomorphism. The first statement for $\cP$ now follows by applying $\mmc$, and the others follow similarly, applying the towers of Appendix \ref{towersn}. 

For the second statement, set $A:=\C^{\infty}(Y)$ and $B:= \C^{\infty}(X)$, and apply the spectral sequence above, alternately taking $M$ to be $A$ or $B$, filtered by powers of the ideal $B^{>0}$.  It thus suffices for $f^0 \co A^0 \to B^0$ to be a quasi-isomorphism and for the maps $\Tot^{\sqcap}((A^{>0})^m/(A^{>0})^{m+1}) \to  \Tot^{\sqcap}((B^{>0})^m/(B^{>0})^{m+1})$ to be quasi-isomorphisms for all $m>0$. Now
 observe that $B^{>0}/(B^{>0})^2 \cong \ker(\Omega^1_{B}\ten_BB^0 \to \Omega^1_{B^0})$. When $f^0$ is a quasi-isomorphism,   $\Omega^1_{A^0}\ten_{A^0}B^0 \to \Omega^1_{B^0}$ is a quasi-isomorphism by Proposition \ref{QIMtanprop}, so when $f$ is also a homotopy local diffeomorphism we have  a quasi-isomorphism $ \Tot((A^{>0}/(A^{>0})^2)\ten_{A^0}B^0) \to \Tot( B^{>0}/(B^{>0})^2)$ and hence   $ \Tot((A^{>0})^m/(A^{>0})^{m+1})\ten_{A^0}B^0 \to \Tot( (B^{>0})^m/(B^{>0})^{m+1})$. Since $A^0 \to B^0$  is a quasi-isomorphism, this suffices. Thus the maps 
\[
 \hatTot\Symm_{\cC^{\infty}(Y)}^j(T_{\cC^{\infty}(Y)}^{[-n-1]})\to \hatTot(\cC^{\infty}(X)\ten_{\cC^{\infty}(Y)} \Symm_{\cC^{\infty}(Y)}^j(T_{\cC^{\infty}(Y)}^{[-n-1]}))
\]
are quasi-isomorphisms, yielding the second set of equivalences in the same way.

Finally, observe that if $f$ is a levelwise $\delta$-quasi-isomorphism then by Corollary \ref{locdiffeocor} it is a homotopy local diffeomorphism. 
\end{proof}

On equivalence classes of Poisson structures or of quantisations, functoriality for quasi-submersions which are homotopy  local diffeomorphisms is now clear, with a Poisson structure on $Y$ giving rise to a Poisson structure on $X$ via the maps
\[
 \pi_0\cP(Y,n) \cong  \pi_0\cP(f \co  X \to Y,n) \to \pi_0\cP( X,n).
\]

Abstract homotopy theory then permits us to extend this functor to all homotopy local diffeomorphisms of super dg NQ-manifolds, because the homotopy category of a model category is the same as that of the subcategory of cofibrations, with $\infty$-categorical generalisations as in \cite[\S \ref{DQpoisson-intBCsn}]{DQpoisson}.

Explicitly, for any $Y$, model structures guarantee existence of a ``path object'' $PY$ (cf. \cite[Definition 1.2.4]{hovey}), 
which comes with a quasi-submersion $PY \to Y \by Y$ and a levelwise quasi-isomorphism $Y \to PY$ which is a section of both the projection maps $PY \to Y$. For a sketch of the construction in our setting, see Example \ref{pathex}.

Then for any map $f \co X \to Y$, the first projection gives a quasi-submersive levelwise quasi-isomorphism $X \by_YPY \to X$ admitting a section, which combines with the second projection $X \by_YPY \to Y$ to factorise $f$. 
When $X$ is a homotopy local diffeomorphism,  repeated application of Proposition \ref{hfetlemma} gives
\begin{align*}
 \pi_0\cP(Y,n) \to &\pi_0\cP(  X\by_YPY \to Y,n)\\
&  \cong \pi_0\cP(X\by_YPY,n)\\
& \cong \pi_0\cP(X\by_YPY \to X,n)\\
& \cong  \pi_0\cP(X,n).
\end{align*}
This suffices in cases such as $2$-shifted structures   in Example \ref{exCasimir} 
where $\cP$ has no  homotopy groups, but in general descent and gluing arguments for these structures are required, involving $\infty$-categories, or $m$-categories if the higher homotopy groups $\pi_{>m}\cP(X,n)$ vanish.  As in \cite[\S \ref{poisson-descentsn}]{poisson}, the data here are best suited to   construct  complete Segal spaces in the sense of \cite[\S 6]{rezk}.

\subsection{Lie groupoids}\label{liegpdsn}

Say we have a Lie groupoid 
$\fX:=(X_1 \Rightarrow X_0)$, so  $X_0$ and $X_1$ are manifolds (regarded as the spaces of objects and of  morphisms), we have an identity section $\sigma_0 \co X_0 \to X_1$, source and target maps $\pd_0, \pd_1 \co X_1 \to X_0$, and an associative multiplication $X_1\by_{\pd_0,X_0,\pd_1}X_1 \to X_1$. We can then form the nerve
\[
B\fX:= (\xymatrix@1{ X_0 \ar@{.>}[r]& \ar@<1ex>[l] \ar@<-1ex>[l] X_1 \ar@{.>}@<0.75ex>[r] \ar@{.>}@<-0.75ex>[r]  & \ar[l] \ar@/^/@<0.5ex>[l] \ar@/_/@<-0.5ex>[l] 
X_2 &  &\ar@/^1pc/[ll] \ar@/_1pc/[ll] \ar@{}[ll]|{\cdot} \ar@{}@<1ex>[ll]|{\cdot} \ar@{}@<-1ex>[ll]|{\cdot}  X_3  & & \cdots\ar@/^1.2pc/[ll] \ar@/_1.2pc/[ll]\ar@{}[ll]|{\cdot} \ar@{}@<1ex>[ll]|{\cdot} \ar@{}@<-1ex>[ll]|{\cdot}
})
\]
by setting 
\[
 X_m := \overbrace{X_1\by_{\pd_0,X_0,\pd_1}X_1 \by_{\pd_0,X_0,\pd_1} \ldots \by_{\pd_1,X_0,\pd_0}X_1}^m,
\]
the space of $m$-strings of morphisms.

If we wish to define Poisson structures and quantisations for our Lie groupoid $\fX$,  
then we encounter the difficulty that Proposition \ref{hfetlemma} only applies to local diffeomorphisms rather than submersions, so we could only apply descent arguments directly  if the source and target maps were local diffeomorphisms, meaning $\fX$ would be an orbifold.

As in  \cite[\S \ref{poisson-cfArtinstackysn}]{poisson} or \cite{CPTVV}, the solution is to resolve the Lie groupoid by Lie algebroids, in the form of NQ-manifolds. 
\begin{definition}
 Given a Lie groupoid $ \fX:=(X_1 \Rightarrow X_0)$, we define the NQ-manifold $(\hat{X}_1\Rightarrow X_0)=(X_0, \O_{(\hat{X}_1\Rightarrow X_0) })$ 
by first forming the Lie algebroid $A\fX$ associated to the Lie groupoid $\fX$ as in \cite{mackenzieLieGpdLieAlgd}, then taking the associated NQ-manifold as in \cite{LiuWeinsteinXu}.
\end{definition}


\begin{example}
If $G$ is a Lie group acting on a manifold $M$,  there is a Lie groupoid  $[M/G]:= (G \by M \Rightarrow M)$, and then we have
\[
 (\widehat{G \by M} \Rightarrow M) \cong [M/\g],
\]
for the NQ-manifold $[M/\g]:= (M, \O_{[M,\g]})$, where $\O_{[M,\g]}$ is given by the Chevalley--Eilenberg complex
\[
\O_{[M,\g]}:= \mathrm{CE}(\g,\O_M)=(  \O_M \xra{Q} \O_M\ten\g^* \xra{Q} \O_M\ten\Lambda^2\g^*\xra{Q} \ldots).
\]
\end{example}

\begin{definition}\label{completiondef}
 Given a Lie groupoid $ \fX:=(X_1 \Rightarrow X_0) $ and a submersion $Y_0 \to X_0$, we define the NQ-manifold $\hat{\fX}_{Y_0}$ (the completion of $\fX$ along $Y_0$) by first setting $Y_1:=Y_0\by_{X_0,\pd_1}X_1\by_{\pd_0,X_0}Y_0$ and then
\[
 \hat{\fX}_{Y_0}:= ( \hat{Y}_1  \Rightarrow  Y_0),
\]
for the Lie groupoid $( Y_1  \Rightarrow  Y_0)$ homotopy equivalent to $\fX$ with objects $Y_0$.
\end{definition}
In particular, note that $ \hat{\fX}_{X_0}= (\hat{X}_1\Rightarrow X_0)$ and that the construction of $\hat{\fX}_{Y_0} $ is invariant if we replace $\fX$ with a homotopy equivalent Lie groupoid.

%
The examples we are interested in are $ \hat{\fX}_{X_j}$, which can alternatively be written as
$
 ( \widehat{(X^{\Delta^j})}_1 \Rightarrow (X^{\Delta^j})_0),
$
the completion of the groupoid 
\[
 ( (X^{\Delta^j})_1 \Rightarrow (X^{\Delta^j})_0) =(X_j\by_{(X_0)^{j+1}, \pd_0}(X_1)^{j+1} \Rightarrow X_j)
\]
  of $j$-strings of morphisms and commutative diagrams between them.

We are now in a position to define Poisson structures and quantisations for Lie groupoids. We can think of the Lie algebroid $A\fX$ as being an infinitesimal neighbourhood of $X_0$ in $\fX$, so that in particular we have a local diffeomorphism $A\fX \to \fX$, and then the diagram $j\mapsto  \hat{\fX}_{X_j}$ gives a simplicial resolution of $\fX$ by NQ-manifolds which are quasi-submersively locally diffeomorphic to it\footnote{Specifically, \cite[Corollary \ref{smallet2-gooddescent}]{smallet2} gives a fully faithful functor $D_*$ from differentiable stacks to stacks on   a site of $\C^{\infty}$-DGAs, and  \cite[Theorem \ref{smallet2-mainthm}]{smallet2} shows that $j\mapsto  \hat{\fX}_{X_j}$ gives an \'etale resolution of  $D_*\fX$. However, we will take a more pedestrian approach, with  Proposition \ref{biinftyFXwell0} showing by hand that constructions are independent of the choice of atlas $X_0 \to \fX$.}. This allows us  to pull back Poisson structures and quantisations, so as in \cite[Definition \ref{poisson-PdefArtin}]{poisson} we can infer what the spaces of  Poisson structures and quantisations on $\fX$ have to be by looking at the corresponding spaces on the NQ-manifolds $\hat{\fX}_{X_j}$:

\begin{definition}\label{preDstardef}
Given a Lie groupoid $\fX =(X_1 \Rightarrow X_0)$, with nerve $X_{\bt}:=B\fX$,  we define the space $\cP(\fX,n)$ of $n$-shifted Poisson structures and the space $Q\cP(\fX,n)$ of  $BD_{n+1}$ quantisations (the latter for $n=0,-1$) by first forming the simplicial NQ-manifold
\[
\hat{\fX}_{B\fX}:= (\xymatrix{ \hat{\fX}_{X_0} \ar@{.>}[r]& \ar@<1ex>[l] \ar@<-1ex>[l] \hat{\fX}_{X_1} \ar@{.>}@<0.75ex>[r] \ar@{.>}@<-0.75ex>[r]  & \ar[l] \ar@/^/@<0.5ex>[l] \ar@/_/@<-0.5ex>[l] 
\hat{\fX}_{X_2} &  &\ar@/^1pc/[ll] \ar@/_1pc/[ll] \ar@{}[ll]|{\cdot} \ar@{}@<1ex>[ll]|{\cdot} \ar@{}@<-1ex>[ll]|{\cdot}  \hat{\fX}_{X_3}  & & \cdots\ar@/^1.2pc/[ll] \ar@/_1.2pc/[ll]\ar@{}[ll]|{\cdot} \ar@{}@<1ex>[ll]|{\cdot} \ar@{}@<-1ex>[ll]|{\cdot}
}),
\]
then observing that the morphisms $\pd_i$ in the diagram are all homotopy local diffeomorphisms of NQ-manifolds in the sense of Proposition \ref{hfetlemma}, giving functoriality and allowing us to take homotopy limits
\begin{align*}
 \cP(\fX,n)&:= \ho\Lim_{j \in \Delta} \cP(\hat{\fX}_{X_j},n),\\
Q\cP(\fX,n)&:= \ho\Lim_{j \in \Delta} Q\cP(\hat{\fX}_{X_j},n).
\end{align*}
\end{definition}
These homotopy limits of cosimplicial spaces are given by the functor $\Tot_{\bS}$ of  \cite[\S VIII.1]{sht}; for  generalities on homotopy limits, see \cite{bousfieldkan,hovey,hirschhorn}, and for simplifications in special cases see e.g. \cite[Examples \ref{stacks2-categs}]{stacks2}. In particular, the homotopy limit of a cosimplicial diagram of chain complexes corresponds to a product total complex, with a similar statement for cosimplicial diagrams of simplicial abelian groups via the Dold--Kan correspondence.\footnote{In particular, that implies that for diagrams of simplicial abelian groups, the homotopy limit over $\Delta$ is equivalent to the homotopy limit over the subcategory $\Delta^+ \subset \Delta$ of injective morphisms, corresponding to semi-cosimplicial diagrams. It thus follows that we could just take homotopy limits over $\Delta^+$ in the definitions of $\cP(\fX,n)$ and $Q\cP(\fX,n)$, since the spectral sequences of groups and sets in \S \ref{filttowersn} reduce the comparison to abelian graded pieces.}

When the spaces $\cP(\hat{\fX}_{X_j},n)$  have trivial homotopy groups (but non-trivial $\pi_0$), as occurs when $n=2$, our homotopy limit above is just the equaliser of the maps
\[
 \pi_0\cP(\hat{\fX}_{X_0},n) \Rightarrow \pi_0\cP(\hat{\fX}_{X_1},n),
\]
and similarly for $ Q\cP(\fX,n)$. 

When the homotopy groups stop at $\pi_1$, as occurs for $n=1$, we instead have to include the datum of an isomorphism $g$ in the fundamental groupoid $\pi_f\cP(\hat{\fX}_{X_1},n)$ between the two images $\pd^0(\pi), \pd^1(\pi)$ of a Poisson structure $\pi \in \cP(\hat{\fX}_{X_0},n)$, satisfying the cocycle condition $\pd^1(g) \simeq \pd^2(g) \circ  \pd^0(g)$ in $\pi_f\cP(\hat{\fX}_{X_2},n) $. Note that the condition $\sigma^0(g)=\id$ is then automatically satisfied, because the cocycle condition implies $\sigma^0(g)= \sigma^0(g) \circ \sigma^0(g)$. 

In general, since the tangent complex is concentrated in cohomological degrees $[-1, \infty)$, the space $ \cP(\hat{\fX}_{X_1},n)$ is empty for $n \ge 3$ because the governing DGLA $\widehat{\Pol}(X,n)^{[n+1]}$ is concentrated in cohomological degrees $[n-1, \infty)$. Meanwhile, for $n<0$ the DGLA is unbounded below in general, so there is no bound on the homotopy groups,  
further complicating the description.

\begin{example}\label{exCasimirGpd}
When $\fX=[M/G]$, then $X_{\bt}:=B\fX$ is given by $X_m= M \by G^m$ and  as in \cite[Example \ref{poisson-DstarBG}]{poisson},  $\fX_{X_m} $
is  the NQ-manifold $[M \by G^m/\g^{\oplus(m+1)}]$ associated to the Lie groupoid  $[M \by G^m/G^{m+1}]$, with action  
\[
 (y,h_1, \ldots,h_m)(g_0, \ldots,g_m)= (y g_0, g_0^{-1}h_1g_1,g_1^{-1}h_2g_2, \ldots, g_{m-1}^{-1}h_mg_m).
\]
 
As in \cite[Example \ref{poisson-2PBG}]{poisson}, the triviality of homotopy groups then gives the space of $2$-shifted Poisson structures as 
\[
 \cP( [M/G],2) \cong \{ \pi \in (S^2\g \ten \C^{\infty}(M))^G ~:~ [\pi, a]=0 \in \g \ten \C^{\infty}(M) ~\forall a \in O(M)\}.
\]
For more general Lie groupoids, there is a similar description of $2$-shifted Poisson structures as invariant $2$-shifted Poisson structures on the associated Lie algebroid.
\end{example}

\begin{example}\label{exQuasiPoisson}
To construct a  $1$-shifted Poisson structure on $[M/G]$,  we start with a $1$-shifted Poisson structure $\pi$ on the NQ-manifold $[M/\g]$, which as in Example \ref{exQuasiLie} 
is the same as a quasi-Lie bialgebroid structure. For each of the morphisms $\pd_i \co [M \by G/\g\oplus \g] \to [M/G]$ ($i=0,1$) in the nerve,  Proposition \ref{hfetlemma} then gives essentially unique quasi-Lie bialgebroid structures $\pd^i\pi$ on $[M \by G/\g\oplus \g]$; in the terminology of \cite[\S 2.2]{BonechiCiccoliLaurentGengouxXu}, $\pd^i\pi$ is projectable to $[M/\g]$ along $\pd_i$. To complete the data required to define a $1$-shifted Poisson structure on $[M/G]$, we then need 
 a gauge transformation, i.e. a twist $\lambda \in \L^2(\g \oplus \g) \ten \C^{\infty}(M\by G)$ between the two pullbacks $\pd^0\pi, \pd^1\pi$ of the Poisson structure to $[M \by G/\g\oplus \g]$, and $\lambda$ must satisfy the cocycle condition $\pd^1\lambda = \pd^0\lambda + \pd^2\lambda$ on $[M \by G^2/\g^{\oplus(3)}]$. An isomorphism $(\pi, \lambda) \to (\pi',\lambda')$ is then given by a twist $\tau \in \L^2(\g) \ten \C^{\infty}(M)$ satisfying $ \pd^1\tau + \lambda = \lambda' + \pd^0\tau$.


When $M$ is a point, $\pi$ is trivial and $\lambda$ is a quasi-Poisson structure on $G$,
 a generalisation of a Poisson-Lie structure; see  \cite[Theorem 2.9]{safronovPoissonLie} for the corresponding statement in the setting of \cite{CPTVV}.

By \cite[Theorem 3.29]{safronovPoissonLie}, a source-connected smooth algebraic quasi-Poisson groupoid in the sense of \cite{IPLGX} corresponds to a $1$-shifted Poisson structure on the Lie groupoid, and a similar argument should apply in the $\C^{\infty}$ setting. We can certainly say that every quasi-Poisson structure gives rise to a $1$-shifted Poisson structure: by Morita equivalence, a quasi-Poisson structure  \cite[Theorem 3.11]{BonechiCiccoliLaurentGengouxXu} on  a Lie groupoid $\fX:=(X_1 \Rightarrow X_0)$ gives rise to essentially unique quasi-Poisson structures on the Lie groupoids $((X^{\Delta^j})_1 \Rightarrow X_j)$, and hence to compatible quasi-Lie bialgebroid structures on the Lie algebroids $\hat{\fX}_{X_j}$, by  \cite[Theorem 4.9]{IPLGX}, which is precisely what it means to give a $1$-shifted Poisson structure on $\fX$. To complete the equivalence, it would suffice to compare the tangent and obstruction spaces as in \cite[\S \ref{poisson-towersn}]{poisson}; in the notation of \cite{
BonechiCiccoliLaurentGengouxXu}, this would amount to showing that the  $2$-term complex $\Sigma^{k+1}(
A) \to \cT_{mult}^k\Gamma$ is good truncation of the complex $C^{\bt}(\Gamma, S^{k+1} T_{\fX})$ in degrees $\le -k$ for $k=1,2$, which in the source-connected case should follow from \cite[Proposition 2.35 and Equation (18)]{IPLGX} as in \cite{safronovPoissonLie}.


\end{example}

The following is analogous to a special case of \cite[Proposition \ref{poisson-biinftyFXwell}]{poisson}. In particular, it ensures that $n$-shifted Poisson structures and quantisations are invariant under Morita equivalence, so are invariants of differentiable stacks. For $\cP(\fX,1)$, this plays the same role as \cite[Theorem 3.11]{BonechiCiccoliLaurentGengouxXu} does for quasi-Poisson structures.
\begin{proposition}\label{biinftyFXwell0}
 Given a  Lie groupoid $ \fX:=(X_1 \Rightarrow X_0) $ and a submersion $f_0 \co Y_0 \to X_0$, let $\fY$ be the Lie groupoid $(Y_0 \by_{X_0}X_1\by_{X_0}Y_0 \Rightarrow Y_0)$ given by pulling $\fX$ back to $Y_0$. 
Then there are natural maps
\begin{align*}
 f^* \co \cP(\fX,n) &\to \cP(\fY,n)\\
f^* \co Q\cP(\fX,n) &\to \cP(\fY,n)
\end{align*}
  in the homotopy category of simplicial sets.
If $f_0$ is surjective,  
then these maps 
 are  weak equivalences.
\end{proposition}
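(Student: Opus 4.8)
The plan is to reduce to the known statement about simplicial diagrams of NQ-manifolds (Definition~\ref{preDstardef}) by comparing the two resolutions. Given the submersion $f_0\co Y_0 \to X_0$ and the pulled-back Lie groupoid $\fY$, the first step is to observe that for each $j$ the nerve $B\fY$ has $(Y_{\Delta^j})_0 = Y_j = Y_0 \times_{X_0} X_j$, and that the associated completions $\hat\fY_{Y_j}$ of Definition~\ref{completiondef} are precisely the completions $\hat\fX_{Y_j}$ of $\fX$ along the submersions $Y_j \to X_j \to X_0$. Indeed $\hat\fX_{Y_0}$ only depends (up to the equivalences of Remarks~\ref{derivedstackyrmk}) on the Lie groupoid $\fX$ up to Morita equivalence together with the submersion to $X_0$, and $Y_j \to X_0$ factors through $X_j$ with $\hat\fX_{Y_j}$ being the completion of the Morita-equivalent groupoid $((X^{\Delta^j})_1\Rightarrow X_j)$ along $Y_j$. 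So the simplicial NQ-manifold $\hat\fY_{B\fY}$ is obtained from $\hat\fX_{B\fX}$ by pulling back each level along the submersions $Y_j \to X_j$.

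Next, for each $j$ the structure map $\hat\fX_{Y_j} \to \hat\fX_{X_j}$ is a quasi-submersion (since $Y_j \to X_j$ is a submersion and the completion adds the same Lie-algebroid generators over the pulled-back manifold) which is a homotopy local diffeomorphism in the sense of Proposition~\ref{hfetlemma}: the cotangent complex of a completion $\hat\fX_{Y_0}$ restricted to $\pi^0$ is the pullback of the corresponding Lie-algebroid cotangent complex along $Y_0 \to X_0$, so the map on cotangent complexes tensored down to $\C^\infty$-functions is an isomorphism. Proposition~\ref{hfetlemma} therefore supplies natural weak equivalences $\cP(\hat\fX_{X_j} \leftarrow \hat\fX_{Y_j},n) \xrightarrow{\sim} \cP(\hat\fX_{X_j},n)$ and $\cP(\hat\fX_{X_j}\leftarrow\hat\fX_{Y_j},n)\xrightarrow{\sim}\cP(\hat\fX_{Y_j},n)$, and likewise for $Q\cP$; these are visibly compatible with the cosimplicial structure maps, so taking $\ho\!\Lim_{j\in\Delta}$ gives a zigzag
\[
 \cP(\fX,n) = \ho\!\Lim_j \cP(\hat\fX_{X_j},n) \xleftarrow{\sim} \ho\!\Lim_j \cP(\hat\fX_{X_j}\leftarrow\hat\fX_{Y_j},n) \xrightarrow{\ } \ho\!\Lim_j \cP(\hat\fX_{Y_j},n) = \cP(\fY,n),
\]
whose right-hand arrow is $f^*$; the same construction works for $Q\cP$, using that the forgetful map $Q\cP \to \cP$ of Proposition~\ref{hfetlemma} is also compatible. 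This produces the natural maps in the homotopy category as required.

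It remains to show $f^*$ is a weak equivalence when $f_0$ is surjective. Since each level map $\hat\fX_{Y_j}\to\hat\fX_{X_j}$ is already a homotopy local diffeomorphism, the first half of Proposition~\ref{hfetlemma} shows the right-hand arrow above is a levelwise weak equivalence of cosimplicial spaces, hence induces an equivalence on $\ho\!\Lim$; but this only uses $f_0$ being a submersion, not surjectivity. The role of surjectivity is more subtle: it is needed so that $\fY \to \fX$ is a Morita equivalence, equivalently so that the \v{C}ech-type bisimplicial diagram obtained from $Y_0 \to X_0$ has the simplicial diagram $\hat\fX_{B\fX}$ as its homotopy colimit/limit along the new direction, making $\ho\!\Lim_{j}\cP(\hat\fX_{Y_j},n)$ compute the same thing whether one resolves via $B\fX$ or via $B\fY$. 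Concretely, one checks that when $f_0$ is surjective the augmented cosimplicial object $[j]\mapsto \hat\fX_{Y_j}$ refines to a resolution by $\hat\fX_{B\fX}$ up to the homotopy-local-diffeomorphism equivalences, so that the comparison of the two homotopy limits is an equivalence; this is exactly the content carried over from \cite[Proposition~\ref{poisson-biinftyFXwell}]{poisson}, whose proof uses the model-categorical fact that $\ho\!\Lim$ over $\Delta$ is insensitive to such refinements together with a cofinality argument on the relevant index categories. \textbf{The main obstacle} is precisely this last point: verifying that surjectivity of $f_0$ makes the comparison of the two cosimplicial resolutions a weak equivalence of homotopy limits, which requires unwinding the interaction of the completion functor $\hat\fX_{(-)}$ with pullbacks and checking a cofinality/descent statement for the double simplicial diagram, rather than any new input about Poisson structures themselves.
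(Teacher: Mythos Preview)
Your construction of $f^*$ via the zigzag is essentially correct, but you over-claim in the text preceding it. Proposition~\ref{hfetlemma} only makes the \emph{left} arrow
\[
\cP(\hat\fX_{Y_j}\xrightarrow{}\hat\fX_{X_j},n)\ \xrightarrow{\ \sim\ }\ \cP(\hat\fX_{X_j},n)
\]
a weak equivalence: in the notation of that proposition the map is $f\co X=\hat\fX_{Y_j}\to Y=\hat\fX_{X_j}$, and the first statement gives the equivalence to $\cP(Y,n)$. The projection to the source, $\cP(\hat\fX_{Y_j}\to\hat\fX_{X_j},n)\to\cP(\hat\fX_{Y_j},n)$, would need the \emph{second} statement, i.e.\ that $\hat\fX_{Y_j}\to\hat\fX_{X_j}$ be a levelwise quasi-isomorphism, which it is not (on $\cC^{\infty}(-)^0$ it is restriction along the genuine submersion $Y_j\to X_j$). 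One invertible leg suffices to define $f^*$ in the homotopy category, so the existence statement survives; your display of the zigzag even marks only the left arrow with $\sim$, but the surrounding text asserts both are equivalences, which is wrong.

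This same confusion then undermines your argument for the weak-equivalence claim. You write that ``the first half of Proposition~\ref{hfetlemma} shows the right-hand arrow above is a levelwise weak equivalence'', but the first half concerns the \emph{other} projection; you then correctly sense something is amiss because surjectivity has not been used. Your fallback to a descent/cofinality argument is the right instinct, but the mechanism you describe is too vague and not quite the one the paper uses. The paper's route is to filter by $\Fil$ (resp.\ $\tilde{F}$, $\tilde{\tau}_{HH}$) and invoke DGLA obstruction theory, which reduces the comparison of the two homotopy limits to comparing cohomology of the cosimplicial complexes of associated-graded polyvectors on $j\mapsto\hat\fX_{X_j}$ and $j\mapsto\hat\fX_{Y_j}$. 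When $f_0$ is a surjective submersion, $B\fY\to B\fX$ is a hypercover and cohomological descent for these Cartesian sheaves of polyvectors gives the required isomorphisms on obstruction spaces. That concrete descent step --- not an abstract cofinality argument on $\Delta$ --- is what does the work, and it is what you should supply.
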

\begin{proof}
This follows as in the proof of \cite[Proposition \ref{poisson-inftyFXwell}]{poisson}. By the construction of $\fY$, we have $\hat{\fY}_{Y_j}\cong \hat{\fX}_{Y_j}$.
Since $f_0$ is a submersion, the maps $\hat{\fY}_{Y_j} \to\hat{\fX}_{X_j}$ are then all homotopy  local diffeomorphisms, so Proposition \ref{hfetlemma} gives  compatible maps $\cP(\hat{\fX}_{X_j},n) \to \cP(\hat{\fY}_{Y_j},n)$, and hence $f^* \co \cP(\fX,n) \to \cP(\fY,n)$ on passing to homotopy limits. DGLA obstruction theory gives rise to towers of obstruction spaces for the homotopy limits, and cohomological descent ensures that these are isomorphisms when $f_0$ is surjective.
\end{proof}

\subsection{Higher Lie groupoids}\label{higherliegpdsn}



\subsubsection{Super Lie $k$-groupoids}

\begin{definition}
 Given a simplicial manifold $X_{\bt}$ and a simplicial set $K$, we follow \cite{zhu} in writing $\mathrm{hom}(K,X_{\bt})$ for the set of homomorphisms of simplicial manifolds from $K$ to $X_{\bt}$, with its natural topology. 
As in \cite[Lemma 2.1]{zhu}, when $K$ is a finite contractible simplicial set,  $\mathrm{hom}(K,X_{\bt})$ is naturally a manifold. 
\end{definition}
In particular, note that the combinatorial $m$-simplex $\Delta^m$ is contractible, with  $\mathrm{hom}(\Delta^m,X_{\bt})=X_m$. 
Another important class of contractible finite simplicial sets is given by the horns $\L^{m,i}\subset \Delta^m$ for all $m\ge 1$, $ 0 \le i \le m$, which are defined by removing the interior and the $i$th face from $\Delta^m$.

We now work with Lie $k$-groupoids in the sense of \cite[Definition 1.2]{zhu}, generalised in the obvious way with supermanifolds rather than just manifolds. In particular, for a simplicial supermanifold $X_{\bt}$ and a finite contractible simplicial set $K$, there is a natural supermanifold $\mathrm{hom}(K,X_{\bt})$, defined as a limit of the supermanifolds $X_m$. By \cite[Theorem \ref{stacks2-bigthm} and Examples \ref{stacks2-affhgpddef}]{stacks2}, the  $\infty$-category given by simplicially localising Lie $k$-groupoids at submersive hypercovers (see Remark \ref{highermoritarmk}) is equivalent to that of ($k$, surjective submersion)-geometric stacks in supermanifolds  defined inductively by analogy with \cite{hag2}.

\begin{definition}\label{superLiegpddef}
 A super Lie $k$-groupoid $X_{\bt}$ is a simplicial diagram 
\[
 \xymatrix{ X_0 \ar@{.>}[r]& \ar@<1ex>[l] \ar@<-1ex>[l] X_1 \ar@{.>}@<0.75ex>[r] \ar@{.>}@<-0.75ex>[r]  & \ar[l] \ar@/^/@<0.5ex>[l] \ar@/_/@<-0.5ex>[l] 
X_2 &  &\ar@/^1pc/[ll] \ar@/_1pc/[ll] \ar@{}[ll]|{\cdot} \ar@{}@<1ex>[ll]|{\cdot} \ar@{}@<-1ex>[ll]|{\cdot}  X_3  & & \cdots\ar@/^1.2pc/[ll] \ar@/_1.2pc/[ll]\ar@{}[ll]|{\cdot} \ar@{}@<1ex>[ll]|{\cdot} \ar@{}@<-1ex>[ll]|{\cdot}
}
\]
of supermanifolds, with the partial matching maps
\[
 X_m \to \mathrm{hom}(\L^{m,i},X_{\bt})
\]
being surjective submersions for all $m\ge 1$, $ 0 \le i \le m$, and diffeomorphisms for $m >k$.
\end{definition}

\begin{examples}
Giving a super Lie $0$-groupoid $X_{\bt}$ is equivalent to giving the supermanifold $X_0$, since the matching conditions imply that $X_m=X_0$ for all $m$.

Meanwhile, a super Lie $1$-groupoid is just equivalent to a super Lie groupoid: we have  supermanifolds $X_0$ and $X_1$ (regarded as the objects and the morphisms), an identity $\sigma_0 \co X_0 \to X_1$, source and target maps $\pd_0, \pd_1 \co X_1 \to X_0$ and diffeomorphisms
\[
 X_m \cong \overbrace{X_1\by_{X_0}X_1 \by_{X_0} \ldots \by_{X_0}X_1}^m,
\]
with the face map $X_1\by_{X_0}X_1 \cong  X_2  \xra{\pd_1} X_1$ thus giving rise to the multiplication operation, and the higher conditions ensuring associativity.
\end{examples}

The partial matching conditions on a Lie $k$-groupoid $X_{\bt}$ imply that the boundary maps $\pd_i \co X_m \to X_{m-1}$ are all submersions. When they are local diffeomorphisms, $X_{\bt}$ is a form of higher orbifold, and we can define Poisson structures and quantisations simply by appealing to Proposition \ref{hfetlemma}: we just have to have a compatible system of Poisson structures or quantisations on the diagram $X_{\bt}$.

We now generalise the construction of Definition \ref{preDstardef} to apply to super Lie $k$-groupoids.
For any commutative cochain algebra $B= B^{\ge 0}$, the Dold--Kan denormalisation $DB$ is naturally a cosimplicial commutative algebra via the Eilenberg--Zilber shuffle product, as for instance in \cite[Definition \ref{ddt1-nabla}]{ddt1}. This functor has a left adjoint $D^*$, which we now describe explicitly.
Given a finite set  $I$ of strictly positive integers, write $\pd^I= \pd^{i_s}\ldots\pd^{i_1}$, for $I=\{i_1, \ldots i_s\}$, with $1 \le i_1 < \ldots < i_s$. 

\begin{definition}\label{Dstardef}
Given a commutative cosimplicial super algebra $A$, we define the commutative super cochain algebra $D^*A$ as follows. We first consider the cochain complex $NA$ given by
\[
 N^mA:= \{a \in A^m ~:~ \sigma^ja = 0 \in A^{m-1}, ~\forall~ 0 \le i <m\},
\]
with differential $Qa:= \sum(i-1)^i \pd^ia$. We then define an associative (non-commutative) product $\smile$ (a variant of the Alexander--Whitney cup product) on $NA$ by 
\[
a \smile b := (\pd^{[m+1,m+n]}a)\cdot (\pd^{[1,m]}b)  
\]
for  $a \in N^mA$, $b \in N^nA$. 

The commutative cochain algebra  $D^*A$ is then  the quotient of $NA$ by the relations
\[
(\pd^Ia)\cdot (\pd^J b) \sim \left\{ \begin{matrix} (-1)^{(J, I)} (a\smile b) & a \in A^{|J|},~ b\in A^{ |I|},\\ 0 & \text{ otherwise},\end{matrix} \right.
\]
for (possibly empty) sets $I,J$ with $I \cap J= \emptyset$,
where for disjoint sets $S,T$ of integers, $(-1)^{(S,T)}$ is the sign of the shuffle permutation of $S \sqcup T $ which sends the first $|S|$ elements to $S$ (in order), and the remaining $|T|$ elements to $T$ (in order). 
\end{definition}

\begin{remarks}
When a cosimplicial $\R$-super algebra $A^{\bt}$ is quasi-freely generated over $A^0$ in the sense that 
we have super vector spaces $V^m \subset A^m$ with $A^m\cong A^0\ten\Symm_{\R}V^m$, closed under degeneracy maps, so $\sigma^i(V^m) \subset V^{m-1}$, then $D^*A \cong A^0\ten\Symm_{\R}(NV)$ if we forget the differential $Q$.   

As observed in \cite[\S \ref{poisson-stackyCDGAsn}]{poisson}, $D^*A$ depends only on the formal completion of $A$ with respect to $\ker(A \to A^0)$, and then the description above also applies if the completion is a quasi-freely generated cosimplicial power series ring over $A^0$.

As in \cite[Example \ref{poisson-DstarBG}]{poisson}, if $A$ is the cosimplicial ring of functions on the nerve of the Lie groupoid $[M/G]$ (so $A^m = \C^{\infty}(M\by G^m)$), then $D^*A$ is the Chevalley--Eilenberg complex $\mathrm{CE}(\g, \C^{\infty}(M))$.

\end{remarks}

The following generalise the passage from Lie groupoids to Lie algebroids (cf. Definition \ref{preDstardef}):

\begin{definition}
 Given a simplicial supermanifold $X_{\bt}$, define the normalisation $NX$ to be manifold $X^{=}_0$ equipped with the super cochain algebra $D^*( (\sigma^0)^{-\bt}\O_X)$, where $(\sigma^0)^{-\bt}\O_X $ is the cosimplicial sheaf $ ((\sigma^0)^{-\bt}\O_X)^m:= ((\sigma^0)^m)^{-1}\O_{X_m}$ on $X^{=}_0$.
\end{definition}


\begin{lemma}\label{Dstarlemma}
If $X_{\bt}$ is a super Lie $k$-groupoid, then $NX$ is a super NQ-manifold, with $\O_{NX}$ generated in cochain degrees $\le k$.
\end{lemma}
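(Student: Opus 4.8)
The plan is to verify the two assertions of Lemma \ref{Dstarlemma} separately: first that $D^*((\sigma^0)^{-\bt}\O_X)$ is locally semi-free with generators in cochain degrees $\le k$, and second that the structural conditions of a super NQ-manifold hold, namely that $\O_{NX}^{0,=}$ is the sheaf of smooth functions on $X_0^=$ and that $Q\co \O_{NX}^{0,=}\to\O_{NX}^{1,=}$ is a $\cC^\infty$-derivation.

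First I would set up local coordinates. Working locally on $X_0^=$, the partial matching conditions on the super Lie $k$-groupoid $X_\bt$ mean that each $X_m$ is built from $X_0$ by successive pullbacks along surjective submersions of supermanifolds (the horn projections), and that $X_m\to\mathrm{hom}(\L^{m,i},X_\bt)$ is a diffeomorphism for $m>k$. By the supermanifold analogue of the Godement-style argument (or directly by the matching-object description), this exhibits $(\sigma^0)^{-\bt}\O_X$, after completing along $\ker(\O_X\to\O_{X_0})$, as a quasi-freely generated cosimplicial power series ring over $\O_{X_0^=}$ in the sense of the Remarks following Definition \ref{Dstardef}: there are super vector bundles $V^m\subset\O_{X_m}$, closed under degeneracies, with $\O_{X_m}\cong\O_{X_0^=}\hat\ten\Symm_\R V^m$ locally. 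Crucially, the diffeomorphism condition for $m>k$ forces the new (non-degenerate) generators of $V^m$ to vanish for $m>k$, so the normalised complex $NV$ is concentrated in cochain degrees $\le k$ and is finite-dimensional in each degree. Then the explicit description in the Remarks gives $D^*((\sigma^0)^{-\bt}\O_X)\cong\O_{X_0^=}\ten\Symm_\R(NV)$ as a graded-commutative sheaf, with $NV$ supported in degrees $1,\ldots,k$; this is precisely local semi-freeness with the required bound, and since there are no generators in cochain degree $0$ of unequal parity coming from $X_0$ being an honest supermanifold, the generating data has the shape $U=U^1\oplus\cdots\oplus U^k$ demanded by Definition of a super NQ-manifold.

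Next I would check the low-degree structural conditions. In cochain degree $0$ we have $N^0A=A^0=\O_{X_0^=}$, so $\O_{NX}^{0,=}$ is indeed the sheaf of smooth functions on the underlying manifold $X_0^=$, and it retains its $\cC^\infty$-ring structure. The cochain differential $Q$ on $D^*A$ is induced by $Qa=\sum(-1)^i\pd^i a$; on $N^0A=\O_{X_0^=}$ this is $\pd^0-\pd^1\co \O_{X_0^=}\to N^1A\subset\O_{X_1}$, i.e. the difference of the two pullbacks along source and target, which by the Leibniz rule for the pullback $\cC^\infty$-ring homomorphisms $\pd^{0,*},\pd^{1,*}$ is visibly a $\cC^\infty$-derivation into the $\O_{X_0^=}$-bimodule $N^1A$ (the $\cC^\infty$ chain rule for $f(a_1,\ldots,a_n)$ follows because each $\pd^{i,*}$ is a morphism of $\cC^\infty$-rings, so $\pd^0 f(a)-\pd^1 f(a)=\sum_j \tfrac{\pd f}{\pd x_j}(\pd^0 a)\,(\pd^0-\pd^1)(a_j)$ modulo higher-order terms which lie in the kernel defining $N^1$). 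This is exactly the $\cC^\infty$-derivation condition. That $Q$ is square-zero and that $D^*A$ is a commutative cochain algebra are already recorded in Definition \ref{Dstardef}.

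The main obstacle I expect is the \emph{local semi-freeness with the degree bound}: translating the combinatorial/simplicial input (surjective submersions onto horn spaces, diffeomorphisms above level $k$) into the statement that the completed cosimplicial algebra $(\sigma^0)^{-\bt}\O_X$ is quasi-freely generated with normalised generators concentrated in degrees $\le k$. This is the supermanifold analogue of the fact that a Lie $k$-groupoid has an associated $L_\infty$-algebroid concentrated in degrees $1,\ldots,k$, and it requires a careful matching-object induction: one chooses, locally, complements $V^m$ to the image of the degeneracies using that $X_m\to\mathrm{hom}(\L^{m,i},X_\bt)$ is a submersion, and one checks that for $m>k$ the diffeomorphism condition kills exactly these complements. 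I would handle this by citing the corresponding statement for the algebraic stacky CDGAs in \cite[\S\ref{poisson-cfArtinstackysn}]{poisson} and \cite{ddt1} (cf. Proposition \ref{ddt1-ctodtnew}) together with \cite{zhu}, since the argument is purely formal in the category of (super)manifolds once submersions and fibre products are available, and the $\cC^\infty$-structure plays no role beyond degree $0$.
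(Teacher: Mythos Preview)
Your approach is essentially the same as the paper's, which simply records that ``this follows directly from properties of Dold--Kan normalisation, as in \cite[Lemma \ref{poisson-Dstarlemma}]{poisson}''; you have just unpacked what that citation amounts to, correctly isolating the key step (that the horn-diffeomorphism condition for $m>k$ forces $NV^m=0$) and deferring it to the same references.

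One small imprecision: since each $X_m$ is a supermanifold, $X_0$ itself contributes odd coordinates in cochain degree $0$, so the generating module is $U=U^{0,\ne}\oplus U^1\oplus\cdots\oplus U^k$ rather than starting at $U^1$; your phrase ``there are no generators in cochain degree $0$ of unequal parity'' is not quite right, though it does not affect the argument.
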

\begin{proof} 
 This follows directly from  properties of Dold--Kan normalisation, as observed in \cite[\S \ref{poisson-cfArtinstackysn}]{poisson}; we now flesh out the argument explicitly.

 We begin by looking at the almost simplicial supermanifold $X_{\#}$ underlying $X$, meaning that we forget $\pd_0$ (terminology following \cite[\S 2]{millerCorrectionSullivanConj}). The adjunction $D^* \dashv D$  evidently descends to  an adjunction between graded superalgebras (forgetting $Q$) and  almost commutative  superalgebras. In particular, for $A:=(\sigma^0)^{-\bt}\sO_{X}$,  the graded superalgebra $(D^*A)^{\#}=\O_{NX}^{\#}$ underlying $\O_{NX}$ only depends on $X_{\#}$.
 
 Since the functor of graded  $A^0$-superalgebra derivations from $D^*A$ to graded $A^0$-supermodules $M$ is represented by $\Omega^1_{D^*A}\ten_{D^*A}A^0$, we must have 
\[
 \Hom_{A^0}(\Omega^1_{D^*A}\ten_{D^*A}A^0,M) \cong \Hom_{A^0}(\Omega^1_{A}\ten_A A^0, DM),
 \]
 for $\Hom$ taken respectively in the graded and almost cosimplicial categories, and tensor products on the right taken levelwise, so 
 \[
  (\Omega^1_{D^*A}\ten_{D^*A}A^0)^i \cong N^i (\Omega^1_{A}\ten_{A} A^0) \cong ((\sigma^0)^{-i}\Omega^1_{X_i/\mathrm{hom}(\L^{i,0},X)}\ten_{A^i} A^0)
 \]
which by hypothesis is a finitely generated projective $A^0$ module  which vanishes for $i>k$.
 
Because the structure of $\O_{NX}^{\#}$ is purely algebraic in positive degrees,  for  $K:= \ker(D^*A^{\#} \to A^0)$  we have $\Omega^1_{D^*A/A^0}\ten_{D^*A}A^0 \cong K/(K\cdot K)$. The map  $K \to  K/(K \cdot K)$ admits an $A^0$-linear section since the target is projective, and the images generate $D^*A^{\#}$ as an $A^0$-algebra. 

It only remains to show that $D^*A^{\#}$ is freely generated  over $A^0$ by that graded projective supermodule. This amounts to showing that for any commutative graded $A^0$-algebra $B$ with $B^0=A^0$, every homomorphism $D^*A^{\#} \to B^{<n}$ lifts to a homomorphism $D^*A^{\#} \to B^{\le n}$, for all $n$. By adjunction and \cite[Lemma \ref{stacks2-truncate2}]{stacks2}, this means lifting a map $f \co A^{\#} \to D(B^{<n})$ of almost cosimplicial $\C^{\infty}$-rings to a map $\tilde{f} \co A^{\#} \to D(B^{\le n})$. This in turn just entails lifting the map $f^n \co A^n \to D^n(B^{<n})$ to a map $\tilde{f}^n \co A^n \to D^n(B^{\le n})$ satisfying $\tilde{f}^n \circ \pd^i = \pd^i \circ f^{n-1}$ for all $i >0$; we automatically have $\sigma^j \circ \tilde{f}^n = f^{n-1} \circ \sigma^j$. In other words, surjectivity of the map
\[
 X_n(D^n(B^{\le n})) \to \mathrm{hom}(\L^{n,0},X)(D^n(B^{\le n}))\by_{\mathrm{hom}(\L^{n,0},X)(D^n(B^{< n}))} X_n(D^n(B^{< n}))
\]
suffices. Since the map $D^n(B^{\le n}) \to D^n(B^{< n})$ is surjective, with square-zero kernel $B^n$, the desired surjectivity follows because $X_n \to \mathrm{hom}(\L^{n,0},X)$ is a submersion.
 \end{proof}

We are now in a position to define Poisson structures and quantisations for super Lie $k$-groupoids.

\begin{definition}\label{matching}
 Given a simplicial supermanifold $X_{\bt}$ and a finite contractible simplicial set $K$, define the simplicial supermanifold $X^K_{\bt}$ by 
$ (X_{\bt}^K)_m:= \mathrm{hom}(K \by \Delta^m,X_{\bt})$.
\end{definition}

For a super Lie $k$-groupoid $X_{\bt}$, we can now form a simplicial super NQ-manifold
\[
\xymatrix@1{ NX \ar@{.>}[r]|-{\sigma_0}& \ar@<1ex>[l]^-{\pd_0} \ar@<-1ex>[l]_-{\pd_1} N(X^{\Delta^1}) \ar@{.>}@<0.75ex>[r] \ar@{.>}@<-0.75ex>[r]  & \ar[l] \ar@/^/@<0.5ex>[l] \ar@/_/@<-0.5ex>[l] 
 N(X^{\Delta^2}) &  &\ar@/^1pc/[ll] \ar@/_1pc/[ll] \ar@{}[ll]|{\cdot} \ar@{}@<1ex>[ll]|{\cdot} \ar@{}@<-1ex>[ll]|{\cdot}  N(X^{\Delta^2}) & \ldots&\ldots;}
\]
this resolves $X_{\bt}$ in the sense of \cite[Proposition \ref{poisson-replaceprop}]{poisson}, and the morphisms $\pd_i$ are all quasi-submersive homotopy local diffeomorphisms in the sense of Definitions \ref{quasisubdef} and \ref{locdiffeodef}.  Poisson structures and quantisations are functorial with respect to quasi-submersive homotopy local diffeomorphisms, so  we proceed as in Definition \ref{preDstardef}, following \cite[Definition \ref{poisson-PdefArtin}]{poisson}: 

\begin{definition}\label{PdefArtin}
Given a super  Lie $k$-groupoid $X_{\bt}$,  we define the spaces $\cP(X_{\bt},n)$ of $n$-shifted Poisson structures, $\cP(X_{\bt},\Pi n)$ of parity-reversed $n$-shifted Poisson structures  and the space $Q\cP(X_{\bt},n)$ of  $BD_{n+1}$ quantisations (the latter for $n=0,-1$) by  taking homotopy limits
\begin{align*}
 \cP(X_{\bt},n)&:= \ho\Lim_{j \in \Delta} \cP( N(X^{\Delta^j}),n),\\
\cP(X_{\bt}, \Pi n)&:= \ho\Lim_{j \in \Delta} \cP( N(X^{\Delta^j}), \Pi n),\\
Q\cP(X_{\bt},n)&:= \ho\Lim_{j \in \Delta} Q\cP(N(X^{\Delta^j}) ,n).
\end{align*}
Define their subspaces of non-degenerate elements similarly.
\end{definition}

Theorem \ref{compatthmLie} 
 then gives rise to  an equivalence between shifted symplectic and non-degenerate Poisson structures on super  Lie $k$-groupoids, by reasoning  as in \cite[Theorem \ref{poisson-Artinthm}]{poisson}. Likewise, the results of \S \ref{quantnsn} extend via these constructions from super NQ-manifolds to super Lie $k$-groupoids, as summarised at the end of this section.

\begin{remark}\label{highermoritarmk}
 Since submersions and local diffeomorphisms are the analogues in differential geometry of smooth and \'etale maps, in the terminology of \cite{stacks2,stacksintro} a  super Lie $k$-groupoid $X_{\bt}$ is an Artin $k$-hypergroupoid in supermanifolds, and the construction above has allowed us to replace $X_{\bt}$ with a Deligne--Mumford $k$-hypergroupoid $N(X^{\Delta^{\bt}})$ in super NQ-manifolds. A further crucial property of this construction is that it preserves homotopy equivalences and hypercovers, generalising Proposition \ref{biinftyFXwell0}, so the spaces of Poisson structures and of quantisations  depend only on the hypersheafification of the super  Lie $k$-groupoid on the big site of supermanifolds (with covers generated by surjective submersions); in other words, they depend only on the Morita equivalence class, so are invariants of differentiable superstacks.

 Precisely, Morita equivalences of super Lie $k$-groupoids are generated by submersive hypercovers, i.e. morphisms $X_{\bt} \to Y_{\bt}$ for which the relative matching maps
 \[
 X_m \to \mathrm{hom}(\pd \Delta^m,X_{\bt})\by_{\mathrm{hom}(\pd \Delta^m,Y_{\bt})}Y_m
 \]
are surjective submersions for all $m \ge 0$ (the $m=0$ case is $X_0 \to Y_0$, and the maps are in fact automatically diffeomorphisms for $m \ge k$). \cite[Proposition \ref{poisson-biinftyFXwell}]{poisson} ensures that these morphisms induce equivalences on spaces of Poisson structures.
 \end{remark}

\subsubsection{Derived Lie supergroupoids}

We now give a flavour of the global constructions of  \cite[\S \ref{poisson-Artindiagramsn}]{poisson} and elsewhere, defining shifted Poisson structures and quantisations on global objects which are both derived and stacky. The main motivating examples tend to be cumbersome to write down explicitly, but   as in \cite[\S \ref{DStein-stacksn}]{DStein} we can apply \cite[Theorem \ref{stacks2-bigthm}]{stacks2} to the model categories of Appendix \ref{equivapp}  to identify the resulting $\infty$-category with that of ($k$, $\pi^0$-surjective homotopy submersion)-geometric stacks in dg supermanifolds  defined inductively by analogy with \cite{hag2}, for homotopy submersions as in footnote \ref{htpysubmersiondef} (after Definition \ref{quasisubdef}).

In particular, every strongly quasi-compact algebraic derived Artin stack $X$ gives rise to such a stack, which  sends each dg manifold $U$ to  $X(\C^{\infty}(U))$. Examples of such include all components of the derived moduli stack of perfect complexes $\Perf$. The canonical $2$-shifted symplectic structure on $\Perf$ is the source of very many constructions, with transgression as in Example \ref{transgressex} giving a  $(2-d)$-shifted symplectic structure on the derived moduli stack $\Perf_X$ of perfect complexes on an NQ manifold $X$ with a suitable $d$-dimensional cohomology theory. For instance, if we take $X$ to be the NQ manifold associated to the tangent Lie algebroid of a manifold $M$, then $\Perf_X$ is the derived stack of perfect complexes on $M$ with flat hyperconnection, and carries a $(2-d)$-shifted symplectic structure when $M$ is compact and orientable of dimension $d$.

\begin{definition}\label{derivedsuperLiegpddef}
 Define a dg super Lie $k$-groupoid to be a simplicial dg supermanifold $X_{\bt}$ (so each $X_m$ is a dg supermanifold in the sense of Definition \ref{derivedsupermfd}) such that
\begin{enumerate}
\item  the partial matching maps
\[
 \pi^0X_m \to \mathrm{hom}(\L^{m,i},\pi^0X_{\bt})
\]
are surjective submersions of super $\C^{\infty}$-spaces for all  $m\ge 1$, $ 0 \le i \le m$, and  diffeomorphisms  for $m >k$; 

\item  the sheaf $\sH_*\O_X$ on $X_{\bt}^{0,=}$ is $\sH_0\sO_X$-Cartesian in the sense that for each of the face maps $\pd_i \co X_m \to X_{m-1}$, we have
\[
 \sH_*\O_{X_m} \cong \sH_0\O_{X_m}\ten_{(\pd_i^{-1}\sH_0\O_{X_{m-1}})}(\pd_i^{-1}\sH_*\O_{X_{m-1}});
\]

\item  the partial matching maps
\[
 X_m \to \mathrm{hom}(\L^{m,i},X_{\bt})
\]
are quasi-submersions (Definition \ref{quasisubdef}) of dg supermanifolds for all $m\ge 1$, $ 0 \le i \le m$, and local  diffeomorphisms for $m \gg 0$. 

\end{enumerate}
\end{definition}

 A restricted class of objects satisfying the conditions of Definition \ref{derivedsuperLiegpddef} is to take a simplicial dg supermanifold $X_{\bt}$ (so each $X_m$ is a dg supermanifold in the sense of Definition \ref{derivedsupermfd}) such that $X_{\bt}^0$ is a super Lie $k$-groupoid and the  sheaf $\O_X$ on $X_{\bt}^{0,=}$ is $\sO_X^0$-Cartesian. Under Lemma \ref{Dstarlemma2}, these give rise to the class of dg NQ manifolds  of Remark \ref{Rstex}. Examples include products of Lie $k$-groupoids with dg supermanfolds, but there are many more involved constructions.

\begin{remarks} 
If we take $\cS$ to be the class of $\pi^0$-surjective homotopy submersions, a  homotopy $(k,\cS)$-hypergroupoid in dg (super)manifolds  in the sense of \cite{stacks2,stacksintro} is a simplicial dg (super)manifold satisfying the first two conditions of  Definition \ref{derivedsuperLiegpddef}.
By \cite[Theorem \ref{stacks2-bigthm} and Examples \ref{stacks2-affhgpddef}]{stacks2}, the $\infty$-category given by simplicial localisation of dg super Lie $k$-groupoids  at homotopy $\cS$-hypercovers is equivalent to that of ($k,\cS$)-geometric stacks in dg (super)manifolds  defined inductively by analogy with \cite{hag2}.
The derived Lie $n$-groupoids of \cite{nuitenThesis} are similar, but  requiring finiteness conditions to hold only up to homotopy, similarly to Remark \ref{htpyfdrmkd1}.
 
The sole purpose of the third condition is to ensure that the formal completion of $X$ along $X_0$ is a dg super NQ-manifold in the sense of Definition \ref{derivedlie}, a definition which is itself unnecessarily restrictive. On the other hand, the  (strict) $(k,\cS)$-hypergroupoids in dg supermanifolds  of \cite{stacks2,stacksintro}  satisfy all of Definition \ref{derivedsuperLiegpddef} except the $m \gg 0$ part of the final condition, because all of their matching (not just partial matching) maps are quasi-submersions. Every homotopy $(k,\cS)$-hypergroupoid $X$ is levelwise quasi-isomorphic to a  $(k,\cS)$-hypergroupoid $\hat{X}$ by Reedy fibrant replacement, and then $\C^{\infty}(N\hat{X})$ below satisfies $\H^*_Q(\Omega^1_{C^{\infty}(N\hat{X})}\ten_{ C^{\infty}(N\hat{X})}\C^{\infty}(\hat{X}_0))=0$, so it is cofibrant in the model structure of Remark \ref{locmodelNQrmk}.

 Precisely, a morphism $f \co X_{\bt} \to Y_{\bt}$ of  dg super Lie $k$-groupoids is a  homotopy $\cS$-hypercover if $\pi^0X_{\bt} \to \pi^0Y_{\bt}$ is a submersive hypercover (Remark \ref{highermoritarmk}) and the maps $\sH_0\O_{X_m}\ten_{(f_m^{-1}\sH_0\O_{Y_{m}})}(f_m^{-1}\sH_*\O_{Y_{m}}) \to \sH_*\O_{Y_m}$ are isomorphisms for all $m$. See \cite[Lemma 6.68]{2021lect} for alternative characterisations.
 \end{remarks}

\begin{definition}
 Given a simplicial dg supermanifold $X_{\bt}$, define the normalisation $NX$ to be manifold $X^{0,=}_0$ equipped with the super chain cochain algebra $D^*( (\sigma^0)^{-\bt}\O_X)$, where $(\sigma^0)^{-\bt}\O_X $ is the cosimplicial sheaf $ ((\sigma^0)^{-\bt}\O_X)^m:= ((\sigma^0)^m)^{-1}\O_{X_m}$ on $X^{=}_0$.
\end{definition}

Adapting the argument for super Lie $k$-groupoids (Lemma \ref{Dstarlemma}) gives:
\begin{lemma}\label{Dstarlemma2}
If $X_{\bt}$ is a dg super Lie $k$-groupoid, then $NX$ is a super dg NQ-manifold, with $\O_{NX}$ generated in cochain degrees $\le k$.
\end{lemma}
\begin{proof}
 The proof of Lemma \ref{Dstarlemma} ensures that the first condition implies that as a bigraded superalgebra $\sO_{NX}$ is locally  freely generated over $\sO_{X,0}^0$ by  finitely many generators (the quasi-submersion conditions ensure finitely many generators in each cochain degree, and the $m \gg 0$ condition ensures finitely many such degrees).
 
 The proof of Lemma \ref{Dstarlemma} also ensures that the second condition implies that as a graded superalgebra, $\sH_0\sO_{NX}$ is locally freely and finitely generated over $\sH_0\sO_X^0$, with generators in cochain degrees up to $k$. The third condition then ensures $\sH_*\sO_{NX}^{\#}\cong (\sH_*\sO_X^0)\ten_{\sH_0\sO_X^0}\sH_0\sO_{NX}$, completing the Artin conditions.
\end{proof}

Definitions \ref{matching} and \ref{PdefArtin} then adapt verbatim to give expressions for shifted Poisson structures and quantisations on dg super Lie $k$-groupoids. As in \cite[Proposition \ref{poisson-biinftyFXwell}]{poisson}, these spaces of depend only on the hypersheafification of the dg super  Lie $k$-groupoid with respect to $\pi^0$-surjective  homotopy submersions (a higher, derived analogue of Morita equivalence).
Theorem \ref{compatthmdgLie}
gives rise to an equivalence between shifted symplectic and non-degenerate Poisson structures on  dg super Lie $k$-groupoids, reasoning as in \cite[Theorem \ref{poisson-Artinthm}]{poisson}. The deformation quantisations of $X$  are significantly stronger than just deformations of derived global sections of functions on $X$; by \cite[Proposition \ref{DQnonneg-Perprop2}]{DQnonneg}, they give rise to deformations of the dg category of perfect complexes on $X$.

The results of \S \ref{quantnsn} all extend via these constructions from dg super NQ-manifolds to give quantisations for shifted Poisson structures on dg super Lie $k$-groupoids. Specifically, quantisation of $n$-shifted Poisson structures for $n>0$ follows immediately from formality of the $E_{n+1}$ operad, and quantisation of $n$-shifted co-isotropic structures for $n>1$ follows from similar equivalences for the Swiss cheese operads as in  \cite{MelaniSafronovII}. For the non-formal cases, 
we have quantisation of $0$-shifted  Poisson structures \cite[Corollary \ref{DQpoisson-Artinquantcor}]{DQpoisson}, of non-degenerate $(-1)$-shifted symplectic structures given a spin structure (a  line bundle with a right $\cD$-module structure on its square) \cite[Proposition \ref{DQvanish-quantpropsd}]{DQvanish}, of $1$-shifted co-isotropic structures \cite[Corollary \ref{DQpoisson-coisocor2}]{DQpoisson} and of $0$-shifted Lagrangians with a spin structure on the Lagrangian \cite[Theorem \ref{DQLag-quantpropsd}]{DQLag}.

\appendix

\section{Notions of equivalence}\label{equivapp}

 Homotopy theory and $\infty$-categories arise simply by specifying a notion of equivalence weaker than isomorphism. Although dg and NQ-manifolds look superficially similar, the respective notions of equivalence are very different. We saw an early instance of this in the behaviour of their de Rham complexes, since dg manifolds satisfy $\DR(X) \simeq \DR(\pi^0X)$, while NQ-manifolds satisfy $\DR(X) \simeq \DR(X_0)$, which tells us that $\DR$ preserves quasi-isomorphisms for dg manifolds, but not for NQ-manifolds. 
 
 Quasi-isomorphism invariance of functions for objects such as dg manifolds  is the  principle underpinning derived geometry. The motivating example is given by the derived vanishing locus $\oR s^{-1}(0)$ of Example \ref{DCritex}, which must behave equivalently to the classical vanishing locus  $s^{-1}(\{0\})$ whenever $s$ intersects the zero section transversely.

 NQ manifolds are generalised quotients of manifolds, whereas dg manifolds are generalised subobjects.
For NQ manifolds, quasi-isomorphism of functions is too crude to give reasonable behaviour, as Examples \ref{NQbadex} below demonstrate. In order to ensure a correspondence of derived categories of vector bundles (Examples \ref{vbundleex}), the the correct notion of equivalence for them is based on tangent quasi-isomorphism (Remarks \ref{weakerequivrmks}),  a common phenomenon in Koszul duality.

\begin{examples}\label{NQbadex}
  If $X$ is the NQ manifold associated to the tangent Lie algebroid of the circle, then  we have $\C^{\infty}(X) \cong \Gamma(S^1, \Omega^{\bt}_{S^1})$ as in Examples \ref{exBg}. The element $d\theta \in \Omega^1_{S^1}$ gives us a morphism $X \to B\g_a$ to the NQ manifold associated to the Lie algebra $\g_a=\R$, corresponding to the morphism $\R \oplus \R d\theta \to \C^{\infty}(X)$ of dg $\C^{\infty}$-rings. That morphism is a quasi-isomorphism between functions of NQ manifolds which we do not wish to regard as equivalent. In particular, their tangent complexes are far from quasi-isomorphic, respectively corresponding at each point to the acyclic complex $(\id \co \R \to \R)$  and to $\R[-1]$. Via Examples \ref{vbundleex}, vector bundles on $X$ correspond to bundles on $S^1$ with connection, or equivalently to $\Z$-representations, whereas vector bundles on $B\g_a$ are $\g_a$-representations; the pullback functor corresponds to $\exp \co \mathfrak{gl}_n(\R) \to \GL_n(\R)$, so is not an equivalence.
 
 The same procedure generalises by replacing $S^1$ with any connected manifold $M$ whose rational homotopy groups $\bigoplus_n \pi_n(M\ten \Q)$ are finite-dimensional. The Sullivan minimal model $\cM$ \cite{Sullivan} of $M$ can then be interpreted as $\C^{\infty}(Y)$ for an NQ manifold $Y$ with $Y_0=\ast$, and $\C^{\infty}(Y) \to \C^{\infty}(X)$ is a quasi-isomorphism despite $Y$ having tangent homology groups $\pi_n(M\ten \Q)\ten_{\Q}\R$ while those of $X$ are $0$.
 
 As another example, consider the the NQ manifold $B\mathfrak{sl}_2$ associated to the Lie algebra $\mathfrak{sl}_2$. This has $\H^*(B\mathfrak{sl}_2)\cong \H^*(\mathfrak{sl}_2,\R) \cong \R \oplus \R [-3]$, so choosing a generator in $ \L^3(\mathfrak{sl}_2)^*$ gives us a morphism $B\mathfrak{sl}_2 \to B^3\g_a$ to the NQ manifold associated to the DGLA $\g_a[2]=\R[2]$, and the resulting map $\H^*\C^{\infty}(B^3\g_a) \to \H^*\C^{\infty}(B\mathfrak{sl}_2)$ is an isomorphism. However, the respective tangent complexes at each point are $\mathfrak{sl}_2[-1]$ and $\R[-3]$, and vector bundles on $B\mathfrak{sl}_2$ and $B^3\g_a$ are representations of the DGLAs $\mathfrak{sl}_2$ and $\R[2]$, respectively.
 \end{examples}

 \subsection{Quasi-isomorphisms of dg supermanifolds}\label{dgmfdequivsn}
 
 Since we are assuming that manifolds $M$ satisfy Whitney's embedding theorem, they behave similarly in some respect to smooth affine algebraic varieties. The global sections functor $\Gamma(M,-)$ is exact on quasi-coherent sheaves of $\sO_M$-modules, by the existence of partitions of unity, so gives an equivalence of categories between quasi-coherent sheaves of $\sO_M$-modules and $\C^{\infty}(M)$-modules. Since $\C^{\infty}$ K\"ahler differentials are defined by adjunction, that also gives a canonical isomorphism $\Omega^1_{\C^{\infty}(X)} \cong \Gamma(X^{0,=} _0, \Omega^1_X)$ for any dg NQ supermanifold $X$, and similarly for tensor powers and $\Hom$s.
 
 \begin{lemma}\label{sheaflemma}
  Given a dg manifold $X$ and an $\O_X$-module $\sF$ in chain complexes of sheaves which is quasi-coherent in the sense that $\sF(V) \cong \sF(U)\ten_{\sO_X(U)}\sO_X(V)$ is an isomorphism for all open immersions $U \to V$ in $X^0$,
  the natural map 
  \[
   \Gamma(X^0, \sF) \to \Gamma(\pi^0X, i^{-1}\sF)
  \]
is a quasi-isomorphism, where $i \co \pi^0X \to X^0$ denotes the canonical closed immersion.

\end{lemma}
\begin{proof}
The quasi-coherence condition for $\sF$  is equivalent to quasi-coherence as a complex of  $\sO_{X,0}$-modules. Since $\Gamma(X^0,-)$ is exact for such modules (and similarly for $\pi^0X$), we have
\begin{align*}
 \H_n \Gamma(X^0, \sF) \cong   \Gamma(X^0, \sH_n\sF) \cong   \Gamma(X^0, i_*i^{-1}\sH_n\sF)\\
 \cong \Gamma(\pi^0X, \sH_n(i^{-1}\sF)) \cong \H_n\Gamma(\pi^0X, i^{-1}\sF),
\end{align*}
where the middle equivalences follow because  $\sH_n\sF$ is an $\sH_0\sO_X$-module, so is supported on $\pi^0X$.
\end{proof}

\begin{proposition}\label{QIMtanprop}
 For a morphism $f \co X \to Y$ of dg supermanifolds, the following conditions are equivalent:
 \begin{enumerate}
  \item $f$ induces homology isomorphisms $\H_*\C^{\infty}(Y) \to \H_*\C^{\infty}(X)$ of supervector spaces;
 
 \item $f$  induces an isomorphism $\pi^0X^= \to \pi^0Y^=$ of topological spaces and homology isomorphisms  $\H_*\sO_{Y,f(x)} \to \H_*\sO_{X,x}$ of stalks for all $x \in \pi^0X^=$;
 
  \item $f$ induces an isomorphism 
  $\H_0\C^{\infty}(Y^=) \to  \H_0\C^{\infty}(X^=)$ 
  of $\C^{\infty}$-rings and homology isomorphisms $\H_*(\Omega^{1}_{\C^{\infty}(Y)}\ten_{\C^{\infty}(Y)}\C^{\infty}(X^0)) \to \H_*(\Omega^{1}_{\C^{\infty}(X)}\ten_{\C^{\infty}(X)}\C^{\infty}(X^0))$.
   
 \item $f$ induces an isomorphism 
  $\H_0\C^{\infty}(Y^=) \to  \H_0\C^{\infty}(X^=)$ 
  of $\C^{\infty}$-rings and homology isomorphisms $\H_*(\Omega^{1}_{\C^{\infty}(Y)}\ten_{\C^{\infty}(Y)}\H_0\C^{\infty}(Y^=)) \to \H_*(\Omega^{1}_{\C^{\infty}(X)}\ten_{\C^{\infty}(X)}\H_0\C^{\infty}(X^=))$, where $ \H_0\C^{\infty}(X^=)$ is the quotient of $\H_0\C^{\infty}(X)$ by the ideal $(\H_0\C^{\infty}(X)^{\ne})$.
  
  \item $f$ induces an isomorphism $\pi^0X^= \to \pi^0Y^=$ of $\C^{\infty}$-spaces and homology isomorphisms $\H_*(\Omega^1_{Y,f(x)}\ten_{\sO_{Y,fx}}\sO_{\pi^0X^=,x}) \to \H_*(\Omega^1_{Y,f(x)}\ten_{\sO_{Y,fx}}\sO_{\pi^0X^=,x})$  of stalks for all $x \in \pi^0X^=$.
  \end{enumerate}
\end{proposition}
\begin{proof}
 The equivalences of (1) with (2) and of (4) with (5) follow immediately from Lemma \ref{sheaflemma} and our assumption that manifolds satisfy Whitney's embedding theorem, so quasi-coherent sheaves of $\sO_{X^0}$-modules are generated by global sections. It thus suffices to show that (1), (3) and (4) are equivalent.

There is a projective model structure on the category $dg_+S\C^{\infty}$ of super dg $\C^{\infty}$-rings $A_{\ge 0}$ concentrated in non-negative chain degrees, in which weak equivalences are quasi-isomorphisms and fibrations are surjective in strictly positive degrees --- see \cite[Theorem 6.10 and Remark 6.13]{CarchediRoytenbergHomological}, or just apply Kan's transfer theorem \cite[Theorem 11.3.2]{hirschhorn} to the forgetful functor to super chain complexes. As in \cite[\S 2.2.2]{nuitenThesis} (incorporating an additional $\Z/2$-grading), following \cite{Q} the functor $A \mapsto \Omega^1_A\ten_AB$ is then a left Quillen functor from the overcategory $dg_+S\C^{\infty}/B$ to the category $dg_+S\Mod(B)$ of $B$-modules in non-negatively graded supercomplexes, with the projective model structure. Its left-derived functor sends $A$ to $(\oL \Omega^1_A)\ten^{\oL}_AB$, for $\oL\Omega^1_A$ the $\C^{\infty}$ cotangent complex, and necessarily preserves quasi-isomorphisms.

Thus (1) implies that  
\begin{align*}
\oL\Omega^1_{\C^{\infty}(Y)}\ten^{\oL}_{\C^{\infty}(Y)}\C^{\infty}(X^0) &\simeq \oL\Omega^1_{\C^{\infty}(X)}\ten^{\oL}_{\C^{\infty}(X)}\H_0\C^{\infty}(X^0),\quad \text{ and}\\
\oL\Omega^1_{\C^{\infty}(Y)}\ten^{\oL}_{\C^{\infty}(Y)}\H_0\C^{\infty}(X^=) &\simeq \oL\Omega^1_{\C^{\infty}(X)}\ten^{\oL}_{\C^{\infty}(X)}\H_0\C^{\infty}(X^=).
 \end{align*}
By \cite[Example 2.2.17]{nuitenThesis},  the cotangent module $\Omega^1_{\C^{\infty}(X^{0,=})}$ is a model for $\oL \Omega^1_{\C^{\infty}(X^{0,=})}$, essentially because $\C^{\infty}(\R^n)$ is cofibrant and $X^{0,=}$ is locally isomorphic to $\R^n$. Now, the morphism $\C^{\infty}(X^{0,=}) \to  \C^{\infty}(X)$ is a cofibration, being freely generated by projective modules, so we have $\oL\Omega^1_{\C^{\infty}(X)/\C^{\infty}(X^{0,=})}\simeq \Omega^1_{\C^{\infty}(X)/\C^{\infty}(X^{0,=})}$, and hence $\oL\Omega^1_{\C^{\infty}(X)}\simeq \Omega^1_{\C^{\infty}(X)}$ by the fundamental exact sequence of cotangent complexes, so (3) and (4) follow. Also note that (3) implies (4), by applying $\ten^{\oL}_{\C^{\infty}(X^0)}\H_0\C^{\infty}(X^=)$

The implication of (1) from (4)  now follows because the morphism $g \co \C^{\infty}(X) \to \H_0\C^{\infty}(X^=)$ and its counterpart for $Y$ are homotopy nilpotent extensions, so lifts along $g$ in the homotopy category $\Ho(dg_+S\C^{\infty})$ are governed by the cotangent complex, as for instance in \cite[Property 3.29, \S 6.4.1 and Lemma 6.30]{2021lect}, including the subsequent comment. 

The argument can loosely be summarised as saying that, for  $I_A$  the ideal $(\H_0A^{\ne}) \subset \H_0A$,  we have a spectral sequence of groups and sets, similar to the sequences  in Appendix \ref{towersn}, with initial page consisting of terms  $\Hom_{\C^{\infty}}(\H_0A/I_A,\H_0B/I_B)$ and $\EExt^i_{\H_0(A)/I_A }(\oL\Omega^1_A\ten^{\oL}_A\H_0(A)/I_A, \H_nB\ten_{\H_0B}(I_B^r/I_B^{r+1}))^=$, and final page the homotopy groups $\pi_n\oR\map_{dg_+S\C^{\infty}}(A,B)$ of the simplicial mapping space. Letting  $B$ be arbitrary, we deduce that  $\oR\map_{dg_+S\C^{\infty}}(\C^{\infty}(Y),-) \to \oR\map_{dg_+S\C^{\infty}}(\C^{\infty}(X),-)$  is a weak equivalence, so $\C^{\infty}(Y) \to \C^{\infty}(X)$ induces an isomorphism in $\Ho(dg_+S\C^{\infty})$.
%
%
%
%
 \end{proof}


 \subsection{Equivalences of dg super NQ-manifolds} 
 
 \subsubsection{Levelwise $\delta$-quasi-isomorphisms and homotopy local diffeomorphisms}

\begin{proposition}\label{QIMtanprop2} 
 For a morphism $f \co X \to Y$ of dg super NQ-manifolds, the following conditions are equivalent:
 \begin{enumerate}
  \item $f$ induces homology isomorphisms $\H_*\C^{\infty}(Y)^i \to \H_*\C^{\infty}(X)^i$ of supervector spaces for all $i$;
 
 \item $f$  induces an isomorphism $\pi^0X_0^= \to \pi^0Y_0^=$ of topological spaces and homology isomorphisms  $\H_*\sO_{Y,f(x)}^i \to \H_*\sO_{X,x}^i$ of stalks for all $x \in \pi^0X^=$ and all $i$;
 
 \item $f$ induces an isomorphism 
  $\H_0\C^{\infty}(Y_0^=) \to  \H_0\C^{\infty}(X_0^=)$ 
  of $\C^{\infty}$-rings and homology isomorphisms $\H_*(\Omega^{1}_{\C^{\infty}(Y)}\ten_{\C^{\infty}(Y)}\H_0\C^{\infty}(Y_0^=))^i \to \H_*(\Omega^{1}_{\C^{\infty}(X)}\ten_{\C^{\infty}(X)}\H_0\C^{\infty}(X_0^=))^i$ for all $i$, where $ \H_0\C^{\infty}(X_0^=)$ is the quotient of $\H_0\C^{\infty}(X_0)$ by the ideal $(\H_0\C^{\infty}(X_0)^{\ne})$.
  
  \item $f$ induces an isomorphism $\pi^0X_0^= \to \pi^0Y_0^=$ of $\C^{\infty}$-spaces and homology isomorphisms $\H_*(\Omega^1_{Y,f(x)}\ten_{\sO_{Y,fx}}\sO_{\pi^0X_0^=,x})^i \to \H_*(\Omega^1_{Y,f(x)}\ten_{\sO_{Y,fx}}\sO_{\pi^0X_0^=,x})^i$  of stalks for all $x \in \pi^0X^=$ and all $i$.
  \end{enumerate}
\end{proposition}
\begin{proof}
 The equivalences of (1) with (2) and of (3) with (4) follow immediately from Lemma \ref{sheaflemma}, exactly as in Proposition \ref{QIMtanprop}.
 
 
 There is a projective model structure on  the category $DG^+dg_+S\C^{\infty}$ of super bidifferential bigraded $\C^{\infty}$-rings $A_{\ge 0}^{\ge 0}$ concentrated in non-negative bidegrees, obtained by applying  Kan's transfer theorem \cite[Theorem 11.3.2]{hirschhorn} to the forgetful functor  to super graded  chain complexes (forgetting $Q$). In this model structure, fibrations are surjections in positive chain degrees, and weak equivalences are levelwise $\delta$-quasi-isomorphisms.
 
 However, cofibration is a very strong condition in the model structure, incorporating the additional  condition $\H^*_Q(\Omega^1_A\ten_AB^0)\cong \H^*_Q(\Omega^1_B\ten_BB^0)$.\footnote{Thus the only dg super NQ-manifold $X$ for which $\C^{\infty}(X)$ is cofibrant is the point $\R^0$. However, cofibrant replacements of $\C^{\infty}(X)$ will always exist in the larger class described in Remark \ref{htpyfdrmkd2}.}
 Since none of our statements involves $Q$, we discard it for the purposes of this proof  and work with  the category $G^+dg_+S\C^{\infty}$ of super chain-differential bigraded $\C^{\infty}$-rings $A_{\ge 0}^{\ge 0}$ concentrated in non-negative bidegrees. This again has a projective  model structure by transfer, with the same  fibrations and weak equivalences.
 It is easy to see that the morphisms $\sC^{\infty}(X)^0 \to \sC^{\infty}(X)^{\#}$ (which do not exist in  $DG^+dg_+S\C^{\infty}$) are cofibrations in this model structure, so the relative K\"ahler differentials $\Omega^1_{\sC^{\infty}(X)^{\#}/\sC^{\infty}(X)^0}$ are a model for the relative cotangent complex. It is also immediate that any cofibrant object of $dg_+S\C^{\infty}$ is cofibrant in $G^+dg_+S\C^{\infty}$, so cotangent complexes in $G^+dg_+S\C^{\infty}$ agree with those of $dg_+S\C^{\infty}$ for objects concentrated cochain degree $0$; thus $\Omega^1_{\sC^{\infty}(X)^0}$ models the cotangent complex of $\sC^{\infty}(X)^0$, by the proof of  Proposition \ref{QIMtanprop}. The fundamental exact sequence of cotangent complexes then implies that $\Omega^1_{\C^{\infty}(X)}$ models the cotangent complex of  $\C^{\infty}(X)$ in $G^+dg_+S\C^{\infty}$.\footnote{Incidentally, the forgetful functor from  $DG^+dg_+S\C^{\infty}$ to $G^+dg_+S\C^{\infty}$ is left Quillen, so preserves cotangent complexes, implying that this also models the cotangent complex in the former.}

 The rest of the proof now follows exactly as in  Proposition \ref{QIMtanprop} once we observe that  the kernel of $A \to A^0$ is a complete ideal   for  all $A \in G^+dg_+S\C^{\infty}$.
 \end{proof}

The following is an immediate consequence.
\begin{corollary}\label{locdiffeocor}
 If  a morphism $f \co X \to Y$ of dg super NQ-manifolds is a levelwise $\delta$-quasi-isomorphism in the sense that it induces quasi-isomorphisms $\C^{\infty}(Y)^i \to \C^{\infty}(X)^i$ for all $i$, then it is a homotopy local diffeomorphism in the sense of Definition \ref{locdiffeodef}. 
\end{corollary}

\begin{remarks}[Weaker equivalences]\label{weakerequivrmks}
Following the philosophy laid out at the start of this appendix, it is essential that derived geometrical constructions send levelwise $\delta$-quasi-isomorphisms to weak equivalences, which is why so many constructions involve the sum-product total complex $\hatTot$ of Definition \ref{hatTotdef}. The natural notion of equivalence for dg super NQ-manifolds is weaker still: quasi-isomorphism on $X_0$ combined with homotopy local diffeomorphism, which we saw in Proposition \ref{hfetlemma} preserves all of our constructions of interest.\footnote{The author has not checked whether  taking the right Bousfield localisation \cite[Theorem 5.1.1]{hirschhorn} of  $DG^+dg_+S\C^{\infty}$ with respect to some generalisation of  dg super NQ-manifolds  will give a model structure for which weak equivalences of cofibrant objects are precisely the homotopy local diffeomorphisms. Since the theory of $\infty$-categories is modelled by relative categories (i.e. categories equipped with a class of weak equivalences), we  have a well-defined $\infty$-category of  dg super NQ-manifolds (or rather their generalisation as in Remarks \ref{htpyfdrmkd2}) up to homotopy local diffeomorphism regardless of whether such a model structure exists.} 

There are very many natural constructions which send homotopy local diffeomorphisms $f \co X \to Y$ with $f_0\co X_0 \to Y_0$ a quasi-isomorphism   to weak equivalences. For any functor  $F$ on $dg_+S\C^{\infty}$, the construction $D_*F$ of \cite[\S 4.1.3]{smallet2} gives an extension to a functor on $DG^+dg_+S\C^{\infty}$.  When $F$ is a simplicial set-valued functor with   a cotangent complex $\bL^F$,  the standard obstruction  method  of  \cite[\S 6.4.1]{2021lect}, applied to the filtration by powers of $(A^{>0})$, gives us a  spectral sequence of groups and sets with initial page $F(A^0)$ and $\EExt_{A^0}(\bL^F_{A^0}, \Tot^{\sqcap}((A^{>0})^n/(A^{>0})^{n+1}))$, converging  to $D_*F(A)$. For $f$ as above, the proof of Proposition \ref{hfetlemma} gives quasi-isomorphisms $\Tot^{\sqcap}(((\C^{\infty}(Y)^{>0})^n/((\C^{\infty}(Y)^{>0})^{n+1}) \to \Tot^{\sqcap}(((\C^{\infty}(X)^{>0})^n/((\C^{\infty}(X)^{>0})^{n+1})$, so $D_*F(\C^{\infty}(Y)) \to D_*F(\C^{\infty}(X))$ is a weak equivalence. 

In particular, this applies when $F$ is the functor associated to any dg super Lie $k$-groupoid. Every algebraic derived Artin $k$-stack gives rise to such, regarding it as  a functor on dg $\C^{\infty}$-rings by forgetting the $\C^{\infty}$ structure. 

\end{remarks}

\begin{examples}\label{vbundleex}
For application to Remarks \ref{weakerequivrmks}, examples of derived Artin stacks of particular interest are the moduli stacks $B\GL_n$ and $\Perf$ of rank $n$ vector bundles and perfect complexes, respectively. As in  \cite[Examples \ref{smallet2-BGLnex} and \ref{smallet2-perex}]{smallet2}, objects of  $D_*B\GL_n(A)$ and  $D_*\Perf(A)$ correspond to   $A$-modules $M^{\ge 0}_{\bt}$ in chain cochain complexes  for which $M^0\ten^{\oL}_{A^0}\H_0A$ is quasi-isomorphic to a projective rank $n$ $\H_0A$-module (resp. perfect $\H_0A$-complex) and the maps $M^0\ten_{A^0}^{\oL}A^i \to M^i$ are all quasi-isomorphisms. When $A = \C^{\infty}(X)$ for an NQ manifold $X$,   the former correspond simply to vector bundles on  $X_0$ with flat $Q$-connection.  
\end{examples}

\subsubsection{Filtered Q-supermanifolds}\label{filtsn}

A homotopically equivalent  alternative to  dg NQ supermanifolds is given by filtered Q-supermanifolds. We now summarise the main features of that approach, which tends not to be followed in most of the literature.

\smallskip
\paragraph{\it Filtrations $\sim$ bigradings.}
As for instance in \cite[Lemma 1.5]{DQLag}, the $\infty$-category of chain cochain complexes localised at levelwise chain quasi-isomorphisms is equivalent to the $\infty$-category of complete exhaustively filtered chain complexes localised at filtered quasi-isomorphisms. The functor sends a double complex $V^{\bt}_{\bt}$ to the sum-product total cochain complex $\hatTot V$ of Definition \ref{hatTotdef}, equipped with the filtration $\Fil_{\Tot}^p\hatTot V:= \Tot^{\sqcap}(V^{\ge p})$. That functor is monoidal under $\ten$, as is its homotopy inverse functor $\g\fr$, given by setting the chain complex  $\g\fr^i(U,F)_{[i]}$ to be the cone of $F^{i+1}U \to F^iU$ reindexed as a chain complex, with the obvious composite $\g\fr^i(U,F) \to (F^{i+1}U)_{[-i-1]} \to   \g\fr^{i+1}(U,F)$ defining the cochain differential $Q$. 

Note that there are natural quasi-isomorphisms $\g\fr^i(U,F)_{[i]} \onto \gr_F^iU$ for all $i$, and 
isomorphisms $V^i_{\bt} \cong \gr_{\Fil_{\Tot}}^i(\hatTot V)[i]$ for all chain cochain complexes $V$.

\smallskip
\paragraph{\it Filtered $Q$-supermanifolds $\sim$ homotopy dg super NQ-manifolds.}
That pair of functors then gives rise to an $\infty$-equivalence on the associated categories of $\C^{\infty}$-algebras. One side of the equivalence is the localisation of $DG^+dg_+S\C^{\infty}$ at levelwise $\delta$-quasi-isomorphisms. The other side is the category of completely filtered super  dg $\C^{\infty}$-rings $(A^{\bt},F)$ with $F^0A=A$ and  $ \H^j(\gr_{F}^iA)=0$ for $j>i$, localised at filtered quasi-isomorphisms,\footnote{The forgetful functor $(A,F) \leadsto \{F^nA\}_n$ to graded chain complexes should yield a projective model structure via Kan's transfer theorem, but this argument has no need for such.} or equivalently  at maps which induce quasi-isomorphisms on $\gr_{F}^pA$ for all $A$. Here, the filtration is required to satisfy $(F^pA)(F^qA) \subset F^{p+q}A$.

Under this correspondence, dg super NQ-manifolds $X$ give rise to  complete filtered super  dg $\C^{\infty}$-rings $(A, F)$ with $F^0A=A$, $A^i \subset F^iA$, such that $\gr_{F}^0A^0$ is isomorphic to $\C^{\infty}(X_0^0)$ for some supermanifold $X_0^0$, with  the algebra $(\gr_{F}^{\#}A)_{\#}$ freely generated over $\gr_{F}^0A_{\#}$ by a finite projective bigraded supermodule $M$. 

The analogue for $(A,F)$ of the underived truncation $\pi^0X$ is then given by $(\bigoplus_{p \ge 0} \H^p(\gr_{F}^pA)[-p], Q_1)$, where $Q_1$ is the differential on the first page of the spectral sequence associated to the filtration $F$. Similarly, the analogue of the $\sO_{\pi^0X}$-module  $\sH_n\sO_X$  is  $(\bigoplus_{p \ge 0} \H^{p-n}(\gr_{F}^pA)[-p], Q_1)$. We thus impose  Artin conditions in these terms, requiring  that the graded-commutative algebra $\bigoplus_p \H^p(\gr_{F}^{p}A)[-p]$ be freely generated over $\H^0(\gr_{F}^0A)$, with multiplication $    \H^p(\gr_{F}^{p}A)\ten_{\H^0(\gr_{F}^0A)}\H^i(\gr_{F}^0A) \to \H^{p+i}( \gr_{F}^{p}A)$ being an isomorphism for all $p,i$.

The generalised dg super NQ-manifolds of Remarks \ref{htpyfdrmkd2} precisely correspond under this $\infty$-equivalence to filtered super  dg $\C^{\infty}$-rings generated similarly, but with $M$ allowed to be infinitely generated and $\gr_{F}^0A$ just satisfying the conditions of  Remark \ref{htpyfdrmkd1}, and  the Artin conditions incorporating finite generation of $\bigoplus_p\H^p(\gr_{F}^{p}A)[-p]$  over $\H^0(\gr_{F}^0A)$.

 \smallskip
\paragraph{\it Cotangent modules and local diffeomorphisms.}
The relevant cotangent module of $(A,F)$,  corresponding to the construction $\C^{\infty}(X) \leadsto  (\hatTot \Omega^1_{\C^{\infty}(X)}, \Fil_{\Tot})$ is then given by the completion $\hat{\Omega}^1_A$ of $\Omega^1_A$ with respect to the induced filtration $F$. Tensor products also have to be completed with respect to the filtration, and we write $M\hten_A N$ for the completion of $M\ten_A N$

While levelwise $\delta$-quasi-isomorphism corresponds to filtered quasi-isomorphism under the equivalence above, homotopy local diffeomorphism in the sense of Definition \ref{locdiffeodef} corresponds to  morphisms $(A,F) \to (B,G)$ which induce a
quasi-isomorphism 
$(\hat{\Omega}^1_A\hten_A\gr_G^0B)/F^N \to (\hat{\Omega}^1_B\hten_B\gr_G^0B)/G^N$ for $N \gg 0$. The weaker equivalences of Remarks \ref{weakerequivrmks} combine this condition with $\gr_F^0A \to \gr_G^0B$ being a quasi-isomorphism.

The Artin conditions can be rewritten as saying that 
 $\H^j(\hat{\Omega}^1_{\gr_{\Fil}A}\hten_{ \gr_{\Fil}A} \H^0(\gr_{\Fil}^0))$ should be a 
 projective $\H^0(\gr_{\Fil}^0A)$-module of finite rank  concentrated in filtration weight $j$ for $j>0$, with 
  only finitely many of these  being non-zero, and that it should be concentrated in filtration weight $0$ for $j\le 0$,


 \smallskip
\paragraph{\it Hom constructions.}
For $\Hom$-complexes,  the situation is a little more subtle because for complete exhaustively filtered $A$-modules  $M,N$, the obvious filtration on the complex complex $\HHom_A(M,N)$ is not exhaustive. $\Fil^i\HHom_A(M,N)$ consists of morphisms $f \co M \to N$ satisfying $f(\Fil^jM) \subset \Fil^{i+j}N)$ for all $j$, and we let $\check{\HHom}_A(M,N):= \LLim_{i<0} \Lim_{j>0}( \Fil^i\HHom_A(M,N)/ \Fil^{i+j})$, with the filtration inherited from $\HHom$. When $\Fil^{<0}M=0$, this simplifies to $\Lim_{i<0} \HHom_A(M,N)$. 

In particular, the relevant tangent module,  corresponding to the construction $\C^{\infty}(X) \leadsto  (\hatTot T_{\C^{\infty}(X)}, \Fil_{\Tot})$, is given by $\check{\HHom}_A(\hat{\Omega}^1_A,A)$ with the filtration $\Fil$ as above.

\smallskip
\paragraph{\it Poisson structures.}
One aspect which the filtration perspective does elucidate is the data of shifted Poisson structures and their deformation quantisations. An $n$-shifted Poisson structure on a dg NQ-supermanifold $X$ (or on a generalisation as in Remarks \ref{htpyfdrmkd2}) is given by a $P_{k+1, \infty}$-algebra structure on $\hatTot \sO_X$ for which all of the operations are filtration-bounded below, i.e. lie in  $\check{\HHom}$. That definition and the proofs extend intrinsically to filtered $Q$-manifolds as above. The descriptions of $BD_n$-quantisations are similar, but  with some curvature terms (and twisting by a line bundle for $n=-1$).

\subsection{Alternative model structures}

As we have already seen in the proof of Proposition \ref{QIMtanprop}, there is a projective model structure on the  category $dg_+S\C^{\infty}$ of super dg $\C^{\infty}$-rings $A_{\ge 0}$ concentrated in non-negative chain degrees, in which weak equivalences are quasi-isomorphisms and fibrations are surjective in strictly positive degrees. We saw in that proof that for dg supermanifolds $X$, the cotangent complex can be calculated as  $\Omega^1_{\C^{\infty}(X)}$, even though $\C^{\infty}(X)$ is seldom cofibrant. (Specifically, it is cofibrant only if the supermanifold $X^0$ is a $\C^{\infty}$ retract of $\R^{n|m}$ for some $n,m$.) 

We now introduce two model structures Quillen equivalent to the projective model structure, but with more cofibrant objects, so that the algebras associated to dg manifolds are cofibrant in both. 

The following is the special case of \cite[Definition \ref{DStein-locdef}]{DStein} for $\oT$ the Fermat theory EFC, and $\oE$ the class of  morphisms of finitely presented $\C^{\infty}$-algebras corresponding to open immersions of finitely presented affine $\C^{\infty}$-schemes, as in \cite[Examples \ref{DStein-openetaleexamples}]{DStein}.

\begin{definition}\label{locdef}
 Given a morphism $A \to B$ of finitely presented $\C^{\infty}$-algebras, with associated morphism $f \co Y \to X$ of affine $\C^{\infty}$-schemes, 
 define the localisation $(A/B)^{\loc}$ of $A$ along $B$ to be the $\C^{\infty}$-algebra $\Gamma(Y, f^{-1}\sO_X)$, i.e. $\LLim_U \C^{\infty}(U)$ where $U$ ranges over all open affine $\C^{\infty}$-subschemes of $X$ containing $f(Y)$.

 We then define $(A/B)^{\loc}$ for arbitrary morphisms of $\C^{\infty}$-algebras by passing to filtered colimits.
 \end{definition}

 \begin{definition}\label{locmorphismdef}
  Say that a morphism $A \to B$ of $\C^{\infty}$-algebras is \emph{local} if the canonical map $A \to (A/B)^{\loc}$ is an isomorphism.\footnote{We can characterise this purely in terms of algebraic operations. Since open immersions are generated by the inclusion of an open interval in $\R$, or equivalently by the morphism $\arctan \co \R \to \R$, a morphism $f \co A \to B$ is  local if and only if $\tan(a)$ exists in $A$ whenever $\tan(f(a))$ exists in $B$ (more precisely, there exists $t \in A$ with $\arctan(t)=A$ whenever there exists $s \in B$ with $\arctan(s)=f(a)$).}
 \end{definition}

The following is then \cite[Proposition \ref{DStein-locmodelprop}]{DStein} restricted to this setting. The same proof works for superalgebras, where we can replace $\H_0A$ with $\H_0A/(\H_0A^{\ne})$ because nilpotent extensions are automatically local.
\begin{proposition}\label{locmodelprop}
 There is a cofibrantly generated model structure on the full subcategory $dg_+\C^{\infty}_{\loc} \subset dg_+\C^{\infty}_{\loc}$  consisting of objects  $A$ for which  $A_0 \to \H_0A$ is local, in which weak equivalences are quasi-isomorphisms and
 fibrations are surjective in strictly positive chain degrees.  The inclusion  functor to  $dg_+\C^{\infty}$ with its projective model structure  is  a right Quillen equivalence.
 
 In this model structure, every open submanifold $U \subset \R^n$ gives a cofibrant object $\C^{\infty}(U)$.
\end{proposition}

For the closed immersion $i \co \pi^0X \to X^0$, we then have the following.
\begin{corollary}\label{locmodelcofcor}
 For any dg manifold $X$, the left Quillen equivalence $dg_+\C^{\infty} \to dg_+\C^{\infty}_{\loc}$ sends $\C^{\infty}(X)$ to the cofibrant object $\Gamma(\pi^0X,i^{-1}\sO_X)$.
\end{corollary}
\begin{proof}
The left Quillen equivalence  sends $A$ to $A^l:=A\ten_{A_0}(A_0/\H_0A)^{\loc}$; the description of $\C^{\infty}(X)^l$ follows by substitution. To see that $\C^{\infty}(X)^l$ is cofibrant, observe that since $\C^{\infty}(X)_0 \to \C^{\infty}(X)$ is a cofibration and $(-)^l$ left Quillen, it suffices to show that $\C^{\infty}(X)_0^l$ is cofibrant in $dg_+\C^{\infty}_{\loc}$, but $\C^{\infty}(X)_0^l=\C^{\infty}(X^0)$. Now,  choose a closed embedding $X^0 \into \R^n$, and extend it to an open immersion $NX^0 \to \R^n$ of the normal bundle. By Proposition \ref{locmodelprop}, $\C^{\infty}(NX^0)$ is cofibrant in  $dg_+\C^{\infty}_{\loc}$, but $X^0$ is a retract of $NX^0$, so $\C^{\infty}(X^0)$ must also be cofibrant.
\end{proof}

In the setting of \cite[Proposition \ref{DStein-locmodelprop}]{DStein}, we have the following variant of the local model structure.
Instead of taking a smaller category to increase the number of cofibrations without changing the weak equivalences, we now reduce the number of fibrations instead. 
\begin{proposition}\label{locmodelprop2}
  There is a   model structure  on the category $dg_+\oT$ of non-negatively graded $\oT$-DGAs  in which  weak equivalences are quasi-isomorphisms and
 fibrations $A \to B$ are those projective fibrations (i.e.  surjections in strictly positive chain degrees) $A \to B$ for which  $(\dagger)$ the map $A_0 \to (A_0/(B_0 \by_{\H_0B} \H_0A) )^{\loc_{\oE}}$ is an isomorphism. 
 All $\oE$-morphisms of $\oT$-algebras are cofibrations in this model structure.

 The identity  functor  is then a right Quillen equivalence to the projective model structure, and the functor $ A \mapsto A^{l_{\oE}}$ is a left Quillen equivalence to the model structure of \cite[Proposition \ref{DStein-locmodelprop}]{DStein}.
\end{proposition}
\begin{proof}
Defining cofibrations to be morphisms with the left lifting property (LLP) with respect to trivial fibrations, it suffices to prove the factorisation axioms and that trivial cofibrations have LLP with respect to fibrations.
\begin{enumerate}
\item\label{Ecof}
We begin  by showing that morphisms in $\oE$ are all cofibrations. If $f \co A \to B$ is a trivial fibration, then $\H_0A \cong \H_0B$, so  $(\dagger)$ says that $A_0 \to (A_0/B_0)^{\loc_{\oE}}$ is an isomorphism.  By \cite[Lemma \ref{DStein-UFSlemma}]{DStein}, $f_0$ (and hence $f$) thus has  the unique RLP with respect to all $\oE$-morphisms.

\item\label{TETcof} Now, if $i \co C \to D$ is quasi-isomorphism in $dg_+\oT$, then it has  LLP with respect to a morphism $f \co A \to B$ whenever it has LLP with respect to $A \by B\by_{\H_0B}\H_0A$. If $i$ is moreover 
 a pushout of an $\oE$-morphism (so in particular $D \cong C\ten_{C_0}D_0$), then $i$ has LLP with respect to $f$ whenever  $C_0 \to D_0$ has LLP with respect to $A_0 \to B_0\by_{\H_0B}\H_0A$. If $f$ is a fibration, then  $(\dagger)$ and  \cite[Lemma \ref{DStein-UFSlemma}]{DStein} again imply such a lift. 

\item\label{dgshrink2} The proof of \cite[Proposition \ref{DStein-dgshrink}]{DStein} shows  that if we have $A \in dg_+\oT$ and an ideal $I \subset \delta A_1$ in $A_0$,  then for $D:= A_0/I$ the morphism $A \to A\ten_{A_0}(A_0/D)^{\loc}$ is a quasi-isomorphism.

\item\label{lambdaprops} Given a projective fibration $f \co A \to B$, we can take the factorisation $A \xra{\lambda} \phi(A,B):= A\ten_{A_0}(A_0/(B_0 \by_{\H_0B} \H_0A) )^{\loc_{\oE}} \xra{\mu} B$. Since $f$ is a projective fibration, the morphism $A \to B\by_{\H_0B}\H_0A$ is surjective. Then $\lambda$ is a quasi-isomorphism by (\ref{dgshrink2}) and $\mu$ satisfies $(\dagger)$ by \cite[Lemma \ref{DStein-UFSlemma}]{DStein}, so is a fibration. Moreover, since $\lambda$ is constructed a pushout of $\oE$-morphisms, it is  a cofibration, by (\ref{Ecof}), and has LLP with respect to all fibrations, by (\ref{TETcof}). 

\item\label{factn} Factorisation is now straightforward. Given a morphism $A \to B$ in  $dg_+\oT$, factorise in the projective model structure as a composite $A \xra{\alpha} C \xra{\beta} B$ of  a projective cofibration and a projective fibration, with one of the maps being a quasi-isomorphism. Then the factorisation $A \xra{\lambda \circ \alpha} \phi(C,B) \xra{\mu} B$ suffices because $\alpha$ is a cofibration \emph{a fortiori},  $\lambda$  a trivial cofibration and  $\mu$  a fibration.

\item It remains to show that trivial cofibrations $i \co A \to B$ have LLP with respect to fibrations.  Applying the factorisation procedure of (\ref{factn}) to $i$, the morphism $\lambda$ satisfies LLP by (\ref{lambdaprops}), so $\lambda \circ \alpha$ also does, since   projective trivial cofibrations satisfy LLP  \emph{a fortiori}. Because $i$ is a cofibration and $\mu$ a trivial fibration, we have a section of $\mu$ under $A$. Thus  $i$ is a retract of $\lambda \circ \alpha$, so also satisfies LLP with respect to all fibrations. 
\end{enumerate}
Finally, observe that the fibrations of Proposition \ref{locmodelprop} are all fibrations in this model structure, so the inclusion functor is right Quillen. similarly, the identity functor sends our fibrations to projective fibrations, so is also a right Quillen functor between the respective model structures. These are both Quillen equivalences because weak equivalences are preserved by these functors and their left adjoints.
 \end{proof}

 \begin{corollary}\label{locmodelfpcor}
  There is a model category whose full subcategory of finitely cogenerated fibrant objects is equivalent to the category of dg manifolds, with weak equivalences corresponding to quasi-isomorphisms.  Fibrations between these are precisely the quasi-submersions of Definition \ref{quasisubdef}.
 \end{corollary}
\begin{proof}
 This is just given by the  category $(dg_+\C^{\infty})^{\op}$ equipped with the (opposite of) the model structure of Proposition \ref{locmodelprop2}, with $\oT=\C^{\infty}$ and $\oE$ the class of open immersions. A finitely generated cofibrant object $A$ in that model structure is a retract of a $\C^{\infty}$-DGA finitely freely graded as a graded algebra over $\C^{\infty}(U)$ for some open submanifold $U \subset \R^n$; this is precisely equivalent to saying that $A \cong \C^{\infty}(X)$ for some dg manifold $X$. The characterisation of fibrations follows similarly.
\end{proof}

 \begin{example}\label{pathex}
  Corollary \ref{locmodelfpcor} implies that for each dg manifold $X$, there must exist path objects $PX$ equipped with a quasi-isomorphism $X \to PX$ and a fibration $PX \to X$. Since $X$ is finitely cogenerated as a fibrant object, this can also be chosen to be finitely cogenerated, and hence a dg manifold,  although that constraint cannot be imposed functorially.
  
  For an explicit construction, start with the case where $X$ is a manifold. We can consider the diagonal embedding $X \to X \by X$, and choose an extension of this to an open immersion $j_X \co  NX \into  X \by X$ of the normal bundle. We then take $PX$ to be the derived vanishing locus (Definition \ref{DCritex}) of the diagonal section $NX \to NX\by_XNX$, regarded as a bundle via projection to the first factor. Since this section $y \mapsto (y,y)$ intersects the zero section $y \mapsto (y,0)$ transversely, the inclusion $X \to PX$ is indeed a quasi-isomorphism.  The quasi-submersion $PX \to X \by X$ is the composite map $PX \to NX \xra{j_X} X \by X$.  
  
  The extension of this construction  to arbitrary dg manifolds is then just an algebraic exercise,   by factorising the morphism  $j_{X^0}^{-1}\sO_{X \by X} \to s_{0*} \sO_X$ of sheaves of dgas  on $NX^0$ (for $s_0 \co X^0 \to NX^0$ the zero section) as a quasi-free morphism followed by a quasi-isomorphism by  freely adding boundary generators in positive degrees, shrinking $j_X$ if necessary.
 \end{example}

 \begin{remark}
  Note that the adjunction defining the cotangent complex as in the proof of Proposition \ref{QIMtanprop} also gives a Quillen adjunction for the model structures of Propositions \ref{locmodelprop} and \ref{locmodelprop2}
(cf. \cite[Lemma \ref{DStein-cotlemma}]{DStein}),
 so as in \cite[\S \ref{DStein-cotsn}]{DStein} they can be used directly to show that $\Omega^1_{\C^{\infty}(X)}$ and $\Gamma(\pi^0X, \Omega^1_X)$ model the $\C^{\infty}$ cotangent complex of a dg manifold $X$, since $\C^{\infty}(X)$ and $\Gamma(\pi^0X, i^{-1}\sO_X)$ are cofibrant in the respective model structures.
   \end{remark}

\begin{remark}\label{locmodelNQrmk}
We can also apply   Kan's transfer theorem \cite[Theorem 11.3.2]{hirschhorn} to the forgetful functors from  $DG^+dg_+\C^{\infty}$ and $G^+dg_+\C^{\infty}$ (i.e. forgetting $Q$)  to $dg_+\C^{\infty}\by \prod_{i>0} dg_+\Vect_{\R}$, sending $A$ to $(A^0, A^1,A^2, \ldots)$, where $dg_+\C^{\infty}$ is given the model structure of Proposition \ref{locmodelprop2}, and $dg_+\Vect_{\R}$ the projective model structure. 

The  $G^+dg_+\C^{\infty}$-valued left adjoint sends $(A,V^1,V^2,\ldots)$ to the algebra $A\ten_{\R}\Symm_{\R}(\bigoplus_{i>0} (V^i)^{[-i]}$, while the $DG^+dg_+\C^{\infty}$-valued left adjoint is given by the de Rham algebra of that, i.e.  $\Omega^{\bt}_A\ten_{\R}\Symm_{\R}(\bigoplus_{i>0} (V^i)^{[-i]} \oplus (V^i)^{[-i-1]})$, with $Q$ given by the de Rham differential on $\Omega^{\bt}_A$ and by the identity map from $ (V^i)^{[-i]}$ to $(V^i)^{[-i-1]}) $. 

This gives model structures in which weak equivalences are levelwise $\delta$-quasi-isomorphisms and  fibrations $A \to B$ are surjective in positive chain degrees, with $A^0_0 \to B^0_0\by_{\H_0B^0} \H_0A^0$ being local. 
The cofibrations of $G^+dg_+\C^{\infty}$ are generated by  quasi-submersions as in  Definition \ref{quasisubdef} (with $Q=0$), so in particular any dg N-manifold $X$ (i.e. turning off $Q$) gives a cofibrant object $\C^{\infty}(X)^{\#}_{\bt}$  in this model structure.

The cofibrations in the model structure on $DG^+dg_+\C^{\infty}$ are cofibrations in $G^+dg_+\C^{\infty}$, 
but with the additional restriction of $Q$-acyclicity of the relative cotangent module, which prevents finite generation.
 \end{remark}

\section{Obstruction towers associated to filtered DGLAs}\label{towersn}

Our constructions of spaces of shifted Poisson structures and their quantisations are formulated in terms of Maurer--Cartan functors on pro-nilpotent differential graded Lie algebras. We now summarise the standard obstruction theory of such DGLAs, which underpins most of the proofs in \S\S \ref{poisssn}--\ref{descentsn}. 

\subsection{Obstructions}

The following is well-known (see e.g. \cite[Lemma 3.3]{dmc} for an explicit proof, following \cite[\S 3]{Man}). 
\begin{lemma}\label{obsdgla}
If a surjection $e:L \onto M$ of DGLAs has abelian kernel $K$,  then for any $\omega \in \mc(M)$, the obstruction to lifting $\omega$ to $\mc(L)$ lies in
$
 \H^2(K_{\omega}). 
$
\end{lemma}
Here,  we write $K_{\omega}$ for the cochain complex $(K, d+[\omega,-])$, noting that because $K$ is abelian, the Lie bracket action of $L$ on $K$ descends to $M$. 

In particular, the lemma  implies that if $\H^*K_{\omega}=0$, then $\omega$ must lift to $\mc(L)$, and if $K$ has an $L$-equivariant contracting homotopy (for instance if $K$ is acyclic and $[K,L]=0$), then  the map $\mc(L) \to \mc(M)$ is surjective. 

Applying this to (partial) matching maps immediately gives the following.
\begin{lemma}
 If a surjection $e:L \onto M$ of DGLAs has abelian kernel $K$,  then the map $\mmc(L) \to \mmc(M)$ is a Kan fibration. Moreover, if $K$  has an $M$-equivariant contracting homotopy, then $\mmc(L) \to \mmc(M)$ is a weak equivalence.
 \end{lemma}

 We now have the following standard statement of DGLA obstruction theory, generalising
 \cite[proof of Theorem 3.1, step 3]{Man2}.
 
\begin{proposition}\label{obsDGLA}
Take  surjections $ L \xra{e} M \xra{q} \bar{M}$   of DGLAs with $K = \ker(e)$, such that  $[K,\ker(q)]=0$. There is then a  map $o_e \co \mmc(M) \to \mmc(\bar{M} \oplus K^{[1]})$ in the homotopy category of simplicial sets, and a weak equivalence between $\mmc(L)$ and the homotopy fibre product given by the homotopy limit of the diagram
\[
  \mmc(M) \xra{o_e} \mmc(\bar{M} \oplus K^{[1]}) \xla{(\id,0)}\mmc(\bar{M}).
\]
\end{proposition}
\begin{proof}
Let $\tilde{M}:= \cone(K \to L)$, with Lie bracket $[(\kappa, \lambda), (\kappa', \lambda')]= ([\kappa, \lambda']+[\lambda,\kappa'], [\lambda,\lambda'])$ for $\kappa, \kappa'\in K$, $\lambda, \lambda \in L$.  Observe that $\ker(\tilde{M}\to M)\cong \cone(\id \co K \to K)$ has an $\tilde{M}$-equivariant contracting homotopy, so $\tau\co \co  \mmc(\tilde{M}) \to \mmc(M)$ is a trivial fibration. 

Consider the map $u=(\id, q \circ e) \co \tilde{M} \to \cone(K \to \bar{M}) \cong \bar{M} \oplus K^{[1]}$.  This is surjective, and the target is a DGLA because $[K,\ker(q)]=0$. We have $\ker(u)=K$, which is abelian,  so $u$ induces a Kan fibration $\mmc(\tilde{M}) \to \mmc( \bar{M} \oplus K^{[1]})$. We define $o_e$ to be the composite 
\[
 \mmc(M) \xra{\tau^{-1}} \mmc(\tilde{M}) \xra{u} \mmc( \bar{M} \oplus K^{[1]})
\]
in the homotopy category of simplicial sets

Since $u$ is a Kan fibration, 
\begin{align*}
 \mmc(\tilde{M})\by^h_{u,  \mmc(\bar{M} \oplus K^{[1]}), (\id,0)}\mmc(\bar{M}) &\simeq  \mmc(\tilde{M})\by_{u,  \mmc(\bar{M} \oplus K^{[1]}), (\id,0)}\mmc(\bar{M})\\
 &\cong \mmc( \tilde{M}\by_{u,(\bar{M} \oplus K^{[1]}), (\id,0)} \bar{M})\\
 &\cong \mmc(L),
\end{align*}
as required.
 \end{proof}

 As in \cite[Example 2.37]{2021lect}, a double application of the long exact sequence of  \cite[Lemma I.7.3]{sht} then gives the following.
\begin{corollary}\label{obsDGLAcor}
In the setting of Proposition \ref{obsDGLA}, we have a canonical long exact sequence
\[
 \xymatrix@R=0ex{
\cdots  \ar[r]^-{e_*}&\pi_i(\mmc(M), \omega) \ar[r]^-{o_e}& \H^{2-i}(K_{\omega}) \ar[r] &\pi_{i-1}(\mmc(L), \tilde{\omega})\ar[r]^-{e_*}&\cdots\\ 
\cdots \ar[r]^-{e_*}&\pi_1(\mmc(M), \omega) \ar[r]^-{o_e}& \H^1(K_{\omega})  \ar[r] &\pi_0\mmc(L) \ar[r]^-{e_*}& \pi_0\mmc(M) \ar[r]^-{o_e}& \H^{2}(K_{?})
}
\]
 of homotopy groups and sets, for $\tilde{\omega} \in \mc(L)$ lifting $\omega \in \mc(M)$. 
\end{corollary}
 Explicitly, this means that
 \begin{itemize}
    \item a class $[\omega] \in \pi_0\mmc(M) $ lifts to  a class
    $[\tilde{\omega}] \in \pi_0\mc(L)$ if and only if $o_e([\omega])=0 \in \H^2(K_{\omega})$;
    \item the group  $  \H^1(K_{\omega}) $ acts transitively on the fibre over $[\omega]$;
    \item taking homotopy groups at basepoints $\omega$ and  $\tilde{\omega}$,  the rest of the sequence is a long exact sequence of groups, ending with the stabiliser of $[\tilde{\omega}]$ in $ \H^1(K_{\omega})$.
    \end{itemize}

\subsection{Filtrations and towers}\label{filttowersn}

Given a DGLA $L$, complete  with respect to a filtration $\Fil$ by subcomplexes satisfying $[\Fil^iL,\Fil^jL] \subset \Fil^{i+j}L$, we can iteratively apply Corollary \ref{obsDGLAcor} to the diagrams $L/\Fil^{p+1}L \to L/\Fil^pL \to L/\Fil^1L$. Each of these gives a long exact sequence 
\[
 \ldots \to \H^{1-i}(\gr_{\Fil}^pL_{\omega}) \to \pi_i\mmc(L/\Fil^{p+1}L, \tilde{\omega}) \to \pi_i\mmc(L/\Fil^pL, \omega) \to \H^{2-i}(\gr_{\Fil}^pL_{\omega}) \to \ldots.
\]
 In particular,  vanishing of the cohomology groups  $\H^{2}(\gr_{\Fil}^pL_{\omega})$ guarantees that $\omega \in \pi_0\mc(L/F^1L)$ lifts compatibly to all $\pi_0\mmc(L/\Fil^{p+1}L)$. Such vanishing is the source of many deformation theorems, but those of \S \ref{quantnsn} rely on more involved arguments for the vanishing of obstructions.
 
Following our convention for $\mmc$ of pro-nilpotent DGLAs, we have $\mmc(L)=\Lim_p\mmc(L/\Fil^p)$, and the transition maps are all Kan fibrations, so this limit is a homotopy limit. The Milnor exact sequence  of \cite[Proposition VI.2.15]{sht} thus gives us exact sequences
    \[
    \ast \to {\Lim_p}^1\, \pi_{i+1}\mmc(L/\Fil^p)  \to \pi_i\mmc(L)
    \to \Lim_p \pi_{i}\mmc(L/\Fil^p) \to \ast
    \]
    of groups and pointed sets (basepoints omitted from the notation, but must be compatible).
    
Combining all these data, we have a form of convergent spectral sequence of groups and sets, with first page consisting of the homotopy groups and sets $\pi_i\mmc(L/\Fil^1L)$ and the cohomology groups $\H^{1-i}( \gr_{\Fil}^pL_{\omega})$, converging to $\pi_i\mmc(L)$. As a shorthand, we denote this by
\[
 (\pi_i\mmc(L/\Fil^1L); \H^{1-i}( \gr_{\Fil}^pL_{\omega})~(p \ge 1)) \abuts \pi_i\mmc(L).
\]

In particular,  isomorphisms of cohomology groups $\H^{1-i}( \gr_{\Fil}^pL_{\omega})$ can be used to infer weak equivalences of Maurer--Cartan spaces.

If $L=\Fil^1L$, note that $\mc(L/\Fil^1L)=0$, so all extensions are central and the cohomology groups $\H^{1-i}( \gr_{\Fil}^pL_{\omega})$ are just  $\H^{1-i}( \gr_{\Fil}^pL)$ (since $\omega\in \mc(\gr_{\Fil}^0L)$ must be $0$). That gives the theory described in \cite[\S \ref{poisson-towersn}]{poisson}.

\begin{examples}
The DGLAs of \S\S \ref{poisssn}--\ref{quantnsn} give the following filtered pro-nilpotent DGLAs to which these sequences apply. 
\begin{enumerate}
 \item The pro-nilpotent DGLAs $F^2\widehat{\Pol}(X,n)^{[n+1]}$, with filtration $\Fil^i:=F^{i+1}$ for $i>0$. Since the filtration starts at $\Fil^1$, all extensions are central and we have a convergent spectral sequence
 \[
  \H^{n+2-i}( \Symm_{\cC^{\infty}(X)}^{j+2}(T_{\cC^{\infty}(X)}^{[-n-1]})) \abuts \pi_i\cP(X,n).
 \]
of groups and sets, for $j \ge 0$
 
 \item The pro-nilpotent DGLAs $\tilde{F}^2Q\widehat{\Pol}(X,n)^{[n+1]}$, with filtrations $\Fil^i:=\tilde{F}^{i+1}$ for $i>0$. Again, all extensions are central, and since $\tilde{F}^pQ\widehat{\Pol}(X,n)$ in each of our cases is defined as $\prod_{i \ge p}\hbar^{i-1}F_i\cD$ for some increasingly filtered DGLA $\cD^{[n+1]}$, these give convergent spectral sequences (for $j \ge 0$) 
 \[
 \hbar^{j+1} \H^{n+2-i}(F_{j+2}\cD) \abuts \pi_iQ\cP(X,n).
 \]

 \item Of more interest is the filtration on $\tilde{F}^2Q\widehat{\Pol}(X,n)$ given by $\Fil^0=\tilde{F}^2$, $\Fil^1=\hbar\tilde{F}^1$ and $\Fil^i=\hbar^i\tilde{F}^0$ for $i\ge 2$. This has graded pieces 
 \[
  \gr_{\Fil}^j \cong \prod_{p \ge 2-j} \hbar^{p-1}\gr^F_p\cD \simeq F^{2-j}\widehat{\Pol} (X,n)
 \]
 (where $F^{2-j}=F^0$ for $j \ge 2$)
so gives a convergent spectral sequence 
\[
(\pi_i\cP(X,n), \H^{n+2-i}F^{1}\widehat{\Pol}_{\pi}(X,n) ;\H^{n+2-i}\widehat{\Pol}_{\pi}(X,n)~(j \ge 2)) \abuts \pi_iQ\cP(X,n)
\]
of groups and sets, where $(\phantom{x})_{\pi}$ denotes twisting by the relevant element $\pi \in \pi_0\cP(X,n)$.

 \item For the corresponding filtration on the sub-DGLA of self-dual elements of $\tilde{F}^2Q\widehat{\Pol}(X,n)^{[n+1]}$, the graded pieces $\gr_{\Fil}^j$ are acyclic for $j$ odd and are unaffected for $j$ even, so we have a convergent spectral sequence
 \[
   (\pi_i\cP(X,n),0, \H^{n+2-i}\widehat{\Pol}_{\pi}(X,n),0,\H^{n+2-i}\widehat{\Pol}_{\pi}(X,n),0, \ldots  ) \abuts \pi_iQ\cP(X,n)^{sd}.
 \]
\end{enumerate}

\end{examples}

\bibliographystyle{alphanum}
\bibliography{references.bib}
\end{document}